\newcommand{\verz}[1]{\{ #1 \}}
\newcommand{\tupel}[1]{\langle #1 \rangle}
\newcommand{\bident}[0]{\bigskip\noindent}
\newcommand{\qee} {\hspace*{2mm}\hfill $\shapipe$}
\newcommand{\shapipe}{\,
                    \setlength{\unitlength}{1ex}
                    \begin{picture}(2,2)
                    \put(.25,.15){\line(0,1){1.22}}
                    \put(.25,1.37){\line(1,0){0.61}}
                    \put(.45,-.05){\line(1,0){0.61}}
                    \put(.4,0){\line(1,0){0.61}}
                    \put(.35,.05){\line(1,0){0.61}}
                    \put(.3,.1){\line(1,0){0.61}}
                    \put(.25,.15){\line(1,0){0.61}}
                    \put(1.04,-.05){\line(0,1){1.22}}
                    \put(0.99,0){\line(0,1){1.22}}
                    \put(0.94,.05){\line(0,1){1.22}}
                    \put(0.89,.1){\line(0,1){1.22}}
                    \put(0.84,.15){\line(0,1){1.22}}
                    \end{picture}
                    \!}
\newcommand{\Iff}{\Leftrightarrow}
\newcommand{\To}{\Rightarrow}
\newcommand{\la}{\langle}
\newcommand{\ra}{\rangle}
\newcommand{\s}{{\sf s}}
\newcommand{\nm}[1]{#1}
\newcommand{\sma}{\mathbin{\#}}
\newcommand{\gnum}[1]{{\ulcorner #1 \urcorner}}
\newcommand{\bleq}{\mathbin{\leq}}
\newcommand{\bles}{\mathbin{<}}
\renewcommand{\phi}{\varphi}
\renewcommand{\Theta}{\varTheta}
\renewcommand{\Phi}{\varPhi}
\renewcommand{\Psi}{\varPsi}
\renewcommand{\Xi}{\varXi}
\renewcommand{\Omega}{\varOmega}
\renewcommand{\Gamma}{\varGamma}
\newcommand{\svo}{{\mathfrak a}}
\newcommand{\svt}{{\mathfrak b}}
\newcommand{\svtr}{{\mathfrak c}}
\newcommand{\inty}{interpretability\xspace}
\newcommand{\rat}{reasonable arithmetical theories\xspace}
\theoremstyle{plain}
\newtheorem{theorem}{Theorem}[section]
\newtheorem{lemma}[theorem]{Lemma}
\newtheorem{proposition}[theorem]{Proposition}
\newtheorem{fact}[theorem]{Fact}
\newtheorem{corollary}[theorem]{Corollary}
\theoremstyle{definition}
\newtheorem{definition}[theorem]{Definition}
\newtheorem{ques}[theorem]{Open Question}
\newcommand{\formal}[1]{\ensuremath{{\sf {#1}}\xspace}}
\newcommand{\axioms}[2]{{\ensuremath{{\formal{axioms}}_{#1}(#2)\xspace}}}
\newcommand{\bewijs}[2]{{\ensuremath{{\formal{proof}}_{#1}(#2)\xspace}}}
\newcommand{\principle}[1]{\formal{#1}}
\newcommand{\finprin}[2]{{\ensuremath{\sf{#1^\mathnormal{#2}}}}\xspace}
\newcommand{\prin}[1]{{\ensuremath{\sf{#1}}}\xspace}
\newcommand{\sonetwo}{{\ensuremath{{{\sf S}^1_2}}}\xspace}
\newcommand{\pa}{\ensuremath{{\sf PA}}\xspace}
\newcommand{\gl}{{\sf GL}\xspace}
\newcommand{\expo}{{\ensuremath{{\sf exp}}}\xspace}
\newcommand{\eqbydef}{:=}
\newcommand{\finboxext}[1]{\principle{\translated{(E\Box)}{#1}}\xspace}
\newcommand{\finintext}[1]{\principle{\translated{(E\rhd)}{#1}}\xspace}
\newcommand{\finimpl}[1]{\principle{\translated{(\rightarrow \Box)}{\mathnormal{#1}}}\xspace}
\newcommand{\fint}[3]{{\ensuremath{{#1}^{[{#2},{#3}]}}}\xspace}
\newcommand{\finmod}[2]{\,{\ensuremath{{#1}^{#2}}}\xspace}
\newcommand{\finrhdimpl}[1]{\principle{(\rightarrow\rhd)^\mathnormal{#1}}\xspace}
\newcommand{\il}{{\ensuremath{\textup{\textsf{IL}}}}\xspace}
\newcommand{\extil}[1]{\ensuremath{\sf IL#1}\xspace}
\newcommand{\ilm}{\extil{M}}
\newcommand{\ilp}{\extil{P}}
\newcommand{\ilpnaught}{\extil{P_0}}
\newcommand{\intl}[1]{{\ensuremath {\textup{\textbf{IL}}}({\rm #1})}}
\newcommand{\ilal}{\intl{{\sf All}}}
\newcommand{\translated}[2]{{\ensuremath{{#1}^{#2}}\xspace}}
\newcommand{\trans}[2]{{\ensuremath{{#1}^{#2}}\xspace}}
\newcommand{\pvu}{A}
\newcommand{\avu}{\varphi}
\newcommand{\avd}{\psi}
\newcommand{\avt}{\chi}
\newcommand{\avq}{\alpha}
\newcommand{\avqq}{\beta}
\newcommand{\avs}{\sigma}
\renewcommand{\qedsymbol}{$\dashv$}
\newcommand{\luka}{\textcolor{red}{\textbf{LUKA:}}}
\newcommand{\atl}{{\sf FIL}\xspace}
\newcommand{\appr}[2]{ \la{#1}, {#2}\ra }
\newcommand{\ignore}[1]{ }
\newcommand{\forlater}[1]{ }
\definecolor{uuxgreen}{cmyk}{1,0,0.75,0}
\definecolor{uuxred}{cmyk}{0.2,1,0.9,0.1}
\definecolor{uuyblue}  {cmyk}{0.9,0.55,0,0}
\definecolor{uuxblue}  {cmyk}{0.9,0.55,0,0}
\definecolor{grre}{rgb}{0.7,0.5,0.5}
\definecolor{grbl}{rgb}{0.4,0.5,0.7}
\definecolor{lightgray}{rgb}{0.75, 0.75, 0.75}
\newcommand{\albnote}[1]{\marginpar{\footnotesize \textcolor{uuyblue}{#1}}}
\newcommand{\luknote}[1]{\marginpar{\footnotesize \textcolor{uuxred}{#1}}}
\newcommand{\joonote}[1]{\marginpar{\footnotesize \textcolor{uuxgreen}{#1}}}
\newcommand{\bnb}[1]{#1}
\title{Feferman Interpretability} 
\author{Joost J. Joosten, Luka Mikec, and Albert Visser}
\begin{document}

\maketitle

\abstract{\noindent
We present an interpretability logic \atl or \emph{Feferman Interpretability Logic}. The $\Box$ provability modality 
can occur in \atl with a label, as in $\Box^\svo$. Likewise the $\rhd$ interpretability modality can occur in \atl with a label, as in $\rhd^\svo$. 
The labels indicate that in the arithmetical interpretation, the axiomatisation of the base theory will be tweaked/customised. 
\forlater{\joonote{I think we need another word for approximation. Intensionalising the axiom set?}\albnote{The axiom set is already intensional. Customising is a nice word. At some point we use tweaking which I like.}}
The base theory $T$ will always contain the minimum of \sonetwo  of arithmetic and $T$ will be 
approximated by $T^a$ in such a way that $T$ is extensionally the same as $T^a$. However, 
$T^a$ will inherit certain properties reminiscent of finitely axiomatised theories. 

After providing the logic \atl and proving the arithmetical soundness,
we set the logic to work to prove various interpretability principles to be sound in a large variety of 
(weak) arithmetical theories. In particular, we prove the two series of principles 
from \cite{GorisJoosten:2020:TwoSeries} to be arithmetically sound using \atl. 
Up to date, the arithmetical soundness of these series had only been proven using the techniques of definable cuts.

\section{Preludium}
Interpretability Logic is an approach to the study of interpretability. Unlike the study of
interpretability degrees and categories of theories and interpretations, the distinctive feature
of interpretability logic is the internalisation and nesting of interpretability viewed
as a modal connective. For example, interpretability logic allows us to study what
the \emph{internal} verification of the model existence lemma means in formal theories
(Principle ${\sf J}_5$; see Section~\ref{lekkerbeksmurf}).

In the case of classical theories, for the primary reading of the
modal connectives, there is a marked difference between provability logic and
interpretability logic. Where provability logic is remarkable stable: no arithmetical
theories with significantly different provability logics have been discovered,
substantially different interpretability logics are realised in different (classes of) theories.

Interpretability Logic turns out to be
a land of two streams. Its Euphrates is the logic {\sf ILM}
and its Tigris the logic {\sf ILP}. The logic {\sf ILM} is the logic of \emph{essentially reflexive sequential theories}, alternatively characterised as
sequential theories with full induction w.r.t. a designated interpretation of a theory of the natural numbers.\footnote{We think that
the class of theories realising {\sf ILM} can be extended to a class of \emph{essentially sententially reflexive sequential theories}. We probably
need our arithmetical base to be $\mathrm I\Sigma_1$. However, this possibility has not been studied.
See \cite{bekl:limi05} for some relevant results.} The theory consists of 
the base logic {\sf IL}, given in Section~\ref{lekkerbeksmurf} plus the principle {\sf M}:
$A\rhd B \to (A\wedge \Box C)\rhd (B\wedge\Box C)$. The logic {\sf ILP} is the interpretability logic of finitely axiomatised theories that interpret
${\sf EA}^+$, a.k.a. $\mathrm I\Delta_0+{\sf supexp}$. The logic is given by {\sf IL} plus the principle {\sf P}: $A\rhd B \to \Box(A\rhd B)$.

Both logics we introduced around 1987 by Albert Visser. A modal semantics for the theories was
discovered soon after by Frank Veltman. See, e.g., \cite{JonghVeltman:1990:ProvabilityLogicsForRelativeInterpretability} and \cite{JonghVeltman:1999:ILW}.
The arithmetical completeness of {\sf ILM} was proved by Alessandro Berarducci in \cite{Berarducci:1990:InterpretabilityLogicPA} and by Volodya Shavrukov
in \cite{Shavrukov:1988:InterpretabilityLogicPA}. The arithmetical completeness of {\sf ILP} was proved by Albert Visser in his paper \cite{Visser:1990:InterpretabilityLogic}.
For more information, see e.g. \cite{Visser:1997:OverviewIL}, \cite{Japaridze:1998:HandbookPaper}, \cite{arte:prov04}.

But what happens if we distance ourselves from the rivers? There is a scarcity of results for specific theories. 
We do have a Kripke model characterisation
of the interpretability logic of {\sf EA}, aka $\mathrm I\Delta_0+{\sf exp}$, however, we do not have an axiomatisation. See \cite{Kalsbeek:1991:TowardsExp}.
Another case is Primitive Recursive Arithmetic. This is a theory that is neither finitely axiomatisable 
nor essentially reflexive. Some modest results have been obtained towards its interpretability logic but the full characterisation is still open
(\cite{JoostenIcard:2012:RestrictedSubstitutions, BilkovaJonghJoosten:2009:PRA}).

The most salient question is: what is the interpretability logic of all reasonable theories? This is a koan-like question since \emph{what is reasonable?} is
part of the question. A preliminary study was done in \cite{JoostenVisser:2000:IntLogicAll}.
See also \cite{GorisJoosten:2020:TwoSeries} where a list of principles is given and verified.
The principles valid in all reasonable theories will certainly be in the intersection of {\sf ILM} and {\sf ILP}.
An example of such a principle is {\sf W}: $A \rhd B \to A \rhd (B\wedge \Box \neg A)$. This principle has both
an {\sf ILM}- and an {\sf ILP}-proof. Interestingly, we can generalise the {\sf ILM}-proof to a wide class of theories,
to wit sequential theories where the interpretation of the theory of numbers satisfies \sonetwo. The basic idea
here is that we can view the {\sf ILM}-proof as using the insight that, for all models $\mathcal M$ of our
sequential essentially reflexive theory $T$, any internal model is an end-extension of the $\mathcal M$-internal $T$-models.
This insight has a trace in all sequential theories (as discovered by Pavel Pudl\'ak), 
to wit that $\mathcal M$ and its internal model $\mathcal N$ share a definable cut (modulo internally definable isomorphism).

We can also generalise the {\sf ILP}-proof. To do that we use a trick due to Feferman (\cite{feferman1960arithmetization})
to make a theory behave as if it were finitely axiomatised by modifying the representation of the axiom set. 
The {\sf P}-style proof of {\sf W} has even wider scope: it holds
for all theories (with decent axiom sets) that interpret \sonetwo.
In analogy to {\sf W}, many other principles can be given {\sf M}-style proofs and {\sf P}-style proofs with wider scope.

The aim of the present paper is to systematically study the {\sf P}-style methodology and Feferman's trick.
We do this by developing a modal logic that is specifically built to implement this methodology.
Our present paper is a genuine extension of an earlier paper by Joost Joosten and Albert Visser, to wit \cite{JoostenVisser:2004:Toolkit}.

\forlater{
\medskip
\textcolor{uuxblue}{\cite{JonghVeltman:1990:ProvabilityLogicsForRelativeInterpretability}.
\cite{dejo:expl91},  \cite{arec:inte98}, \cite{JonghVeltman:1999:ILW}, \cite{Japaridze:1998:HandbookPaper}. 
\cite{JonghJumeletMontagna:1991}
}\albnote{Do check whether we want to slip more references in.}
}

\forlater{\section{Contents of the paper}

\textcolor{uuxblue}{The paper contains something for everyone. You will be go in a scenic tour through
fascinating landscapes of the mind; see things that can't be unremembered. This thrilling experience
will be with you your whole life.}

 This logic is a sublogic both of {\ilm} and
{\ilp}, but it is not the intersection of {\ilm} and {\ilp}.
(See \cite{Visser:1997:OverviewIL}.)
}

\section{Preliminaries}

In this section we revisit the basic definitions and results needed in the rest of the paper; 
definitions and results from arithmetic, formalised metamathematics and modal interpretability logics. 
NB: I only used the macros where the wrong variable was used and I did this only for Sections 1--4.}

\subsection{Arithmetic}

In this paper we will be using reasoning in and over weak arithmetics. 
To this end, let us start by describing the theory \sonetwo,
    introduced by Buss in \cite{Buss:1986:BoundedArithmetic}.
This is a finitely axiomatisable and weak first-order theory of arithmetic.
The signature of \sonetwo{} is \[
    (0,\, \s,\, | \cdot |,\, \lfloor \frac 1 2 \cdot \rfloor,\, +,\, \times,\, \sma,\, =,\, \leq).
\]
The intended interpretation of $|\cdot|$ is the length of its argument when expressed
    in the binary number system.
In other words, $|n|$ is (in the intended interpretation)
    equal to $\lceil \log_2 (n + 1) \rceil $.
The intended interpretation of $\lfloor \frac 1 2 \cdot \rfloor$
    is precisely the one suggested by the notation: dividing the argument by two and rounding the result downward. 
The symbol \# is pronounced `smash' and has the following intended interpretation
    (``the smash function''): \[
    n \sma m = 2^{ |n| |m| }.
\]
The remaining symbols are to be interpreted in the expected way.

The motivation for the smash function is that it gives an upper
    bound to Gödel numbers of formulas obtained by substitution:
Suppose $\avu$ is a formula, $x$ a variable and $t$ a term.
Given the Gödel numbers of $\avu$ and $t$ 
    (denoted with $\gnum \avu $ and $\gnum t $, as usual), 
    the Gödel number of $\avu(x \mapsto t)$ will not surpass 
    $\gnum \avu \sma \gnum t$. Of course, we need a `natural' G\"odel numbering to make this happen. See below.

Here and in the remainder of this paper, the assumption is that both the numeral representation and
    the Gödel numbers we work with are efficient.
For example, we can take the Gödel number of a string of symbols 
    to be its ordinal number in an arbitrary computationally very easy
    but otherwise fixed enumeration of all strings in the language of \sonetwo.
As for the numerals,         
    we 
    use \emph{efficient numerals}, defined recursively as follows: 
    \begin{align*}
        \underline{0} &\mapsto 0;\\
        \underline{2n + 1} &\mapsto \s(\s\s 0 \times \underline{n}); \\
        \underline{2n+2} &\mapsto \s\s(\s\s 0 \times \underline{n}).
    \end{align*}
Clearly, efficient numerals have about 
    the same growth rate as the corresponding binary representations.    
    
We also require that the code of a subterm is 
    always smaller than the entire term, and, similarly, for formulas.
We will consider such codings to be \emph{natural}.
See \cite{Buss1998} for details.

An example of such a natural coding is the Smullyan coding where
we code a string of letters (in a given alphabet of prime cardinality) as its number in the
length-first ordering.

Before introducing (some of) the axioms of \sonetwo,
    we will first define a certain hierarchy of formulas in the language of \sonetwo.
We will say that a quantifier is \emph{bounded}
    if it is of the form $(Q x \bleq t)$ 
    where $t$ is a term\footnote{
By ``$(Q x \bleq t)$'' we mean ``$(\exists x)(x \leq t \wedge \dots)$'',
    if $Q$ is $\exists$, and 
    $(\forall x)(x \leq t \to \dots)$'' if $Q$ is $\forall$.}
    that does not involve $x$.

A quantifier is \emph{sharply bounded} if it is of the form $(Q x \bleq |t|)$ 
    where $t$ is a term that does not involve $x$
    
\begin{definition}[\cite{Buss1998}]
    Let $\Delta_0^{\sf b}$, $\Sigma_0^{\sf b}$, and $\Pi_0^{\sf b}$ stand 
        for the set of formulas all of whose quantifiers are sharply bounded.
    We define $\Delta_i^{\sf b}$, $\Sigma_i^{\sf b}$, and $\Pi_i^{\sf b}$ for $i > 0$ 
        as the minimal sets satisfying the following conditions:
    \begin{enumerate}
        \item If $\avu$ and $\avd$ are $\Sigma_i^{\sf b}$-formulas, then 
            $(\avu \wedge \avd)$ and $(\avu \vee \avd)$ are $\Sigma_i^{\sf b}$-formulas.
        \item If $\avu$ is a $\Pi_i^{\sf b}$-formula and $\avd$ is a $\Sigma_i^{\sf b}$-formula, 
            then $\neg\, \avu$ and $(\avu \to \avd)$ are $\Sigma_i^{\sf b}$-formulas.
        \item If $\avu$ is a $\Pi_{i-1}^{\sf b}$-formula, then $\avu$ is a $\Sigma_i^{\sf b}$-formula.
        \item If $\avu$ is a $\Sigma_i^{\sf b}$-formula, $x$ a variable and $t$ is a term 
            not involving $x$, then $(\forall x \bleq |t|)\,\avu$ is a $\Sigma_i^{\sf b}$-formula.
        \item If $\avu$ is a $\Sigma_i^{\sf b}$-formula, $x$ a variable and $t$ is a term 
            not involving $x$, then $(\exists x \bleq t)\,\avu$ and $(\exists x \bleq |t|)\,\avu$ 
            are $\Sigma_i^{\sf b}$-formulas. 
        \item The first five conditions are to be repeated in the dual form:
            with the roles of $\Sigma$ and $\Pi$, and $\exists$ and $\forall$, swapped in all places.
        \item A formula $\avu$ is a $\Delta_i^{\sf b}$-formula if it is equivalent over predicate logic both to a
            $\Sigma_i^{\sf b}$-formula and to a $\Pi_i^{\sf b}$-formula.
    \end{enumerate}
\end{definition}   
Thus, this hierarchy is analogous to the standard arithmetical hierarchy,
    with bounded quantifiers in the role of unbounded quantifiers,
    and sharply bounded quantifiers in the role of bounded quantifiers.

\begin{definition}[The polynomial induction schema \cite{Buss1998}]
    Let $\Phi$ be a set of formulas which may contain zero or more free variables. 
    We define $\Phi$-PIND axioms to be the formulas \[
        \avu(x := 0) \wedge (\forall x)\, \big(\avu(x := \lfloor \frac 1 2 x \rfloor) \to \avu \big) \to (\forall x) \avu,
    \] 
    for all $\avu \in \Phi$ and all variables $x$.
\end{definition}
Thus, when proving facts using the schema of polynomial induction, 
    in the inductive step we are allowed to refer to the property
    obtained for $\lfloor \frac 1 2 n \rfloor$.
This is, of course, faster than the standard schema of mathematical induction where we     
    can use the property obtained for $n - 1$. The price we pay is a stronger antecedent in the induction principle.

We obtain \sonetwo by extending a certain list of 32 quantifier-free formulas 
    (dubbed \textsf{BASIC}, see e.g.\ \cite{Buss1998})
    with all $\Sigma_1^{\sf b}$-PIND axioms.
    
This somewhat unusually axiomatised theory has a nice connection
    to computational complexity, as the next theorem shows.
\begin{theorem}[\cite{Buss:1986:BoundedArithmetic}]
    \label{thm:buss-polytime}
    We have the following.
    \begin{itemize}
        \item     Suppose $\sonetwo \vdash (\forall x)(\exists y)\,\avu(x, y)$ for some $\Sigma_1^{\sf b}$-formula $\avu$.
            Then there is a \textsf{PTIME}-computable function $f_\avu$ such that
            if $f_\avu(x) = y$ then $\avu(x, y)$ holds \textup($f_\avu$ is a \emph{witnessing function} for $\avu$\textup),
            and  $\sonetwo \vdash (\forall x)\,\avu(x, f_\avu(x))$.
        \item Conversely, suppose $f$ is a \textsf{PTIME}-computable function.
            Then there is a $\Sigma_1^{\sf b}$-formula $\avu_f$ such that 
                $\avu_f(x, y)$ holds if and only if $f(x) = y$, and
                $\sonetwo \vdash (\forall x)(\exists y)\,\avu_f(x, y)$.
    \end{itemize}
\end{theorem}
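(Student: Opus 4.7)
The plan is to prove the two directions separately, treating the second (from PTIME functions to $\Sigma_1^{\sf b}$-definability) as the easier one and the witnessing direction as the substantial core.

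For the converse direction, I would invoke Cobham's characterisation of PTIME as the smallest class of functions containing certain basic functions (zero, binary successors, projections, some bit-manipulation and conditional functions) and closed under composition and bounded recursion on notation. For each scheme, I would exhibit a $\Sigma_1^{\sf b}$-formula $\avu_f$ defining the graph of $f$ and verify that $\sonetwo$ proves totality, $\sonetwo \vdash (\forall x)(\exists y)\,\avu_f(x, y)$. Composition and the base functions are straightforward; the only non-trivial scheme is bounded recursion on notation. Here one uses a $\Sigma_1^{\sf b}$-definable sequence coding inside $\sonetwo$ (available thanks to the smash function, which provides exactly the polynomial length bounds needed to code computation histories) and carries out $\Sigma_1^{\sf b}$-PIND on the recursion argument. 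Because the recursion depth is the length of the input, polynomial induction is the correct principle to use.

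For the witnessing direction, the plan is proof-theoretic. First I would reformulate $\sonetwo$ in a Gentzen-style sequent calculus with PIND as a rule, and then invoke a partial cut-elimination theorem stating that every $\sonetwo$-proof of a $\Sigma_1^{\sf b}$-sequent can be transformed into a \emph{free-cut-free} proof in which all cut-formulas appearing in the proof are themselves $\Sigma_1^{\sf b}$. Then, by induction on the structure of such a proof, I would assign to every sequent $\Gamma \Rightarrow \Delta$ a tuple of PTIME-computable witnessing functions: given witnesses for the $\Sigma_1^{\sf b}$-formulas in $\Gamma$, these functions produce a witness for some true formula in $\Delta$. One then verifies soundness of the assignment rule by rule: propositional rules amount to choosing between subwitnesses, bounded quantifier rules use the given bound to turn existential witnesses into computable search or projection, and cut composes the two witnessing functions associated with the two premises (here the $\Sigma_1^{\sf b}$ restriction on cut-formulas is essential for staying in PTIME).

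The main obstacle, and the step I would flag as the most delicate, is the PIND rule. Given a witness $g$ for the inductive step $\avu(\lfloor x/2 \rfloor) \to \avu(x)$, the witnessing function for the conclusion $\avu(x)$ must iterate $g$ approximately $|x|$ times starting from a witness for $\avu(0)$. This iteration yields only polynomially many applications of the step-witness, so the overall composition remains PTIME \emph{provided} the intermediate witness sizes stay polynomially bounded in $|x|$. Establishing this length bound requires a careful bookkeeping argument: one introduces a polynomial size-bound invariant on all witnessing functions and shows inductively on the proof that such a bound always exists and is itself polynomial-time computable. This is exactly where the polynomial (halving) nature of the induction, as opposed to ordinary decrementing induction, is indispensable; it is also the feature that makes $\sonetwo$ — rather than $\isig{1}$ — the characteristic theory of PTIME.
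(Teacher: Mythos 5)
The paper does not prove this theorem; it is imported verbatim from Buss's work, and your sketch reconstructs exactly the standard argument behind that citation: Cobham's characterisation plus $\Sigma_1^{\sf b}$-PIND on the recursion-on-notation scheme for the converse, and free-cut elimination followed by a witnessing induction (with the PIND rule handled by iterating the step-witness $|x|$ times, the bounded quantifiers supplying the polynomial size bounds on intermediate witnesses) for the forward direction. Your outline is correct and identifies the genuinely delicate points, so there is nothing to add beyond noting that the full converse as usually needed also requires provable uniqueness of the value (i.e.\ $\Delta_1^{\sf b}$-definability of the graph), which your Cobham-scheme induction delivers but which you should state explicitly.
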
    
Theories in this paper will be $\Delta^{\sf b}_1$-axiomatised
    theories (i.e.\ having \textsf{PTIME}-decidable axiomatisations). Moreover, we will always assume that any theory we consider comes with a designated interpretation of ${\sf S}^1_2$. 

That is, when we say ``a theory'', we mean a pair of an actual theory
    together with some singled-out and fixed interpretation of ${\sf S}^1_2$. 

    
A principle similar to induction is that of \emph{collection},
    in particular $\Sigma_1$-collection.
\begin{definition}[$\Sigma_1$-collection]
    The schema \[
        (\forall n)\Big((\forall x \bles n)(\exists y) \avu(x, y) \to
        (\exists m)(\forall x \bles n)(\exists y \bles m) \avu(x, y)\Big)
    \]
    where $\avu$ is restricted to $\Sigma_1$-formulas possibly with parameters, is the $\Sigma_1$-collection schema.
\end{definition}    
Collection is occasionally useful, however we will have to find 
    ways to avoid it as it is not available in \sonetwo.

    \subsection{Interpretability}\label{voetbalsmurf}

    We refer the reader to \cite{Visser:1991:FormalizationOfInterpretability} or \cite{visser:2018:InterpretationExistence} 
    for the definitions of translation and interpretation.
    
    There is one point specific to this paper. We want to treat a translation $k$ as an interpretation, in a given theory $T$, of an unspecified target theory
    in a given target signature $\Theta$.
    To fulfill this role, $T$ needs to prove at least the $k$-translations of the axioms of identity for signature $\Theta$.
    However, generally, this may fail.
     The reason is that, even if identity as a logical connective, we treat it in translation simply as
    a symbol from the signature. In other words, we translate identity not necessarily to identity. Also, we need
    the guarantee that the domain is non-empty to satisfy the axiom $\exists x\, x=x$.
    In fact, the usual treatment of interpretations fits free logic without identity best.
    
    We  consider only finite signatures, so the theory
    of identity for signature $\Theta$ will be given by a single axiom, say $\mathfrak{id}_\Theta$. 
    Thus, what we need for the translation $k$ to carry an interpretation at all is that $T \vdash \mathfrak{id}^k_\Theta$. 

    We implement a simple hack to ensure that every translation carries an interpretation to some theory. 
    We fix a default translation $m$ that interprets $\mathfrak{id}_\Theta$ in $T$. We can take as the domain of $m$
     the full domain of $T$ and translate identity to identity. The translation of the predicate symbols can be
    arbitrarily chosen. We can now replace $k$ by the disjunctive interpretation $k^\ast$ that is $k$ in case $\mathfrak{id}^k_\Theta$
    and $m$ otherwise. Clearly, we will always have $T\vdash \mathfrak{id}^{k^\ast}_\Theta$. Moreover, if
    $T \vdash \mathfrak{id}^k_\Theta$, then $k$ and $k^\ast$ coincide modulo $T$-provable equivalence.
    The idea is now simply that the translation we quantify over are really the $k^\ast$, so that they always carry
    some interpretation.

    We note that, in the context of interpretability logics, we are interested in translations from a signature $\Theta$ to itself.
    In that context, we can take as the default translation $m$ simply the identity translation on $\Theta$.

    In Section~\ref{brutalesmurf}, we will strengthen our demand on translations somewhat to ensure that we do have coding in all theories
    we consider.

\subsection{Formalised provability and interpretability}

Before introducing formalised interpretability, 
    let us say a few words on formalised provability.
    For a given signature, we fix a natural formalisation ${\sf aproof}(p,x)$ of proof from assumptions.
    We usually leave the signature implicit. We assume that a proof from assumptions is given, Hilbert-style,
    as a sequence of pairs of a number giving the status of the inference step and a formula.\footnote{Of course,
    we do not really need the Hilbert format. However, the definition would be somewhat more
    complicated for, say, Natural Deduction.} Say $\mathfrak a$ tells us that the formula is
    an assumption. We can make {\sf aproof} a $\Delta_1^{\sf b}$-predicate.
    A theory $T$ comes equipped with a representation $\alpha$ of its axiom set. We will write ${\sf axioms}_T$ for $\alpha$.
    The default is that $\alpha$ is $\Delta_1^{\sf b}$. We write:
    \begin{itemize}
        \item 
        ${\sf proof}_T(p,x)$ for:\\
        \hspace*{1cm}${\sf aproof}(p,x) \wedge (\forall i \bles {\sf length}(p))\, ((p)_{i0} = \mathfrak a \to {\sf axioms}_T((p)_{i1}))$.
        \item 
        ${\sf Pr}_T(x)$ for $\exists p\; {\sf proof}_T(p,x)$
    \end{itemize}
    
   \noindent We note that, if $\alpha$ is $\Delta_1^{\sf b}$, then so is ${\sf prf}_T(p,x)$.

Let us denote the efficient numeral of the (natural) Gödel number of $A$
    by $\gnum A$.
Sufficiently strong theories (such as \sonetwo 
)
    prove the \textit{Hilbert--Bernays--Löb derivability conditions} (\cite{hilbert2013grundlagen}):
\begin{enumerate}
    \item for all $\avu$, if $T \vdash \avu$, then $T \vdash \mathsf{Pr}_T(\gnum \avu)$;
    \item for all $\avu$, $T \vdash \mathsf{Pr}_T(\gnum {\avu \to \avd}) \to (\mathsf{Pr}_T(\gnum \avu) \to \mathsf{Pr}_T(\gnum \avd))$;
        \item for all $\avu$,
        $T \vdash \mathsf{Pr}_T(\gnum \avu) \to \mathsf{Pr}_T(\gnum{\mathsf{Pr}_T(\gnum \avu)})$.
\end{enumerate}
These conditions, in combination with the Fixed Point Lemma, suffice to show that $T \vdash \mathsf{Pr}_T(\gnum {0=1})$
    and, consequently, $T \vdash 0 = 1$
    follows from $T \vdash \neg\, \mathsf{Pr}_T(\gnum {0=1})$,
    i.e.\ Gödel's second incompleteness theorem.
These conditions also suffice to show that the following holds: \[
    \text{if}\ T \vdash  \mathsf{Pr}_T(\gnum \avu) \to \avu, \ \text{then}\ T \vdash  \avu.
\]
Thus $T$ is only ``aware'' that $\mathsf{Pr}_T(\gnum \avu)$ implies $\avu$
    in case the conditional is trivially satisfied by the provability
    of its consequent.
This entailment is known as \emph{Löb's rule}.
In fact, $T$ is ``aware'' of this limitation (\emph{formalised Löb's rule}):
\[
    T \vdash \mathsf{Pr}_T(\gnum{\mathsf{Pr}_T(\gnum \avu) \to \avu}) \to \mathsf{Pr}_T(\gnum \avu).
\]
We can read e.g. the formulised L\"ob's rule as a propositional scheme by replacing  $\mathsf{Pr}_T$ with $\Box$ and
the variable $\avu$ that ranges over $T$-formulas by the variable $\pvu$ that rangesover propositional modal formulas.
The provability logic \gl is the extension of the basic modal logic \textsf{K}
    with an additional axiom schema representing Löb's formalised rule: \[
        \Box(\Box A \to A) \to A.
    \]
In his well-known result, 
    Solovay \cite{Solovay:1976} established arithmetical completeness for this logic. Upon inspection, this result
    works for all
    c.e. extensions of {\sf EA}, a.k.a. $\mathrm I\Delta_0+{\sf exp}$, that are $\Sigma_1$-sound.\footnote{In a wide
    range of cases, we can, given the theory, redefine the representation of the axiom set in such a way that one can drop the
    demand of $\Sigma_1$-soundness. See, e.g., \cite{visser:2021:absorption}.}

The predicate $\mathsf{Pr}_T$ satisfies the following property, which is known as the Kreisel Condition,
for $\exists\Sigma^{\sf b}_1$-sound theories: \begin{equation}
    \label{eq:provpred}
    T \vdash \avu \text{ if and only if }    \mathbb N \models \mathsf{Pr}_T(\gnum \avu).
\end{equation}
    
We can find alternative axiomatisations of $\mathsf{Pr}_T$,
    that satisfy Property \eqref{eq:provpred}, but behave differently w.r.t. consistency. 
One such axiomatisation is given in \cite{feferman1960arithmetization}.
Say the original axiomatisation is $\alpha$. We write $\alpha_x(y)$ for $\alpha(y) \wedge y\leq x$.
Let the theory axiomatised by $\alpha_x$ be $T_x$.
    We take: $\digamma(x)$ iff $\alpha(x) \wedge {\sf Con}(T_x)$.

    We note that we diverge from our default here: $\digamma$ is $\Pi_1$.
    We take $T^{\mathsmaller \digamma}$ to be the theory axiomatised by $\digamma$.
    We need that the theory $T$ is
    $\Sigma_1$-sound and reflexive to make \eqref{eq:provpred} work for ${\sf Pr}_{T^{\digamma}}$.
    
Let us call this notion \emph{Feferman-provability}.
As we are interested only in consistent theories,
    clearly this predicate has the same extension as the predicate $\mathsf{Pr}_T$.   
However, it is provable within \pa that $0 = 1$
    is not Feferman-provable. 
This is, of course, not the case with $\mathsf{Pr}_{\mathsmaller\pa}$,
    as that would contradict Gödel's second incompleteness theorem.    
    
If we are dealing with a theory $T$ with a poly-time decidable axiom set,
    by Theorem \ref{thm:buss-polytime}, there is a $\Sigma^{\sf b}_1$-predicate (actually $\Delta^{\sf b}_1$)
    verifying whether a number codes a $T$-proof of a formula.
This implies that the provability predicate, claiming that a proof exists
    for some given formula, is a $\exists \Sigma^{\sf b}_1$-predicate.
This is convenient because for \sonetwo{} we have provable 
     $\exists \Sigma^{\sf b}_1$-completeness: 
     \begin{theorem}
         For any $\exists \Sigma^{\sf b}_1$-formula $\avs$ we have 
         \[
         \sonetwo{} \vdash \avs \to \Box_T \avs.
         \]
     \end{theorem}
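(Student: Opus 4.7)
The plan is to prove this by external induction on the build-up of $\avs$, at each step exhibiting a polynomial-time computable function that sends any truth-data for $\avs$ to a $T$-proof of $\avs$, and then invoking Theorem~\ref{thm:buss-polytime} to conclude that \sonetwo{} proves the totality and correctness of this function. Since $T$ extends \sonetwo, the constructed proofs qualify as $T$-proofs, so the output will satisfy ${\sf proof}_T(p,\gnum\avs)$, giving $\Box_T\avs$. If $\avs$ has free variables $\vec x$, the statement is read schematically as $\avs(\vec x)\to \Box_T\avs(\underline{\vec x})$, with the witnessing function taking $\vec x$ as input and returning the proof.

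First I would handle atomic $\Sigma^{\sf b}_0$-formulas: given values for the free variables, build a $T$-proof of the closed instance by mimicking the recursive evaluation of the terms against the \textsf{BASIC} axioms. Efficient numerals and the naturality of the coding ensure the resulting proof has size polynomial in the input. Boolean connectives are handled by concatenating subproofs. For a sharply bounded quantifier $(\forall x\bleq|t|)\,\avu(x)$, iterate $x = 0,1,\dots,|t|$ and conjoin the individual $T$-proofs of $\avu(\underline{x})$; since there are only $|t|$ many and each is polynomial, the total stays polynomial. For a bounded existential $(\exists x\bleq t)\,\avu(x)$ extract a witness $w$ from the truth-data, recursively build a $T$-proof of $\avu(\underline w)$, and apply $\exists$-introduction. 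The outer existential in the $\exists\Sigma^{\sf b}_1$-prefix is handled identically, now without a bound on the witness (which is fine: we only need the bit-length of the witness to be bounded by the input).

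The main obstacle is controlling the size of the constructed proofs so that the entire construction stays provably PTIME in \sonetwo. This relies on three features of our setup: efficient numerals, so that $|\underline{n}|$ is of order $|n|$; the presence of the smash function $\sma$, so that the code of $\avu(\underline w)$ is bounded by $\gnum{\avu}\sma\gnum{\underline w}$ and substitution remains within $\exists\Sigma^{\sf b}_1$-accessible magnitudes; and the sharpness of bounded universal quantifiers, which caps the number of sub-proofs needed by $|t|$ rather than $t$. Once these bounds are in place, each induction step defines a PTIME-computable proof-constructor whose input-output specification is a $\Sigma^{\sf b}_1$-formula, so by Theorem~\ref{thm:buss-polytime} its totality and correctness are provable in \sonetwo, which yields $\sonetwo\vdash \avs\to\exists p\,{\sf proof}_T(p,\gnum{\avs})$, i.e.\ $\sonetwo\vdash\avs\to\Box_T\avs$.
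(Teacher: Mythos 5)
The paper itself gives no proof of this theorem; it is quoted as the standard provable $\exists\Sigma^{\sf b}_1$-completeness of \sonetwo (due to Buss, see also H\'ajek--Pudl\'ak), so your proposal has to be measured against the standard argument. Your overall architecture is the right one: external induction on the build-up of $\avs$, explicit construction of proofs of numeral instances, with size control coming from efficient numerals, the smash function, and the fact that the universal bounded quantifiers in $\Sigma^{\sf b}_1$ are sharp, so that only $|t|+1$ subproofs need to be conjoined.

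The genuine gap is the final step, where you invoke Theorem~\ref{thm:buss-polytime} to conclude that \sonetwo proves the \emph{correctness} of the proof-constructor. The second part of that theorem only yields, for a \textsf{PTIME} function $f$, a $\Sigma^{\sf b}_1$ graph formula $\avu_f$ together with $\sonetwo\vdash(\forall x)(\exists y)\,\avu_f(x,y)$, i.e.\ provable totality. It does not yield $\sonetwo\vdash(\forall x)(\forall y)\,(\avu_f(x,y)\to\psi(x,y))$ for a true specification $\psi$ --- here, ``$y$ codes a $T$-proof of the numeral instance of $\avs$'' --- and it cannot, since \sonetwo does not prove every true universally quantified fact about \textsf{PTIME} functions. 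The first part of the theorem, which would give $\sonetwo\vdash(\forall x)\,\avu(x,f_\avu(x))$, presupposes that \sonetwo already proves the relevant $\forall\exists$-statement, which is essentially what is to be shown, so appealing to it here is circular. Provable correctness is exactly where the content of the theorem lies, and it must be obtained by reasoning \emph{inside} \sonetwo: one first proves the numeral computation lemmas (such as $\Box_T(\underline m+\underline n=\underline{m+n})$ and $\Box_T(\underline m\sma\underline n=\underline{m\sma n})$) by $\Sigma^{\sf b}_1$-PIND on the length of an argument, and then the external induction on the structure of $\avs$ carries the provable implication $\avs\to\Box_T\avs$ through each connective and quantifier clause, rather than assembling an unverified function and certifying it afterwards. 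Two smaller points: the clause making $\neg\,\avu$ a $\Sigma^{\sf b}_1$-formula for $\Pi^{\sf b}_1$-formulas $\avu$ forces either a simultaneous induction with the dual $\Pi^{\sf b}_1$-statement or a prior reduction to negation normal form; and $\Box_T$ must be read modulo the designated interpretation of \sonetwo in $T$.
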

    
We now move on and consider interpretability.
There are various notions of formalised interpretability\footnote{see Theorem 1.2.10.\ of \cite{Joosten:2004:InterpretabilityFormalized,Joosten:2016:CharacterizationsInty}
    for a discussion on their relationships.}
Here we are interested in \emph{theorems-interpretability},
    i.o.w.~we say that $k$ is an interpretation of $V$ in $U$
    (we write $k:U\rhd V$) if and only if,
    $(\forall\avu)\, (\Box_V\avu\to \Box_U\avu^k)$.
Here $\Box_V$ and $\Box_U$ are 
    the provability predicates of $V$ and $U$, respectively.
We remind the reader that  theorems-interpretability is \sonetwo-provably transitive
---unlike axioms-interpretability.

The $k$-translation of $\avu$ is denoted as $\avu^k$.
If $V$ is a finitely axiomatisable theory,
    then $U\rhd V$ is in fact 
    a $\exists\Sigma^{\sf b}_1$ sentence.
This is due to the fact that, for finitely axiomatised theories $V$,
    their interpretability in $U$ boils down to the provability of the translation of the conjunction of their
    axioms and the fact that the translation function is {\sf P-TIME}.
As the theories studied in this paper are all $\Delta^{\sf b}_1$-axiomatisable,
    the aforementioned statement is $\exists \Delta^{\sf b}_1$,
    in particular $\exists \Sigma^{\sf b}_1$.

\subsection{Modal interpretability logics}\label{lekkerbeksmurf}
There are many different interpretability logics in the literature. 
The language of interpretability logics is that of propositional logic together with a unary modal operator $\Box$ and a binary modal operator $\rhd$. 
We adhere to the binding convention that the Boolean operators bind as usual and
that $\Box$ binds as strong as $\neg$ with all Boolean operators except $\to$ binding stronger than $\rhd$ and $\rhd$ binding stronger than $\to$. Thus, for example, 
$A\rhd B \to \Diamond A \wedge \Box C \rhd B \wedge \Box C$ will be short for:  
\[(A\rhd B) \to \Big( (\Diamond A \wedge \Box C) \rhd (B \wedge \Box C) \Big).\]
Most interpretability logics extend a core logic called \il.

\begin{definition} The logic \il has as axioms all tautologies in the modal propositional language containing $\Box$ and $\rhd$ together with all instances of 
the following axioms.
\[
\begin{array}{ll}
\prin{L_1} & \vdash \Box (A \rightarrow B) 
\rightarrow (\Box A \rightarrow {\Box} B) \\
\prin{L_2} &  \vdash{\Box} A \rightarrow {\Box}
{\Box} A  \\
\prin{L_3} & \vdash {\Box}
({\Box} A \rightarrow A) \rightarrow 
{\Box} A  \\\\
\prin{J_1} &  \vdash
\Box (A \rightarrow B) \rightarrow A \rhd B \\
\prin{J_2} &  \vdash (A \rhd B ) \wedge (B \rhd C)
\rightarrow A \rhd C \\
 %
\prin{J_3} &  \vdash (A \rhd C) \wedge (B \rhd C)
\rightarrow  A\vee B\, \rhd C \\
\prin{J_4} &  \vdash
 A \rhd B \rightarrow (\Diamond A \rightarrow \Diamond B)  \\
%
%
\prin{J_5} &  \vdash
\Diamond A \rhd A 
\end{array}
\]
The only rules are Necessitation $\prin{Nec}:\; {\vdash A} \ \To \ {\vdash \Box A}$ and Modus ponens. 
\end{definition}

We will consider extensions of \il by adding axiom schemes to \il. These logics will be named by appending the names of the new schemes to \il.
For example, the principle \principle{P} is given by $\vdash A\rhd B \to \Box (A\rhd B)$ and the logic \ilp arises by adding this scheme/principle to \il.
Likewise, the principle \principle{P_0} is given by $\vdash A\rhd \Diamond B \to \Box (A\rhd B)$ and the logic \ilpnaught arises by adding this scheme/principle to \il.

For later use, we prove the following easy observation.
\begin{lemma}\label{lemma:equivalentOfJ5}
If we replace in \il the axiom schema  
$\principle{J_5}:\;  \vdash\Diamond A \rhd A$ by $\principle{J_5}':\; \vdash B\rhd \Diamond C \to B\rhd  C$, then the resulting logic will be equivalent to the original logic \il.
\end{lemma}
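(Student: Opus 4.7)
The plan is to show two simple derivations, one in each direction, working over the common fragment of \il that excludes only $\prin{J_5}$.

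For the direction from \il to the variant logic, I would assume $\prin{J_5}$ and the other axioms, and derive $\prin{J_5}'$. Suppose we have $B \rhd \Diamond C$. By $\prin{J_5}$ (instantiated with $A := C$) we have $\Diamond C \rhd C$. Now apply transitivity $\prin{J_2}$ to $B \rhd \Diamond C$ and $\Diamond C \rhd C$ to conclude $B \rhd C$. This gives $\prin{J_5}'$ by propositional reasoning.

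For the converse, assume $\prin{J_5}'$ together with $\prin{J_1}$--$\prin{J_4}$ and the modal axioms, and derive $\prin{J_5}$, i.e.\ $\Diamond A \rhd A$. The trick is to apply $\prin{J_5}'$ with $B := \Diamond A$ and $C := A$, which gives $\Diamond A \rhd \Diamond A \to \Diamond A \rhd A$. So it suffices to prove $\Diamond A \rhd \Diamond A$. This is an instance of the schema $X \rhd X$, which holds in the common fragment: by Necessitation applied to the tautology $\Diamond A \to \Diamond A$ we get $\Box(\Diamond A \to \Diamond A)$, and then $\prin{J_1}$ yields $\Diamond A \rhd \Diamond A$. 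Modus ponens finishes the derivation.

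There is no real obstacle here; both directions are short applications of $\prin{J_1}$ and $\prin{J_2}$. The only minor point worth flagging is that in the second direction one must notice that $X \rhd X$ is derivable in the $\prin{J_5}$-free fragment via $\prin{J_1}$, so the argument does not covertly reuse $\prin{J_5}$.
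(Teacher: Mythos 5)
Your proof is correct and follows essentially the same route as the paper: $\prin{J_5}'$ follows from $\prin{J_5}$ via $\prin{J_2}$, and conversely $\prin{J_5}$ is recovered by instantiating $\prin{J_5}'$ with $B := \Diamond A$, $C := A$ and discharging the antecedent $\Diamond A \rhd \Diamond A$. The only difference is that you explicitly justify $\Diamond A \rhd \Diamond A$ via Necessitation and $\prin{J_1}$, a step the paper leaves as "clearly provable"; that is a welcome bit of extra care.
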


\begin{proof}
Any formula $\Diamond A \rhd A$ is obtained from $B\rhd \Diamond C \to B\rhd  C$ by instantiating in the latter formula $B$ by $\Diamond A$ and $C$ by $A$. Thus, we get $\Diamond A \rhd \Diamond A \to \Diamond A\rhd  A$ since the antecedent is clearly provable without using $\principle{J_5}'$.

For the other direction, we reason in \il and assume $B\rhd \Diamond C$. Now, by \prin{J_5} we get $\Diamond C \rhd C$ so that by the transitivity axiom \principle{J_2} we obtain the required $B\rhd C$.
\end{proof}

\section{Tweaking the axiom set}\label{apthe}

For finitely axiomatised theories $V$, we 
have:
\[
    \sonetwo \vdash U \rhd V \rightarrow \Box_{\sonetwo} (U \rhd V),
\]
by $\exists \Sigma^{\sf b}_1$ completeness because $U \rhd V$ is  a $\exists\Sigma^{\sf b}_1$-sentence.
Recall that, in this paper, as a default, all theories are assumed to 
    be $\Delta^{\sf b}_1$-axiomatised.
If this were not the case, $U \rhd V$ need not, of course,
    be a $\exists\Sigma_1^{\sf b}$-sentence, even for finitely axiomatised theories $V$.
To mimic the \principle{P}-style behaviour for an arbitrary 
theory $V$, we will modify $V$\/ to a new theory $V'$\/ that approximates $V$\/
to obtain 
$\sonetwo \vdash U \rhd V \to\Box_{\sonetwo} (U \rhd V')$.
Of course, the new theory $V'$\/ should be sufficiently like  $V$\/
to be useful.
Thus, we  define a theory $V'$ that is extensionally
the same as $V$, but for which 
$U \rhd V'$ is a statement that is so simple that under the assumption that $U\rhd V$, we
can easily infer $\Box_{\sonetwo} (U \rhd V')$.

\subsection{The approximating theory defined}

We start with a first approximation.
Given some translation $k$, let us {\em define}\/ the set of  axioms  $V'$ as consisting of just 
those axioms $\phi$ of $V$\/ such that
 $U\vdash \phi^k$.
 Note that, if $k:U\rhd V$, then $V$ and $V'$ have the same axioms.
 However, when $V$ is not finitely axiomatisable, in general, we cannot take the insight $V\equiv V'$ with us when we proceed to
 reason inside a box. In formulas: we do have $k:U\rhd V \ \Rightarrow \ V \equiv V'$ but in general we do not have $k:U\rhd V \ \Rightarrow \ \Box ( V \equiv V')$.

  Notwithstanding, defining $V'$ as above is useful and works modulo some trifling details. Firstly, the definition of the
  new axiom set does not have the right complexity. Secondly, if the
  argument is not set up in a careful way, we may seem to need 
  both $\Sigma_1$-collection and \expo.
  We shall use a 
variation of Craig's trick so that the axiom sets that we consider 
will remain to be $ \Delta_1^{\sf b}$-definable.
The same trick makes the use of strong principles, like 
 $\Sigma_1$-collection and \expo,
superfluous.
   
\begin{definition}\label{defi:finapr}
Let $U$ and $V$ be  $ \Delta_1^{\sf b}$-axiomatised theories. Moreover, let $k$\/ be a translation of the language of $V$ into the language of $U$ that includes a domain specifier.

We remind the reader of Smory\'nski's dot notation. e.g., $\gnum{\dot p = \dot p}$ functions as a term that is the arithmetisation of the map
$p \mapsto \underline p=\underline p$.
Here is our definition of \fint{V}{U}{k}.
\begin{eqnarray*}
\axioms{\fint{V}{U}{k}}{x} & \eqbydef &
  (\exists\, p, \avu \bles x)\ 
\Big(x= {\sf conj}(\avu, \gnum{\dot p=\dot p})
\wedge \\ &&
  \hspace*{1.3cm}  \axioms{V}{\avu}\wedge \bewijs{U}{p,\trans{\avu}{k}} \Big).
\end{eqnarray*}

\end{definition}
\noindent
We note that this is
    a $\Sigma^{\sf b}_1$-formula. We can see that is equivalent to a $\Delta_1^{\sf b}$-formula by describing a procedure
    for deciding whether $\avd$ is a $\fint{V}{U}{k}$-axiom. 
    \begin{enumerate}[{\sf step} 1.]
        \item Is $\avd$ a conjunction? If not, $\avd$ does not qualify. Otherwise, proceed to the next step.
         \item Is the first conjunct of $\avd$, say $\avt$, a $V$-axiom? If not, $\avd$ does not qualify. Otherwise, proceed to the next step.
         \item 
         Is the second conjunct of the form $\underline p = \underline p $ and do we have \bewijs{U}{p,\trans{\avt}{k}}?
         If not, $\avd$ does not qualify. If so, $\avd$ will indeed be a $\fint{V}{U}{k}$-axiom.
    \end{enumerate}


The following lemma tells us that \sonetwo verifies that $k:U\rhd V$\/ implies that
 $V$\/ and $\fint{V}{U}{k}$ are {\em extensionally equal}. Actually, $V\rhd \fint{V}{U}{k}$ always holds and does not depend on the assumption $k:U\rhd V$.

\begin{lemma}\label{lemm:boxextensio}
Let $U$ and $V$ be  $ \Delta_1^{\sf b}$-axiomatised theories. We have
 \begin{enumerate}
 \item\label{bommel}
 $\sonetwo\vdash (\forall k)\,({\sf id}: V\rhd \fint{V}{U}{k})$.
 \item\label{tompoes}
 $\sonetwo \vdash (\forall k)\,( k:U\rhd V \to {\sf id}:\fint{V}{U}{k}\rhd V)$.
 \end{enumerate}
\end{lemma}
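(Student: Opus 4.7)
My plan is to handle the two parts separately, exploiting the asymmetry: part \ref{bommel} is essentially book-keeping, whereas part \ref{tompoes} is where the hypothesis $k:U\rhd V$ does real work by supplying the witnessing $U$-proof that Craig-codes each $V$-axiom into the approximation.

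For part \ref{bommel}, I would argue in \sonetwo{} as follows. Since the default interpretation $\sf id$ translates every formula to itself, I need only show $\sonetwo\vdash \forall \avu\,(\Box_{\fint{V}{U}{k}}\avu \to \Box_V \avu)$. By Definition~\ref{defi:finapr}, every axiom of $\fint{V}{U}{k}$ has the form ${\sf conj}(\avu,\gnum{\dot p=\dot p})$ where $\avu$ is a $V$-axiom. From $V$'s point of view this is the conjunction $\avu \wedge \underline p = \underline p$, and a short fixed schematic derivation (invoke the $V$-axiom, invoke the logical axiom $\underline p = \underline p$, apply conjunction introduction) produces a $V$-proof of it whose code is polynomial in the code of the $\fint{V}{U}{k}$-axiom. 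Hence, given a $\fint{V}{U}{k}$-proof $q$ of $\avu$, we can uniformly and poly-time replace each axiom step by this little schematic derivation. By Theorem~\ref{thm:buss-polytime}, this translation is \sonetwo-definable as a total function on proofs, so \sonetwo{} verifies the resulting object is a $V$-proof of $\avu$.

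For part \ref{tompoes}, I reason inside \sonetwo{} with the hypothesis $k:U\rhd V$, i.e.\ $\forall \avu\,(\Box_V \avu \to \Box_U \avu^k)$. Fix any $V$-proof $q$ of $\avu$, and I want to produce a $\fint{V}{U}{k}$-proof of $\avu$. For each $V$-axiom $\avu_i$ that occurs in $q$, the hypothesis applied to $\avu_i$ yields some $p_i$ with $\bewijs{U}{p_i, \avu_i^k}$; this is exactly the side condition in Definition~\ref{defi:finapr}, so ${\sf conj}(\avu_i,\gnum{\dot p_i = \dot p_i})$ is an axiom of $\fint{V}{U}{k}$, from which $\avu_i$ follows by a single step of conjunction elimination. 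Concatenating these short derivations in front of $q$ yields a $\fint{V}{U}{k}$-proof of $\avu$.

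The main obstacle I anticipate is justifying this last construction in \sonetwo{} without resorting to $\Sigma_1$-collection or \expo{} to ``collect'' all the $p_i$. This is precisely what the Craig-style encoding buys us: each $p_i$ is needed only \emph{locally}, to certify that a particular axiom already belongs to $\fint{V}{U}{k}$; it never has to be produced outside the axiom that contains it. Concretely, I would proceed by $\Sigma^{\sf b}_1$-PIND on the length of $q$, maintaining the invariant that a $\fint{V}{U}{k}$-proof of the first $i$ lines has been constructed, using the hypothesis at each inductive step to supply the single fresh $p_i$ when line $i{+}1$ invokes a new $V$-axiom. Since the existential claim ``there is a $\fint{V}{U}{k}$-proof of $\avu$'' is $\exists\Sigma^{\sf b}_1$, and each induction step only adds bounded-complexity material, the construction goes through entirely within \sonetwo.
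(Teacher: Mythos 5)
Your part~\ref{bommel} is correct and coincides with the paper's argument: replace each axiom $\avu\wedge(\underline p=\underline p)$ by its trivial derivation from the $V$-axiom $\avu$; this is p-time.

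Part~\ref{tompoes} has a genuine gap, exactly at the point you flag and then claim to resolve. Your plan is to apply the hypothesis $k:U\rhd V$ once per axiom $\tau_i$ occurring in the $V$-proof, obtaining for each a $U$-proof $p_i$ of $\tau_i^k$. But each $p_i$ is not ``local'' in the sense you claim: the witnessing $\fint{V}{U}{k}$-axiom is ${\sf conj}(\tau_i,\gnum{\dot p_i=\dot p_i})$, whose G\"odel number is of the order of $p_i$ itself, and this axiom must physically occur in the final proof $r$. Since $k:U\rhd V$ gives no bound on $p_i$ in terms of $\tau_i$, the family $(p_i)_i$ may be cofinal (the number of axioms $n$ need not be standard), and then no coded object $r$ exists. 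Bounding all the $p_i$ simultaneously is precisely an instance of $\Sigma_1$-collection, which is unavailable. Your proposed repair by $\Sigma^{\sf b}_1$-PIND on the length of $q$ does not work either: the invariant ``a $\fint{V}{U}{k}$-proof of the first $i$ lines exists'' has an existential quantifier over an object whose size cannot be bounded by any term in $q$ and $i$ (it accumulates the unbounded witnesses $p_0,\dots,p_i$), so it is $\Sigma_1$ but not $\Sigma^{\sf b}_1$, and \sonetwo{} has no induction for it; if it did, you would essentially have re-derived collection.

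The missing idea is the paper's single-application trick, and it is where \emph{theorems}-interpretability (rather than axioms-interpretability) is essential. Form the (possibly non-standard) conjunction $\tau$ of all axioms $\tau_i$ occurring in $p$; by the naturality of the coding, $\tau$ is bounded by $p$. Since $\Box_V\tau$, one application of $k:U\rhd V$ yields a \emph{single} $U$-proof $q$ of $\tau^k$, and the individual proofs $q_i$ of $\tau_i^k$ can then be extracted from $q$ with $|q_i|$ of order $|q|^2$. This gives the uniform bound that collection would otherwise provide, and the resulting $r$ has size of order $|p|\cdot|q|^2$, so the whole transformation is p-time in $p$ and $q$ and goes through in \sonetwo.
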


\begin{proof}
Ad (\ref{bommel}).  Reason in \sonetwo. We have to show:
$\Box_{\fint{V}{U}{k}}\varphi \rightarrow \Box_V \varphi$.
This is easily seen to be true, since we can replace every axiom 
$\varphi \wedge (\underline{p} = \underline{p})$ of \fint{V}{U}{k} by 
a proof of $\varphi \wedge (\underline{p} = \underline{p})$
from the  $V$-axiom $\varphi$. The resulting transformation
is clearly p-time.

\bident
Ad (\ref{tompoes}). Reason in \sonetwo. Suppose $k:U\rhd V$ and
$\Box_V\varphi$. We set out to prove $\Box_{\fint{V}{U}{k}}\varphi$. Let $p$ be a proof of $\varphi$ from $V$-axioms 
$\tau_0, \ldots ,\tau_n$. (Note that $n$ need not be standard.) 
We would be done, if we could replace every axiom occurrence of $\tau_i$ in $p$ by
$$
\infer[\wedge {\sf E},l]{\tau_i}{\tau_i \wedge 
(\underline{q_i}=\underline{q_i})}
$$
where $q_i$ would be a $U$-proof of $\tau_i^k$, so that we would obtain a 
$\fint{V}{U}{k}$-proof $r$\/ of $\varphi$. Clearly, for each $\tau_i$ we have that $\Box_V \tau_i$, so that by 
our assumption $k:U\rhd V$ we indeed obtain a $U$ proof $q_i$ of $\tau_i^k$. However, these proofs $q_i$ may be cofinal and, thus,
we would need a form of collection to exclude that possibility to keep the resulting syntactical object $r$ finite. 

It turns out that we can perform a little trick to avoid the use of collection. To this end, let $\tau$\/ be the (possibly non-standard) conjunction of these axioms $\tau_i$.
Note  that, by the naturality conditions on our coding, $\tau$ is bounded by $p$. Since, clearly, we have $\Box_{V}\tau$
we may find, using $k:U\rhd V$, a $U$-proof $q$\/ of
$\tau^k$. Here it is essential that we employ \textit{theorems interpretability} in this paper! 
We may use $q$
to obtain $U$-proofs of $q_i$ of 
$\trans{\tau_i}{k}$. Clearly,  we can extract appropriate $q_i$ from $q$ in such a way that $|q_i|$ is bounded by a term of order $|q|^2$.
We can now follow our original pland and replace every axiom occurrence of $\tau_i$ in $p$ by
$$
\infer[\wedge {\sf E},l]{\tau_i}{\tau_i \wedge 
(\underline{q_i}=\underline{q_i})}
$$
and obtain a 
$\fint{V}{U}{k}$-proof $r$\/ of $\varphi$.
We find that $|r|$ is bounded by a term of order $|p|\cdot|q|^2$. 
So, $r$ can indeed be found in p-time from the  given $p$ and $q$.
\end{proof}

For the previous lemma to hold it is essential that
 we work with efficient numerals  $\underline{p}$.
The reader may find it instructive to rephrase the lemma in terms of provability.

\begin{corollary}\label{cor:boxextensioProvability} 
For $U$ and $V$ being $\Delta_1^{\sf b}$-axiomatised theories we have 
 \begin{enumerate}
 \item\label{tommel}
 $\sonetwo\vdash (\forall k)\,(\forall \varphi)\, ( \Box_{\fint{V}{U}{k}} \varphi \to \Box_V \varphi)$;
 \item\label{bompoes}
 $\sonetwo \vdash (\forall k)\, \big( k:U\rhd V \to \forall \varphi\; ( \Box_{\fint{V}{U}{k}} \varphi \leftrightarrow \Box_V \varphi) \big)$.
 \end{enumerate}
\end{corollary}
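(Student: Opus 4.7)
The plan is to obtain the corollary as a direct reading of Lemma~\ref{lemm:boxextensio} through the definition of theorems-interpretability. Recall that $k:U\rhd V$ has been defined as $(\forall\avu)(\Box_V\avu\to\Box_U\avu^k)$. In particular, taking $k$ to be the identity translation ${\sf id}$, the statement $\idint:V\rhd \fint{V}{U}{k}$ unfolds to precisely $(\forall\avu)(\Box_{\fint{V}{U}{k}}\avu\to\Box_V\avu^{\idint})$, and since $\avu^{\idint}$ is $\avu$ itself (modulo trivial syntactic bookkeeping, which \sonetwo\ verifies), this is exactly the conclusion of part (\ref{tommel}). Thus part (\ref{tommel}) is obtained by invoking Lemma~\ref{lemm:boxextensio}(\ref{bommel}) and transporting it across the definition of $\rhd$ inside \sonetwo.

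For part (\ref{bompoes}), I would reason inside \sonetwo, assume $k:U\rhd V$, and then apply Lemma~\ref{lemm:boxextensio}(\ref{tompoes}) to extract $\idint:\fint{V}{U}{k}\rhd V$. By the definition of $\rhd$ and the fact that the identity translation acts trivially, this yields $(\forall\avu)(\Box_V\avu\to\Box_{\fint{V}{U}{k}}\avu)$. Combined with part (\ref{tommel}), which holds unconditionally, this gives the biconditional $(\forall\avu)(\Box_{\fint{V}{U}{k}}\avu\leftrightarrow\Box_V\avu)$ as required.

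There is essentially no obstacle; the only thing worth being careful about is that the internalisation respects the quantifier over $k$ and $\avu$, i.e.\ that we are not silently collapsing an external quantifier into an internal one. Since Lemma~\ref{lemm:boxextensio} is already phrased with the universal quantifier over $k$ inside \sonetwo, and the unfolding of $k:U\rhd V$ into a $\Pi_1$-statement over codes of formulas is available in \sonetwo, this transition is immediate. The proof therefore reduces to a one- or two-line citation of the lemma and a remark that $\avu^{\idint}=\avu$ provably in \sonetwo.
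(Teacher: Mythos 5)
Your proposal is correct and coincides with what the paper intends: the corollary is stated immediately after Lemma~\ref{lemm:boxextensio} precisely as its rephrasing in terms of provability, obtained by unfolding ${\sf id}:V\rhd \fint{V}{U}{k}$ and ${\sf id}:\fint{V}{U}{k}\rhd V$ through the definition of theorems-interpretability and the triviality of the identity translation. No further argument is needed.
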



\noindent
As mentioned before, even though we have extensional equivalence of $V$ and $\fint{V}{U}{k}$ under the assumption that $k:U\rhd V$, we do not necessarily have this under a provability predicate.
That is, although we do have
$\Box_{\sonetwo} (\Box_{\fint{V}{U}{k}}\varphi \rightarrow \Box_V \varphi)$ we
shall, in general, not have $k:U \rhd V \to \Box_{\sonetwo} (\Box_V \varphi \rightarrow 
\Box_{\fint{V}{U}{k}} \varphi)$.\\ 
\medskip

\subsection{A \principle{P}-like principle for the approximated theory}

The theory $\fint{V}{U}{k}$ is defined precisely so that it being interpretable in $U$ is true almost by definition. This is even independent on $k$ being 
or not an interpretation of $V$ in $U$. The following lemma reflects this insight.

\begin{lemma}\label{lemm:bijnap}
For $U$ and $V$,  $ \Delta_1^{\sf b}$-axiomatised theories we have 
 \[\sonetwo\vdash (\forall k)\,( k: U\rhd \fint{V}{U}{k}).\]
\end{lemma}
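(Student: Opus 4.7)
My plan is to mimic the size-bookkeeping strategy used in the proof of Lemma~\ref{lemm:boxextensio}(\ref{tompoes}), but now going in the ``other direction'': we start from a $\fint{V}{U}{k}$-proof and have to convert it to a $U$-proof after $k$-translating. Reason in \sonetwo and fix an arbitrary translation $k$. Unfolding the definition of theorems-interpretability, the goal is to show that for every formula $\avu$, from $\Box_{\fint{V}{U}{k}}\avu$ we can \textbf{p-time} construct a $U$-proof of $\avu^k$.

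So suppose $p$ is a $\fint{V}{U}{k}$-proof of $\avu$. By the definition of \axioms{\fint{V}{U}{k}}{\cdot}, every axiom occurring in $p$ has the shape $\tau_i \wedge (\underline{q_i}=\underline{q_i})$ where $\tau_i$ is a $V$-axiom and $q_i$ is a genuine $U$-proof of $\tau_i^k$. By our naturality conventions on coding (cf. the discussion preceding Lemma~\ref{lemm:boxextensio}), both $\tau_i$ and $q_i$ are bounded by the axiom in which they occur, hence by $|p|$. The first step is to form the $k$-translation $p^k$ of the proof $p$; since $k$ translates logical inferences to (sequences of) logical inferences, $p^k$ is (essentially) a proof of $\avu^k$ from the translated axioms $\tau_i^k \wedge \bigl((\underline{q_i})^k =^k (\underline{q_i})^k\bigr)$, up to adjustments for the identity-translation handled uniformly by our default assumption that $U \vdash \mathfrak{id}^k_\Theta$.

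The second step is to replace each axiom occurrence in $p^k$ by a short $U$-derivation of the translated axiom. For $\tau_i^k$ we simply splice in the proof $q_i$, which we extract directly from the axiom term itself. For the reflexivity conjunct $(\underline{q_i})^k =^k (\underline{q_i})^k$ we use the fact that reflexivity of $=^k$ is derivable in $U$ from $\mathfrak{id}^k_\Theta$ by a short, uniform derivation whose size depends only polynomially on $|q_i|$ (the only nontrivial ingredient being the numeral $\underline{q_i}$). A final conjunction-introduction yields the translated axiom in $U$. Concatenating all these local patches with the translated skeleton $p^k$ produces a genuine $U$-proof $r$ of $\avu^k$.

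Finally we verify that the construction is p-time: since every $q_i$ and every $\tau_i$ is bounded by $|p|$, the number of axiom occurrences is at most $|p|$, and each local patch has size bounded by a polynomial in $|p|$; therefore $|r|$ is polynomial in $|p|$ and $r$ can be produced in p-time from $p$. The reason this avoids \expo and $\Sigma_1$-collection, just as in Lemma~\ref{lemm:boxextensio}(\ref{tompoes}), is precisely the Craig-style encoding of the $U$-proofs \emph{inside} the axioms of \fint{V}{U}{k}: we never have to search for the $q_i$'s, they are handed to us. The main technical nuisance I anticipate is the careful treatment of the identity translation (so that $(\underline{q_i})^k=^k(\underline{q_i})^k$ is $U$-provable by a uniformly bounded derivation), but this is taken care of by the default-translation convention set up in Section~\ref{voetbalsmurf}.
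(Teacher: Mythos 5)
Your proposal is correct and follows essentially the same strategy as the paper: exploit the Craig-style encoding so that the $U$-proofs $q_i$ of the $\tau_i^k$ are read off directly from the axioms of $\fint{V}{U}{k}$, splice them in, and check the whole transformation is p-time (hence no collection or \expo is needed). The one place you diverge is the order of operations. The paper first strips the conjuncts $\underline{q_i}=\underline{q_i}$ from the axioms (turning $p$ into a $V$-proof $p'$ of $\avu$, exactly as in Lemma~\ref{lemm:boxextensio}(\ref{bommel})) and only \emph{then} applies the standard translation-of-proofs construction, so the resulting predicate-logical proof of $\avu^k$ has assumptions of the form $\tau_i^k$ only, each discharged by prepending $q_i$. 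You instead translate the entire proof $p$, which leaves you with the extra obligation of deriving $(\underline{q_i}=\underline{q_i})^k$ in $U$. This is the ``technical nuisance'' you flag, and it is not entirely innocent: translating the closed term $\underline{q_i}$ under a possibly relational translation of the arithmetical function symbols requires more than $\mathfrak{id}^k_\Theta$ (one also needs the $k$-translations of totality/functionality for the symbols occurring in efficient numerals, with uniformly short proofs). The paper's ordering makes this issue evaporate, so you should adopt it; otherwise your argument needs an explicit lemma covering the translated numeral identities.
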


\begin{proof}
Reason in \sonetwo. Suppose $p$\/ is a $\fint{V}{U}{k}$-proof of $\phi$. 
 We want to construct a $U$-proof of $\phi^k$. As a first step we
 transform $p$\/ into a $V$-proof $p'$\/of $\phi$ as we did in the proof of
 Lemma~\ref{lemm:boxextensio},(\ref{bommel}): replacing all axioms $\varphi \wedge (\underline{s} = \underline{s})$ of \fint{V}{U}{k} by 
a proof of $\varphi \wedge (\underline{s} = \underline{s})$
from the  $V$-axiom $\varphi$. 

Next we transform 
 $p'$, using $k$, into a predicate logical proof $q$\/ of $\phi^k$\/ from assumptions 
 $\tau_i^k$, where each $\tau_i$\/ is a $V$-axiom. It is well known that this transformation
 is p-time. 
 
 Finally, each axiom $\tau_i$\/ extracted from $p$, comes from
 a $\fint{V}{U}{k}$-axiom $\tau_i\wedge (\underline r_i=\underline r_i)$, where 
 $r_i$\/ is a $U$-proof of $\tau_i^k$. So our final step is to extend $q$\/
 to a $U$-proof $q'$\/ by prepending the $U$-proofs $r_i$\/ above the corresponding $\tau_i^k$.
 This extension will at most double the number of symbols of $q$, so
 $q'\approx q^2$. 
\end{proof}

As a direct consequence of this lemma, we see via necessitation that $\sonetwo \vdash \Box_\sonetwo (\forall k)\,( k: U\rhd \fint{V}{U}{k})$ 
so that in a trivial way we obtain something that comes quite close to the \principle{P}-schema:
\begin{equation}\label{equation:AlmostPButNotYet}
\sonetwo \vdash U\rhd V \to  \Box_\sonetwo (\forall k)\,( k: U\rhd \fint{V}{U}{k}).
\end{equation}
However, Equation~\ref{equation:AlmostPButNotYet}, is somewhat strange, since the antecedent of the implication
does no work at all. In this paper, we are interested in finite extensions. Fortunately, a minor modification of  
Equation~\ref{equation:AlmostPButNotYet} does give information about finite extensions.

\begin{theorem}\label{theorem:PLikePrincipleAtL:general}
Let $U$ and $V$ be $\Delta_1^{\sf b}$-axiomatised theories.
We have:
\[\sonetwo \vdash (\forall k)\,(\forall \avu)\;  k: U\rhd (V+\avu) \to  \Box_\sonetwo\,  (k: U\rhd (\fint{V}{U}{k}+\avu)).\]
\end{theorem}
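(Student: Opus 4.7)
The plan is to reason inside \sonetwo, fix $k$ and $\avu$, assume $k: U\rhd (V+\avu)$, and combine two ingredients inside a provability box: (i) the unconditional fact from Lemma~\ref{lemm:bijnap} that $k:U\rhd \fint{V}{U}{k}$, and (ii) the additional datum that $U$ proves $\avu^k$, which follows from the assumed interpretability and is of low enough complexity to propagate through \sonetwo-provability.

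More concretely, first I would extract from $k: U\rhd (V+\avu)$ the simple consequence $\Box_U \avu^k$. This is immediate by instantiating the definition of theorems-interpretability at the formula $\avu$, using the trivial fact $\Box_{V+\avu}\avu$. Since $U$ is $\Delta_1^{\sf b}$-axiomatised, the statement $\Box_U\avu^k$ is $\exists \Sigma_1^{\sf b}$, so by \sonetwo-provable $\exists\Sigma_1^{\sf b}$-completeness we obtain $\Box_\sonetwo \Box_U \avu^k$.

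Next, by Lemma~\ref{lemm:bijnap} we have $\sonetwo \vdash (\forall k)\,(k: U\rhd \fint{V}{U}{k})$, and necessitation gives $\sonetwo \vdash \Box_\sonetwo (\forall k)\,(k: U\rhd \fint{V}{U}{k})$, and thus in particular $\Box_\sonetwo (k: U\rhd \fint{V}{U}{k})$ for our fixed $k$. Combining under the box, \sonetwo verifies: $k: U\rhd \fint{V}{U}{k}$ together with $\Box_U \avu^k$. To conclude $k: U\rhd (\fint{V}{U}{k}+\avu)$, reason further inside the box and fix an arbitrary $\phi$ with $\Box_{\fint{V}{U}{k}+\avu}\phi$. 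By the formalised deduction theorem, $\Box_{\fint{V}{U}{k}}(\avu \to \phi)$, so by $k: U\rhd \fint{V}{U}{k}$ we have $\Box_U (\avu \to \phi)^k$, i.e.\ $\Box_U(\avu^k \to \phi^k)$. Combining with $\Box_U \avu^k$ via the Hilbert--Bernays--L\"ob conditions yields $\Box_U \phi^k$.

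The main thing to watch is the interaction between the provability box and the quantifier over $\phi$: I need the whole final step (deduction theorem, applying the interpretability of $\fint{V}{U}{k}$ in $U$, modus ponens under $\Box_U$) to be genuinely inside a \sonetwo-provability box, which is fine since each component is a standard fact formalisable in \sonetwo. Provided the coding is natural and efficient so that the deduction theorem and the mechanics of \fint{V}{U}{k} go through at the level of \sonetwo (as in Lemma~\ref{lemm:boxextensio}), no use of $\Sigma_1$-collection or \expo{} is required. There is no new Craig-style surgery here; the statement is essentially the P-like Equation~\ref{equation:AlmostPButNotYet} of Lemma~\ref{lemm:bijnap} with the single extra premise $\Box_U\avu^k$, which is cheap because $\avu$ is a single formula and hence $\Box_U\avu^k$ is $\exists\Sigma_1^{\sf b}$.
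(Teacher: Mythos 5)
Your proposal is correct and follows essentially the same route as the paper's own proof: extract $\Box_U\avu^k$ from the assumed interpretability, push it under the outer box by provable $\exists\Sigma_1^{\sf b}$-completeness, invoke the unconditional Lemma~\ref{lemm:bijnap} under that box, and finish with the deduction theorem plus modus ponens under $\Box_U$. If anything, your version is slightly more careful than the paper's, which writes the outer box as $\Box_U$ rather than the $\Box_\sonetwo$ demanded by the statement; your use of $\Box_\sonetwo$ is the right reading.
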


\begin{proof}
    We reason in \sonetwo. Suppose $U \rhd (V+A)$. It follows that $\Box_UA^k$, and, hence, $\Box_U\Box_UA^k$.
    
    We reason inside the $\Box_U$. We have both $\Box_U\avu^k$ and $k:U\rhd \fint{V}{U}{k}$. 
    We prove $U \rhd (\fint{V}{U}{k}+\avu)$.
    Consider any $V$-sentence $B$ and
    suppose $\Box_{\fint{V}{U}{k}+\avu}\avd$. It follows that $\Box_{\fint{V}{U}{k}}(\avu\to \avd)$. Hence, $\Box_U (\avu\to \avd)^k$.
    We may conclude that $\Box_U \avd^k$, so we are done.
\end{proof}

We will need the following thinned version of Theorem~\ref{theorem:PLikePrincipleAtL:general}, which shall 
be the final version of our approximation of the principle \principle{P}.

\begin{theorem}\label{theorem:PLikePrincipleAtL}
Let $T$ be a  $ \Delta_1^{\sf b}$-axiomatised theory 
and let $\avt$ and $\avq$ be $T$-sentences.
We have:
\begin{enumerate}[a.]
    \item 
    $\sonetwo \vdash (\forall k)\,( k: (T + \avq ) \rhd (T + \avqq) \to \Box_{\sonetwo}\, k: (T + \avq ) \rhd (\fint{T}{T+\avq}{k} + \avqq))$,
    \item 
    $\sonetwo \vdash (\forall k)\,( k: (T + \avq ) \rhd (T + \avqq) \to \Box_{\sonetwo}\, k: (T + \avq ) \rhd (\fint{T}{T}{k} + \avqq))$.
\end{enumerate}
\end{theorem}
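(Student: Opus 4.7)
The plan is to derive part (a) as a direct instance of Theorem~\ref{theorem:PLikePrincipleAtL:general}, and then obtain part (b) from part (a) via a monotonicity observation about the approximating axiom set.

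For part (a), I would instantiate Theorem~\ref{theorem:PLikePrincipleAtL:general} with $U := T+\avq$, $V := T$, and $\avu := \avqq$. The hypothesis of the general theorem becomes $k: (T+\avq) \rhd (T+\avqq)$ and the conclusion becomes $\Box_\sonetwo\, k: (T+\avq) \rhd (\fint{T}{T+\avq}{k} + \avqq)$, which is exactly the statement of (a). So (a) is essentially a syntactic specialisation.

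For part (b), the general theorem does not apply directly, because the approximating theory there is $\fint{T}{T+\avq}{k}$, whereas we want $\fint{T}{T}{k}$. The key observation I would use is that, provably in \sonetwo, the axioms of $\fint{T}{T}{k}$ form a subset of the axioms of $\fint{T}{T+\avq}{k}$: by inspection of Definition~\ref{defi:finapr}, if a code $x$ qualifies as a $\fint{T}{T}{k}$-axiom via witnesses $p, \avu$, then exactly the same $x, p, \avu$ witness $\axioms{\fint{T}{T+\avq}{k}}{x}$, because any $T$-proof $p$ of $\avu^k$ is also a $(T+\avq)$-proof of $\avu^k$ (the formalised proof predicate is upwards monotone in the axiom set). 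Consequently, $\sonetwo \vdash \forall\psi\,(\Box_{\fint{T}{T}{k}+\avqq}\psi \to \Box_{\fint{T}{T+\avq}{k}+\avqq}\psi)$, and, by necessitation, this implication is available inside any $\Box_\sonetwo$.

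Combining the two ingredients: assume $k: (T+\avq) \rhd (T+\avqq)$ and reason inside $\Box_\sonetwo$ using (a) to get $k: (T+\avq) \rhd (\fint{T}{T+\avq}{k} + \avqq)$. Now if $\Box_{\fint{T}{T}{k}+\avqq}\psi$, then by the monotonicity observation $\Box_{\fint{T}{T+\avq}{k}+\avqq}\psi$, so $\Box_{T+\avq}\psi^k$. This yields $k: (T+\avq) \rhd (\fint{T}{T}{k} + \avqq)$ inside the $\Box_\sonetwo$, which is statement (b).

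The only slightly delicate point is the monotonicity step: it must really be a $\sonetwo$-verifiable $\Delta_1^{\sf b}$-fact about codes of proofs, not a model-theoretic comment, so that it can be pulled under the box via necessitation. Given the convention that $(T+\avq)$-axioms are defined so as to properly extend the $T$-axioms and that ${\sf proof}$ checks each assumption-labelled line against the current axiom predicate, this is straightforward; no use of collection or \expo{} is needed, mirroring the careful Craig-trick bookkeeping already used in Lemma~\ref{lemm:boxextensio} and Lemma~\ref{lemm:bijnap}.
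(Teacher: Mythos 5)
Your proposal is correct and follows essentially the same route as the paper: part (a) is the instantiation of Theorem~\ref{theorem:PLikePrincipleAtL:general} with $U := T+\avq$, $V := T$, $\avu := \avqq$, and part (b) follows because $\fint{T}{T+\avq}{k}$ extends $\fint{T}{T}{k}$ (any $T$-proof of $\avu^k$ being a $(T+\avq)$-proof of $\avu^k$). Your additional care in noting that this monotonicity is a $\sonetwo$-verifiable fact that survives necessitation is a fair elaboration of what the paper leaves implicit with ``clearly''.
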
 

\begin{proof}
For (a), we apply Theorem~\ref{theorem:PLikePrincipleAtL:general} to $T+\avq$ in the role of $U$, $T$ in the role of $V$, and $\avqq$ in the role of $\avu$. 
Claim (b) follows from (a), since, clearly, $\fint{T}{T+\avq}{k}$ extends $\fint{T}{T}{k}$
\end{proof}


\subsection{Iterated approximations}

We will need to apply our technique of approximating theories to theories that 
themselves are already approximations\footnote{An example can be found in the proof of Lemma \ref{theorem:BroadHierarchyLemma}.}.  
To this end we generalise the definition of approximated theories to sequences of interpretations as follows.

\begin{definition}
Let $V^{[\la U, k \ra]} := V^{[U, k]}$. 
We recursively define  \[
    V^{[\la U_0, k_0 \ra, \dots, \la U_{n}, k_{n} \ra, \la U_{n + 1}, k_{n +1} \ra]}
\] for $n \geq 0$ to stand for ${\left( V^{[\la U_0, k_0 \ra, \dots, \la U_{n}, k_{n} \ra]} \right)} ^{ [U_{n + 1}, k_{n + 1} ] }$, i.e.:
\begin{align*}
    \axioms{  V^{[\la U_0, k_0 \ra, \dots, \la U_{n}, k_{n} \ra, \la U_{n + 1}, k_{n +1} \ra]}  }{x} \ & \eqbydef  \\
 (\exists\, p, \avu \bles x)\ \Big( & x = \ulcorner \avu \wedge (\dot{p} = \dot{p}) \urcorner \ \wedge \\
 & \axioms{  V^{[\la U_0, k_0 \ra, \dots, \la U_{n}, k_{n} \ra]} }{\avu} \ \wedge  \\ & \bewijs{U_{n + 1}}{p,\trans{\avu}{k_{n + 1} }} \Big).
\end{align*}
If $x$ denotes a finite sequence $\la U_0, k_0 \ra, \dots, \la U_n, k_n \ra$, then we understand $V^{[x,  \la U_{n + 1}, k_{n +1} \ra]}$ 
as $V^{[\la  U_0, k_0 \ra, \dots, \la U_n, k_n \ra,  \la U_{n + 1}, k_{n +1} \ra]}$.
\end{definition}

Theorem \ref{theorem:PLikePrincipleAtL}(a) can be adapted to this new setting, so that we get the following.

\begin{lemma}  \label{lemm:GeneralisationOftheorem:FinalPstyleTheorem}
Let $T$ be a  $ \Delta_1^{\sf b}$-axiomatised theory
and let $\alpha$ and $\beta$ be $T$-sentences. 
Let the variable  $x$ range over codes o sequences of pairs $\la U_i, k_i\ra$. We have: 
 \[
 \sonetwo\vdash (\forall x)(\forall k)\, ( k: (T + \avq) \rhd (T^{[x]} + \avqq)
        \to \Box_\sonetwo( (T + \avq) \rhd T^{[x, \la T + \avq, k \ra ]} + \avqq)).
        \]
\end{lemma}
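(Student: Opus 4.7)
The plan is to mimic the proof of Theorem~\ref{theorem:PLikePrincipleAtL:general} with $U = T + \avq$, $V = T^{[x]}$, and $\avu = \avqq$, uniformly in the parameter $x$. By the definition of iterated approximations, $\fint{T^{[x]}}{T+\avq}{k} = T^{[x, \la T + \avq, k \ra]}$, so this specialisation lands precisely at the desired conclusion. The only thing one must check is uniformity in $x$, which is unproblematic because the axiom set of $T^{[x]}$ is uniformly $\Delta_1^{\sf b}$ in~$x$ by its very definition.

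Concretely, I reason in \sonetwo{} with $x$ and $k$ fixed, and assume $k: (T+\avq) \rhd (T^{[x]} + \avqq)$. Since $\avqq$ is trivially provable in $T^{[x]} + \avqq$, the interpretation yields $\Box_{T+\avq}\,\avqq^k$, which is $\exists\Sigma^{\sf b}_1$; so by formalised $\exists\Sigma^{\sf b}_1$-completeness, $\Box_\sonetwo \Box_{T+\avq}\,\avqq^k$. In parallel, Lemma~\ref{lemm:bijnap}, applied with $T^{[x]}$ in the role of $V$, gives $k: (T+\avq) \rhd \fint{T^{[x]}}{T+\avq}{k}$, i.e. $k: (T+\avq) \rhd T^{[x, \la T+\avq, k\ra]}$; by necessitation this too holds under $\Box_\sonetwo$.

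Reasoning now inside $\Box_\sonetwo$, I combine these two facts exactly as in the proof of Theorem~\ref{theorem:PLikePrincipleAtL:general}. Take any $B$ with $\Box_{T^{[x, \la T+\avq, k\ra]}+\avqq}\,B$; then $\Box_{T^{[x, \la T+\avq, k\ra]}}(\avqq \to B)$, and the interpretability of $T^{[x, \la T+\avq, k\ra]}$ in $T+\avq$ via $k$ gives $\Box_{T+\avq}(\avqq^k \to B^k)$; together with $\Box_{T+\avq}\,\avqq^k$ this yields $\Box_{T+\avq}\,B^k$. Hence $k:(T+\avq) \rhd (T^{[x, \la T+\avq, k\ra]}+\avqq)$, and in particular $(T+\avq) \rhd (T^{[x, \la T+\avq, k\ra]}+\avqq)$. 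Stepping back out of $\Box_\sonetwo$ gives the conclusion.

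The only genuine obstacle is bookkeeping to ensure everything is formalised uniformly in~$x$: one must verify that the $\exists\Sigma^{\sf b}_1$-completeness step and the application of Lemma~\ref{lemm:bijnap} go through with the variable theory $T^{[x]}$ rather than a fixed meta-level theory. Because the proofs of both earlier results depend on $V$ only through a $\Delta_1^{\sf b}$-formula for its axiom set, and the axiom set of $T^{[x]}$ is obtained by a fixed $\Delta_1^{\sf b}$-schema applied to $x$, this is a routine verification and carries no arithmetical content beyond what was already in Lemma~\ref{lemm:bijnap} and Theorem~\ref{theorem:PLikePrincipleAtL:general}.
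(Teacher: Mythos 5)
Your proposal is correct and takes essentially the same route as the paper: the paper's own proof is a one-liner declaring the lemma immediate from Theorem~\ref{theorem:PLikePrincipleAtL:general} (with $U = T+\avq$, $V = T^{[x]}$), noting that the parameter $x$ does not affect that proof. You simply unfold that specialisation explicitly, including the identification $\fint{T^{[x]}}{T+\avq}{k} = T^{[x,\la T+\avq,k\ra]}$ and the uniformity-in-$x$ check, which is exactly the content the paper leaves implicit.
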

\begin{proof}
       This is immediate from Theorem~\ref{theorem:PLikePrincipleAtL:general}, noting that the parameter $x$ does not affect the proof of that theorem.
\end{proof}

Again, it seems that there is no need to keep track of the formulas $\gamma$ in the \fint{T}{T+\gamma}{k} definition. 
Therefore, we shall, in the sequel,
simply work with sequences of interpretations of $T$ in $T$ rather than sequences of pairs of theory and interpretation.
The corresponding definition is as follows where $\la \ra$ denotes the empty sequence and for a sequence $x$, we use $x \star k$ or 
sometimes simply $x,k$ to denote the concatenation of $x$ with $\la k \ra$.
\begin{definition}
For $T$ a $\Delta_1^{\sf b}$-axiomatised theory we define $T^{[\la \ra]} := T$ and $T^{[x \star k]} := \left ( T^{[x]} \right )^{[T, k]}$.
\end{definition}
From now on, we shall write $T^{[k]}$ instead of $T^{[\la k\ra]}$. With the simplified notion of iteration we can formulate a friendlier \principle{P}-flavoured principle.

\begin{theorem}  \label{theorem:FinalPstyleTheorem}
Let $T$ be a  $ \Delta_1^{\sf b}$-axiomatised theory
 and let $\alpha$ and $\beta$ be $T$-sentences. 
Let $x$ range over sequences of interpretations. We have:
 \[
 \sonetwo\vdash  (\forall x)(\forall  k)\,\Bigl( (T + \alpha) \rhd (T^{[x]} + \beta)
        \to \Box_\sonetwo \,\bigl (k: ( T + \alpha) \rhd ( T^{[x, k ]} + \beta)\bigr )\Bigr ).
        \]
\end{theorem}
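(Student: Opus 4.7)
The plan is to derive this theorem from Lemma~\ref{lemm:GeneralisationOftheorem:FinalPstyleTheorem} by (i) translating the simplified iteration notation into the extended one and (ii) weakening the target to a subtheory. Unfolding, $T^{[x,k]} = (T^{[x]})^{[T,k]}$, which in the extended notation of the previous subsection corresponds to $T^{[y, \la T, k\ra]}$, where $y$ is the pair-valued rendering of $x$ (each simplified-notation interpretation $k_i$ paired with $T$). By contrast, Lemma~\ref{lemm:GeneralisationOftheorem:FinalPstyleTheorem}, applied with $T+\alpha$ in the role of the ambient theory at the tail, yields the approximation indexed by $\la T+\alpha, k\ra$ rather than by $\la T, k\ra$. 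The key observation bridging these is that any $T$-proof is a $T+\alpha$-proof, so by direct inspection of the axiom sets in Definition~\ref{defi:finapr}, $T^{[y, \la T, k\ra]}$ is a subtheory of $T^{[y, \la T+\alpha, k\ra]}$. This inclusion is $\Delta^{\sf b}_1$ and readily verifiable in \sonetwo.

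I then reason in \sonetwo. Fix $x$ and $k$, and read the antecedent as $k$ being the witnessing interpretation for $(T+\alpha) \rhd (T^{[x]}+\beta)$, in line with the conventions of Theorem~\ref{theorem:PLikePrincipleAtL:general} and Lemma~\ref{lemm:GeneralisationOftheorem:FinalPstyleTheorem}. Applying the cited lemma in the strengthened form where $k$ is retained as witness on the right-hand side---this is what the proof of Theorem~\ref{theorem:PLikePrincipleAtL:general} actually delivers, and the parameter $x$ does not affect that argument---I obtain
\[
\Box_\sonetwo\bigl(k: (T+\alpha) \rhd (T^{[y, \la T+\alpha, k\ra]}+\beta)\bigr).
\]
Working inside the $\Box_\sonetwo$, interpretability $k: V \rhd V'$ trivially passes, via the same interpretation $k$, to any subtheory of $V'$; combined with the subtheory inclusion $T^{[x,k]}+\beta \subseteq T^{[y, \la T+\alpha, k\ra]}+\beta$ established above, this yields $k: (T+\alpha) \rhd (T^{[x,k]}+\beta)$. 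Discharging the box gives the desired consequent.

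The main obstacle is purely bookkeeping: aligning the simplified notation $T^{[x,k]}$ with the correct extended expression $T^{[y, \la T, k\ra]}$, and making the subtheory weakening step (replacing $\la T+\alpha, k\ra$ by $\la T, k\ra$ at the tail of the approximation sequence) explicit and formalised inside $\sonetwo$. Once this is in place, the theorem is essentially a corollary of Lemma~\ref{lemm:GeneralisationOftheorem:FinalPstyleTheorem}.
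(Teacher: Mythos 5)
Your proof is correct and follows essentially the route the paper intends: the theorem is presented as an immediate consequence of Lemma~\ref{lemm:GeneralisationOftheorem:FinalPstyleTheorem}, and your two explicit bookkeeping steps --- reading $k$ as the witness of the antecedent, and passing from the tail entry $\la T+\alpha,k\ra$ to $\la T,k\ra$ via the subtheory inclusion --- are exactly the same move the paper makes in deriving Theorem~\ref{theorem:PLikePrincipleAtL}(b) from (a) (``$\fint{T}{T+\avq}{k}$ extends $\fint{T}{T}{k}$'').
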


\section{A modal logic for approximation}

In this section, we will present a modal logical system to reason about interpretations and approximations based on them.

\subsection{The logic \atl}

We proceed to articulate modal 
principles reflecting facts about approximations. The main idea is to label our modalities with sequences $\svo$ of 
interpretation variables. Of course, in the arithmetical part, these sequences $\svo$ will indeed be interpreted via 
some map $\kappa$ as a sequence $\kappa (\svo)$ of translations from the language of $T$ to the language of $T$. 
In the next subsection we shall make the arithmetical reading precise but the idea is that $A \rhd^\svo B$ will stand for 
$T+ \alpha \rhd T^{[\kappa(\svo)]} + \beta$, whenever $A$ is interpreted by the $T$-sentence $\alpha$ and $B$ by $\beta$. 
Likewise, $\Box^\svo A$ will be interpreted as $\Box_{T^{[\kappa(\svo)]}}\alpha$. In the next section, we will see how we 
can avoid nonsensical interpretations $k$ so that the theories $T^k$ will always contain a minimum of arithmetic.

As in \cite{JoostenVisser:2004:Toolkit}, we will call our modal system \atl even though the system presented here slightly deviates from the one in \cite{JoostenVisser:2004:Toolkit}.
 We first specify the language. We have propositional variables
 $p_0,p_1,p_2,\ldots$. We will use $p,q,r,\ldots$ to range over them. Moreover, we have interpretation
 variables $k_0,k_1,k_2,\ldots$. We have one interpretation
 constant {\sf id}. 
 The meta-variables $k,\ell,m,\ldots$ will range over the interpretation
 terms (i.e.\ interpretation variables and {\sf id}). 
 The meta-variables $\svo, \svt, \svtr, \ldots$ will range over finite sequences of interpretation variables.\forlater{\albnote{Including the empty sequence?
 The clauses below suggest not.}
 \luknote{We need empty sequences for some of our current formulations at least. Wouldn't the clause be equivalent to saying this for the empty sequence case: If $A \rhd B$ is in the language, ... $A \rhd^k B$ is ...}}

 The modal language of \atl is the smallest language containing the propositional variables,
 closed under the propositional connectives, including $\top$ and $\bot$, and, given an interpretation term $k$, the modal operators $\Box^k$ and $\rhd^k$,
  and closed under
 the following rule.
 \begin{itemize}
 \item
 If $A \rhd^\svo B$ is in the language and $k$ is an interpretation term not contained in $\svo$, then  $A\rhd^{\svo, k}B$\/ is in the language. Similarly, for $\Box^{\svo, k} A$.
 \end{itemize}
We let $\Diamond^\svo A$ abbreviate $\neg\, \Box^\svo \neg\, A$. 
 We write $\rhd$ for $\rhd^{{\sf id}}$, and analogously for $\Box$ and $\Diamond$. The logic \atl~has axioms $\vdash A$ for any propositional logical tautology $A$ in the extended language. Moreover, \atl~has the obvious interchange rules to govern interaction between both sides of the turnstyle $\vdash$ based on the deduction theorem so that $\varDelta, \Gamma \vdash C \ \Leftrightarrow \varDelta \vdash \bigwedge \Gamma \to C $.
Apart from modus ponens, \atl~has the following axioms and rules. 
 
\[
\begin{array}{ll}
\finprin{L_1}{\svo} & \vdash \finmod{\Box}{\svo} (A \rightarrow B) 
\rightarrow (\finmod{\Box}{\svo} A \rightarrow \finmod{\Box}{\svo} B) \\
\finprin{L_2}{\svo, \svt} &  \vdash\finmod{\Box}{y} A \rightarrow \finmod{\Box}{\svo}
\finmod{\Box}{svt} A  \\
\finprin{L_3}{\svo} & \vdash \finmod{\Box}{\svo} 
(\finmod{\Box}{\svo} A \rightarrow A) \rightarrow 
\finmod{\Box}{\svo} A  \\\\
\finprin{J_1}{\svo} &  \vdash
\finmod{\Box}{\svo} (A \rightarrow B) \rightarrow A \finmod{\rhd}{\svo} B \\
\finprin{J_2}{\svo}{\sf a} &  \vdash (A \rhd B ) \wedge (B \finmod{\rhd}{\svo} C)
\rightarrow A \finmod{\rhd}{\svo} C \\
 \finprin{J_2}{\svo}{\sf b} &  \vdash (A \finmod{\rhd}{\svo} B ) \wedge \Box^{\svo}(B \to C)
\rightarrow A \finmod{\rhd}{\svo} C  \\
 %
\finprin{J_3}{\svo} &  \vdash (A \finmod{\rhd}{\svo} C) \wedge (B \finmod{\rhd}{\svo}C)
\rightarrow  {A\vee B}\, \finmod{\rhd}{\svo} C \\
\finprin{J_4}{\svo} &  \vdash
 A \finmod{\rhd}{\svo} B \rightarrow (\Diamond A \rightarrow \finmod{\Diamond}{\svo} B)  \\
%
%
\finprin{J_5}{\svo, \svt} &  \vdash
A\rhd^{\svo}\Diamond^{\svt}B \to A\rhd^{\svt}B \\\\
\finimpl{\svo,k} & \vdash \finmod{\Box}{\svo,k} A \rightarrow \finmod{\Box}{\svo} A \\
\finrhdimpl{\svo,k} &   \vdash A\finmod{\rhd}{\svo} B \rightarrow 
A \finmod{\rhd}{\svo,k} B  \\\\
\finprin{Nec}{\svo} & {\vdash A} \To {\vdash \Box^{\svo} A}\\
\finprin{P}{\svo, \svt, k} & 
{\Gamma,\varDelta,  \Box^\svt (A \rhd^{\svo, k} B)\vdash C} \To 
{\Gamma, A\rhd^\svo B \vdash C}  \\
\end{array}
\]

\noindent In the above, the rule  \finprin{P}{\svo, \svt, k} is subject to the following conditions:
\begin{enumerate}
\item $k$ is an interpretation variable;
\item $k$ does not occur in $\svo, \Gamma, A,B,C$;
\item $\varDelta$ consists of formulas of the form 
${E\rhd^{\svo, k} F} \to {E\rhd^\svo F}$ and\\ $\Box^\svo E\to \Box^{\svo, k} E$.
\end{enumerate} 

\subsection{Basic observations on \atl}

The first group of axioms $\finprin{L_1}{\svo}$-$\finprin{L_3}{\svo}$ express the straightforward generalisation of the regular provability axioms. 
The second group of axioms $\finprin{J_1}{\svo}$-$\finprin{J_5}{\svo, \svt}$ are 
the straightforward generalisations of the interpretability axioms. In particular, 
taking all interpretations to be the identity we retrieve all the regular axioms.

The third group of axioms tells us how we can vary the interpretability parameters. The Necessitation rule is as usual, and the $\finprin{P}{\svo, \svt, k}$ encodes the essential behaviour of approximations.

The following derivation shows how the $\finprin{P}{\svo, \svt, k}$ rules implies the axiom version CHANGE THIS WORD:
\[
\cfrac{\cfrac{\Box (A \rhd^{\svo, k} B)\vdash \Box (A \rhd^{\svo, k} B)}
{A\rhd^\svo B \vdash \Box (A \rhd^{\svo, k} B)}}{\vdash (A\rhd^\svo B) \to \Box (A \rhd^{\svo, k} B).}
\]

The use of a \principle{P}-flavoured rule instead of an axiom is suggested since it better allocates flexibility in collecting all applications of Lemma \ref{lemm:boxextensio} and Corollary \ref{cor:boxextensioProvability} in our reasoning. To be on the safe side, we consider that \atl~is presented using multi-sets so that we can allocate for applications of Lemma \ref{lemm:boxextensio} and Corollary \ref{cor:boxextensioProvability} \emph{after} a \finprin{P}{\svo, \svt, k}~rule is applied. Often we shall not mention all parameters of an axiom and, for example, just speak of the \finprin{P}{k}~rule instead of the \finprin{P}{\svo, \svt, k}~rule.

From Section \ref{setc:principles} onwards we shall put the logic \atl to work. Rather than giving formal proofs as a sequence of turnstyle statements we will describe such formal proofs. In doing so, we will call the licence to use 
$\Box^\svo E\to \Box^{\svo, k} E$ provided by \finprin{P}{\svo, \svt, k}:
\finboxext{k}, and we will call
    \footnote{A possible strengthening of \finprin{P}{\svo, \svt, k} is: \[
        {\Gamma,\varDelta,  \verz{\Box^\svt (A_i \rhd^{\svo,k} B_i \mid i<n+1)}\vdash C} \Rightarrow
            {\Gamma, \verz{A_i\rhd^\svo B_i\mid i<n+1} \vdash C}
    \]
    %
     putting  the obvious conditions on occurrences of $k$ and on $\varDelta$. We will not consider this strengthening in the paper.}
 the licence to use 
${E\rhd^{\svo, k} F} \to {E\rhd^\svo F}$:
\finintext{k}.

We observe that by taking the empty sequence we get various special cases of our axioms. For example, a special case of $\finimpl{x,k}$ would be $\vdash \finmod{\Box}{k} A \rightarrow \finmod{\Box}{} A$. Furthermore, successive applications of $\finimpl{\svo,k}$ yield $\vdash \finmod{\Box}{\svo} A \rightarrow \finmod{\Box}{} A$. Likewise, a special case of $\finrhdimpl{\svo,k}$ gives us $\vdash A\rhd B \rightarrow A \finmod{\rhd}{k} B$.


We observe that repeatedly applying $\finimpl{\svo,k}$ yields a generalisation of $\finprin{J_2}{\svo}{\sf b}$:~$(A \finmod{\rhd}{\svo} B ) \wedge \Box^{\svo\star y}(B \to C)
\rightarrow A \finmod{\rhd}{\svo} C$.

Furthermore, we observe that $\finprin{J_1}{\svo}$ follows from the classical  \principle{J_1} principle since
\[
\begin{array}{lll}
\finmod{\Box}{\svo}(A \rightarrow B) & \rightarrow & \Box (A \rightarrow B)\\
 & \rightarrow & A \rhd B\\
 & \rightarrow &  A \finmod{\rhd}{\svo} B.\\
\end{array}
\]

    We also observe that, if we drop the superscripts in \finprin{J_5}{\svo, \svt},
    we get the formula $A\rhd \Diamond B \ \to \ A\rhd B$ that is equivalent over \prin{J_1}, \prin{J_2} to the ordinary version $\Diamond A \rhd A$
    of \prin{J_5} as we saw in Lemma \ref{lemma:equivalentOfJ5}.

As a first and simple derivation in our system we have the following strengthening of the principle \principle{P_0}  in \atl~
    (recall that \principle{P_0} is the scheme $A \rhd \Diamond B \to \Box(A \rhd B)$).

    \begin{lemma}
    \label{gen_p0}
Let $\svo$ and $\svt$ be  arbitrary sequences of interpretations.
Let  $\varDelta$ consist of formulas of the form 
${E\rhd^{\svo, k} F} \to {E\rhd^{\svo}F}$ and $\Box^{\svo}E\to \Box^{\svo, k} E$ for some $k$ that does not occur in 
$\svo, \Gamma, A, B, C$.
We have the following rule to be derivable over \atl:
    \[
        {\Gamma,\varDelta,  \Box^\svt (A \rhd B)\vdash C} \To 
        {\Gamma, A\rhd^\svo \Diamond B \vdash C}
    \]
\end{lemma}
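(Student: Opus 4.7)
The plan is to apply the rule \finprin{P}{\svo,\svt,k}, with $k$ taken to be the same fresh interpretation variable occurring in $\varDelta$, to the sequent $\Gamma, A\rhd^\svo\Diamond B\vdash C$. Instantiating the rule with $A$ and $\Diamond B$ in the roles of its distinguished formulas reduces the goal to deriving
\[
\Gamma,\varDelta,\Box^\svt(A\rhd^{\svo,k}\Diamond B)\vdash C.
\]
Since the lemma's hypothesis gives $\Gamma,\varDelta,\Box^\svt(A\rhd B)\vdash C$, it then suffices to show that $\Box^\svt(A\rhd^{\svo,k}\Diamond B)$ implies $\Box^\svt(A\rhd B)$ over \atl.

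To obtain this box-implication, I first note that the instance of \prin{J_5} whose outer label is $\svo,k$ and whose inner label is taken to be the empty sequence (equivalently ${\sf id}$) yields the propositional validity $\vdash A\rhd^{\svo,k}\Diamond B\to A\rhd B$; this is precisely the generalised \prin{J_5} that, in the classical case, packages \principle{J_5} together with the transitivity \prin{J_2} into one step. A single application of \finprin{Nec}{\svt} followed by distribution via $\finprin{L_1}{\svt}$ then gives $\vdash \Box^\svt(A\rhd^{\svo,k}\Diamond B)\to \Box^\svt(A\rhd B)$, so combining with the hypothesis yields the reformulated goal.

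A final application of \finprin{P}{\svo,\svt,k} then produces $\Gamma, A\rhd^\svo\Diamond B\vdash C$. The freshness condition on $k$ (namely that $k$ does not occur in $\svo, \Gamma, A, B, C$) is exactly the one assumed in the lemma's statement, so the rule is licensed; note also that $k$ does not occur in $\Diamond B$ either, since it does not occur in $B$. The only point requiring a convention rather than an explicit axiom is the empty-label instance of \prin{J_5}, which is compatible with the identification of the empty sequence of interpretation variables with ${\sf id}$ used throughout the paper (so that $\rhd^{\sf id}=\rhd$ and $\Diamond^{\sf id}=\Diamond$). I do not expect any genuine obstacle here; essentially, this is the \principle{P_0} argument transported into the tweaked-axioms setting of \atl, with \finprin{P}{\svo,\svt,k} replacing the classical \principle{P} axiom.
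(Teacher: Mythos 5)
Your proof is correct and follows essentially the same route as the paper's: instantiate $\finprin{J_5}{\svo, k, {\sf id}}$ to get $\vdash A\rhd^{\svo,k}\Diamond B \to A\rhd B$, push this under $\Box^\svt$ to rewrite the hypothesis sequent, and close with one application of $\finprin{P}{\svo,\svt,k}$. The only difference is presentational (you work backwards from the goal and spell out the necessitation/distribution step that the paper leaves implicit).
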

\begin{proof}
We assume ${\Gamma,\varDelta,  \Box^\svt (A \rhd B)\vdash C}$.
By \finprin{J_5}{\svo, k, {\sf id}} we know that \[\vdash A \rhd^{\svo, k} \Diamond B \to A\rhd B,\] so that we get  
${\Gamma,\varDelta,  \Box^\svt (A \rhd^{\svo, k} \Diamond B)\vdash C}$. An application of \finprin{P}{\svo, 
\svt, k} yields the required ${\Gamma, A\rhd^\svo \Diamond B \vdash C}$.
    \end{proof}

\section{Arithmetical semantics}

In order to set up arithmetical semantics, we would like to quantify over sensible translations. For example, a translation should at least map a minimum of arithmetic to provable sentences. However, how are we to separate the sensible from the non-sensical translations? In the first subsection we shall provide a construction to guarantee that we only use sensible translations. Then we shall define arithmetical semantics and prove a soundness theorem.

\subsection{A further modification of translations}\label{brutalesmurf}

To do interpretability logic we need that we have sufficient coding possibilities in each theory we consider. Suppose we already have a theory with coding and translation $k$
from the signature of $T$ to the signature of $T$. We want to insure that $T^{[T,k]}$ also has coding.  To do this we simply have to produce an improved version of
the modification trick we introduced in Section~\ref{voetbalsmurf}.

We fix our base theory $T$ of signature $\Theta$. Our coding will always be implemented via an interpretation of \sonetwo in $T$. We also fix such an interpretation, say $N$.
 Let $\alpha^\star$ be a conjunction of $T$-axioms that implies both $\mathfrak{id}_{\mathfrak A}$, where $\mathfrak A$ is the signature of arithmetic, and
 $(\sonetwo)^N$. We fix $\alpha^\star$ for the remainder of this section.

We can now specify our standard modification. Define, for  any translation $k$  of the signature of $T$ to the signature of $T$, the
disjunctive interpretation ${\sf n}(k)$ that is $k$ if $(\alpha^\star)^k$ and $ \mathsf{id}_\Theta$, otherwise.
Over predicate logic, we have, by a simple induction, that, for any $\Theta$-sentence $\varphi$,
\begin{equation}\label{equation:skCanBeSimplified}
\varphi^{\mathsf{n}(k)} \ \leftrightarrow \    \Big( (\alpha^\star)^k \wedge \varphi^{k}  \Big)
    \vee
    \Big( \neg (\alpha^\star)^k \wedge \varphi \Big).
\end{equation}
Since the needed induction to prove this is on the \emph{length} of $\varphi$ and since the proof can be uniformly constructed in p-time from $\varphi$, we have access to \eqref{equation:skCanBeSimplified} when reasoning inside $\sonetwo$.

We observe that, 
for example, in the formula $\exists k\, \Box_U \phi^{ \mathsf s(k) }$,
    the choice of whether $\mathsf s(k)$ will be equivalent to $\mathsf{id}_\Theta$ or to $k$
    will depend on whether $(\alpha^\star)^k$ holds under the $\Box_U$.
    In contrast, in the expression 
   $\exists k\, \Box_{U^{[\mathsf{s}(k)]}} \varphi$, the nature of $U^{[\mathsf{s}(k)]}$ depends on whether $(\alpha^\star)^k$ holds outside the box.
    
    Let us proceed by making some easy observations on $\mathsf{n}(k)$. In the following lemma, 
    we start by observing that regardless of the nature of $k$, the derived $\mathsf{s}(k)$ provides us 
    an interpretation of $\alpha^\star$ in $T$. Next, we see that any other interpretation of 
    $\alpha^\star$ in $T$ will also occur as an image of $\mathsf{s}$. Thus, modulo $T$-provable  equivalence, ${\sf n}(k)$ ranges precisely over
all interpretations of $\alpha^\star$.

\begin{lemma} \label{lemm:sk-props}
 We have, verifiably in \sonetwo, that, for all good translations $k$ and $j$,
 \begin{enumerate}
 \item
 $T\vdash (\alpha^\star)^{{\sf n}(k)}$,
 \item
 for any formula $\phi$ we have $T\vdash (\alpha^\star)^k \to (\phi^{{\sf n}(k)} \leftrightarrow \phi^k)$,
 \item 
 $T\vdash  \phi^{{\sf n}({\sf id}_\Theta)} \leftrightarrow \phi$\\ \textup(i.o.w., ${\sf n}({\sf id}_\Theta)$ is $T$-equivalent to ${\sf id}_\Theta$\textup),
 \item 
 $T\vdash  \phi^{{\sf n}(k\circ j)} \leftrightarrow \phi^{{\sf n}(k)\mathop{\circ} {\sf n}(j)}$\\
 \textup(i.o.w., {\sf n} $T$-provably commutes with composition of translations\textup).
 \end{enumerate}
\end{lemma}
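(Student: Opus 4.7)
The entire lemma rests on the propositional characterisation \eqref{equation:skCanBeSimplified} of ${\sf n}(k)$ that was just derived; I would use it uniformly as the only unfolding rule for the disjunctive interpretation, and then treat each item as a short case analysis.

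For item (1), I would split on whether $(\alpha^\star)^k$ holds in the relevant model. Applying \eqref{equation:skCanBeSimplified} to $\varphi := \alpha^\star$, the statement $(\alpha^\star)^{{\sf n}(k)}$ unfolds to $\bigl((\alpha^\star)^k \wedge (\alpha^\star)^k\bigr) \vee \bigl(\neg (\alpha^\star)^k \wedge \alpha^\star\bigr)$. Since $\alpha^\star$ is a conjunction of $T$-axioms, the second disjunct collapses to the negation of $(\alpha^\star)^k$, so the whole disjunction is $T$-provably equivalent to a tautology. Item (2) is then immediate from \eqref{equation:skCanBeSimplified}: under the hypothesis $(\alpha^\star)^k$, the second disjunct is vacuous and only $\varphi^k$ survives. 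Item (3) reduces to (2): since $T\vdash \alpha^\star$, and since $(\alpha^\star)^{\mathsf{id}_\Theta}$ is literally $\alpha^\star$, we have $T\vdash (\alpha^\star)^{\mathsf{id}_\Theta}$, so (2) applied with $k := \mathsf{id}_\Theta$ gives $\varphi^{{\sf n}(\mathsf{id}_\Theta)} \leftrightarrow \varphi^{\mathsf{id}_\Theta}$, and the latter is $\varphi$.

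Item (4) is the real obstacle and is where I would spend essentially all of the work. The plan is a four-way case distinction, done inside $T$, on the truth of the two ``guards'' $(\alpha^\star)^j$ and $\bigl((\alpha^\star)^k\bigr)^{{\sf n}(j)}$ (equivalently, on whether ${\sf n}(j)$ and ${\sf n}(k)$ ``choose'' their nontrivial or trivial branch). First I would verify a small algebraic identity: $\varphi^{{\sf n}(k)\circ {\sf n}(j)}$ equals $(\varphi^{{\sf n}(k)})^{{\sf n}(j)}$ by definition of composition of translations, so two applications of \eqref{equation:skCanBeSimplified} rewrite it as a nested disjunction whose guards are exactly $(\alpha^\star)^j$ and $\bigl((\alpha^\star)^k\bigr)^{{\sf n}(j)}$. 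On the other side, \eqref{equation:skCanBeSimplified} applied once to $k\circ j$ rewrites $\varphi^{{\sf n}(k\circ j)}$ with guard $(\alpha^\star)^{k\circ j} = \bigl((\alpha^\star)^k\bigr)^j$. In the case $(\alpha^\star)^j$ holds, ${\sf n}(j)$ coincides with $j$ on all formulas by (2), so the two guards agree and the two bodies become $\varphi^{k\circ j}$ and $\varphi$ respectively, matching the ${\sf n}(k\circ j)$ side. In the case $\neg (\alpha^\star)^j$, then ${\sf n}(j)$ collapses to $\mathsf{id}_\Theta$ by \eqref{equation:skCanBeSimplified}, and the outer guard $\bigl((\alpha^\star)^k\bigr)^{{\sf n}(j)}$ becomes $(\alpha^\star)^k$; moreover one checks that $\neg (\alpha^\star)^j$ also forces $\neg(\alpha^\star)^{k\circ j}$ (since otherwise $(\alpha^\star)^j$ would hold as a logical consequence through $k$), so both sides reduce to $\varphi$.

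The hard part is the bookkeeping in the last case: making sure that $\neg (\alpha^\star)^j$ on the one hand, and $\neg(\alpha^\star)^{k\circ j}$ on the other hand, remain coherent after the substitutions and do not accidentally depend on unverified properties of $k$. For this I would lean on the fact that $\alpha^\star$ is a conjunction of $T$-axioms strong enough to imply $\mathfrak{id}_{\mathfrak A}$ and $(\sonetwo)^N$, so that the relevant implications needed inside the case split are already provable in $T$, and then verify in $\sonetwo$ that the whole equivalence is obtained uniformly in $\varphi$ by a $\sonetwo$-provable induction on the length of $\varphi$, in the same style as \eqref{equation:skCanBeSimplified} itself.
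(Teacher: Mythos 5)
Items (1)--(3) of your proposal are correct and match the paper's (very terse) argument: (1) is the case split on $(\alpha^\star)^k$ combined with $T\vdash\alpha^\star$, (2) is read off directly from \eqref{equation:skCanBeSimplified}, and (3) follows by specialising (2) to ${\sf id}_\Theta$.

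Your treatment of item (4), however, does not go through, and the reason is that you never use the hypothesis that $k$ and $j$ are \emph{good} (i.e.\ in the range of $\sf n$) --- without it the identity is simply false, so a case analysis that allows the guards to fail cannot succeed. Concretely, with your reading $\varphi^{{\sf n}(k)\circ{\sf n}(j)}=(\varphi^{{\sf n}(k)})^{{\sf n}(j)}$: in the case $(\alpha^\star)^j$, the second body of the unfolded right-hand side is $\varphi^{{\sf n}(j)}$, which under that case hypothesis is $\varphi^{j}$, \emph{not} $\varphi$; so in the sub-case $\neg(\alpha^\star)^{k\circ j}$ the two sides reduce to $\varphi^{j}$ and $\varphi$, which need not agree. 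In the case $\neg(\alpha^\star)^j$, your claim that this forces $\neg(\alpha^\star)^{k\circ j}$ is unjustified: $(\alpha^\star)^{k\circ j}$ is $((\alpha^\star)^k)^j$, and $(\alpha^\star)^{k}$ does not logically entail $\alpha^\star$, so there is no ``consequence through $k$''; and even granting it, the right-hand side collapses in that case to $\varphi^{{\sf n}(k)}$, which is $\varphi^{k}$ when $(\alpha^\star)^k$ holds, again not $\varphi$. The intended easy argument runs the other way: goodness gives $T\vdash(\alpha^\star)^{k}$ and $T\vdash(\alpha^\star)^{j}$; since $\alpha^\star$ proves $(\alpha^\star)^{{\sf n}(k')}$ in predicate logic for any $k'$, translating by $j$ yields $T\vdash((\alpha^\star)^{k})^{j}$, i.e.\ $T\vdash(\alpha^\star)^{k\circ j}$, and similarly $T\vdash((\alpha^\star)^{k})^{{\sf n}(j)}$. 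Hence all three disjunctive interpretations $T$-provably take their first branches, and both sides of (4) are $T$-equivalent to $(\varphi^{k})^{j}=\varphi^{k\circ j}$, with no case distinction needed.
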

\begin{proof}
    Let us prove the first claim.
    Reason in $\sonetwo$ and let $k$ be arbitrary.
    Now reason in $T$ or more formally, under the $\Box_T$.
    We distinguish cases.
    If $(\alpha^\star)^k$, then $\mathsf{s}(k) = k$,
        and $(\alpha^\star)^{k}$ holds by the case assumption.
    Otherwise, $\mathsf{s}(k) = \mathsf{id}_\Theta$.
    The choice of $T$ and $\alpha^\star$ (see beginning of the subsection)
        implies $T\vdash \alpha^\star$, as required.
        
    The second claim is immediate by the induction we already discussed.
    The third and fourth claims are easy.
\end{proof}
 %

We recall that where the lemma mentions the theory $T^{[ {T}, {\sf n}(k) ]}$ we really mean
    the theory axiomatised by 
    \begin{equation} \label{eqn:axTTAsk}
    \begin{aligned}
        \axioms{\fint{T}{{T}}{{\sf n}(k)}}{x}  = \ &
         ( \exists\, p, \varphi \bles x)\ 
        \Big(x= \ulcorner \varphi \wedge (\underline{p}=\underline{p})\urcorner\,
        \wedge \\ 
         & \axioms{T}{\varphi}\wedge \bewijs{{T}}{p,\trans{\varphi}{{\sf n}(k)}} \Big).
     \end{aligned}
    \end{equation}
In this formula we can expand $\trans{\varphi}{{\sf n}(k)}$ as in \eqref{equation:skCanBeSimplified}.

\begin{lemma}\label{gloglop}
$\sonetwo\vdash  \forall k\; \Box_{T^{[ T , {\sf n}(k) ]} }(\sonetwo)^N$. 
\end{lemma}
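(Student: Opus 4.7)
\emph{Proof plan.} The key observation is that the definition of $T^{[T,\mathsf{n}(k)]}$ given in Equation~\eqref{eqn:axTTAsk} makes any $T$-axiom $\varphi$ automatically an axiom of $T^{[T,\mathsf{n}(k)]}$, in the form $\varphi \wedge (\underline p = \underline p)$, as soon as we can exhibit a $T$-proof $p$ of $\varphi^{\mathsf{n}(k)}$. Lemma~\ref{lemm:sk-props}(1) is tailor-made to supply exactly such proofs for the conjuncts of $\alpha^\star$, so the plan is to run $(\sonetwo)^N$ through $\alpha^\star$.

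I would reason inside $\sonetwo$ and fix an arbitrary interpretation~$k$. Write $\alpha^\star$ as the standardly-long conjunction $\varphi_1 \wedge \cdots \wedge \varphi_m$ of $T$-axioms singled out at the beginning of the subsection. Lemma~\ref{lemm:sk-props}(1) gives $\Box_T (\alpha^\star)^{\mathsf{n}(k)}$; since $\alpha^\star$ has a fixed, standard number of conjuncts, a standard-length composition of conjunction-eliminations converts a witnessing $T$-proof of $(\alpha^\star)^{\mathsf{n}(k)}$ into individual $T$-proofs $p_1,\ldots,p_m$ of $\varphi_1^{\mathsf{n}(k)},\ldots,\varphi_m^{\mathsf{n}(k)}$ whose sizes are polynomially bounded in the size of the given proof. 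By \eqref{eqn:axTTAsk}, each $\varphi_i \wedge (\underline{p_i} = \underline{p_i})$ is then an axiom of $T^{[T,\mathsf{n}(k)]}$; so conjunction-elimination followed by $\wedge$-introduction produces a short $T^{[T,\mathsf{n}(k)]}$-proof of $\alpha^\star$. Prepending the fixed predicate-logic derivation of $(\sonetwo)^N$ from $\alpha^\star$ (which exists because $\alpha^\star$ was chosen precisely to imply $(\sonetwo)^N$) yields the required $T^{[T,\mathsf{n}(k)]}$-proof of $(\sonetwo)^N$. The whole construction is uniform and p-time in the pair $(k, p)$, so it formalises inside $\sonetwo$ and delivers $\sonetwo \vdash \forall k\, \Box_{T^{[T,\mathsf{n}(k)]}} (\sonetwo)^N$.

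The main subtlety, echoing the warnings in Section~\ref{apthe} about avoiding $\Sigma_1$-collection and \expo, is the step of decomposing one $T$-proof of $(\alpha^\star)^{\mathsf{n}(k)}$ into individual $T$-proofs of each conjunct. This is safe only because $\alpha^\star$ is \emph{standardly} long: we perform an externally fixed number of conjunction-eliminations, so there is no hidden collection principle being invoked and no need to bound uniformly a non-standardly-long family of witnesses. Everything else is a routine combination of the Feferman-style axiomatisation and Lemma~\ref{lemm:sk-props}(1).
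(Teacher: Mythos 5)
Your proposal is correct and follows essentially the same route as the paper's own proof: invoke Lemma~\ref{lemm:sk-props}(1) to get a $T$-proof of $(\alpha^\star)^{\mathsf{n}(k)}$, extract $T$-proofs $p_i$ of the translated conjuncts $\varphi_i^{\mathsf{n}(k)}$ (safe because $\alpha^\star$ is a standardly finite conjunction), and observe that each $\varphi_i \wedge (\underline{p_i}=\underline{p_i})$ then witnesses the axiom condition \eqref{eqn:axTTAsk}, so $T^{[T,\mathsf{n}(k)]}$ proves every $\varphi_i$, hence $\alpha^\star$, hence $(\sonetwo)^N$. Your explicit flagging of why no collection is needed is a nice touch that the paper leaves to the parenthetical ``(standardly) finitely many''.
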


\begin{proof}
Reason in \sonetwo. 
Consider 
 any translation $k$ from the language of $T$ to the language of $T$. 
Lemma \ref{lemm:sk-props} tells us there is 
    a proof in $T$ of $(\alpha^\star)^{{\sf n}(k)}$. 
Hence, we have proofs $p_i$ of $(\alpha_i)^{{\sf s}(k)}$,
for (standardly) finitely many  $T$-axioms $\alpha_1, \dots, \alpha_n$.
We would like to show that $T^{[T, {\sf n}(k)]}$ 
    proves each of these $\alpha_i$, since then $T^{[T , {\sf n}(k)]}$ 
    proves $(\sonetwo)^N$.
We take arbitrary $\alpha_i$ and put 
$x = \ulcorner \alpha_i \wedge (\underline{p_i}=\underline{p_i})\urcorner$.
Clearly, this $x$ witnesses  \eqref{eqn:axTTAsk},
     the first two conjuncts of the body of \eqref{eqn:axTTAsk}.
Furthermore, $T$ proves $(\alpha_i)^{{\sf n}(k)}$ 
    because $p_i$ is a proof of this formula in $T$.
So, $ \alpha_i \wedge (\underline{p_i}=\underline{p_i})$ is an axiom
    of $T^{[T , {\sf n}(k)]}$, 
    whence $T^{[T , {\sf n}(k)]}$ 
    proves $\alpha_i$.
\end{proof}

\noindent
Recall that we work with the theories $T$ that interpret $\sonetwo$
    and that we fix a designated interpretation $N : T \rhd \sonetwo$.
We defined a variety of other theories of the form $T^{[x]}$, but we did not specify
    what interpretation of $\sonetwo$ we are supposed to bundle them with.
The preceding lemma tells us that we can reuse $N$.
Thus, 
    we will take $N$ as the designated interpretation of 
    \sonetwo in the $T^{[x]}$.

\subsection{Arithmetical soundness}
As before, we fix our base theory $T$ of signature $\Theta$, the interpretation $N$ of \sonetwo in $T$, the sentence $\alpha^\star$, and the mapping on translations {\sf n}.
Let us say that translations in the range of {\sf n} are \emph{good translation}. We call the formalised predicate of being good: {\sf good}.

As usual, the modal logics are related to arithmetic via \emph{realisations}. Realisations map the propositional 
variables to sentences in the language of arithmetic. However, we now also have to deal with the interpretation sequences. 
Thus, our realisations for the arithmetical interpretation are pairs $(\sigma, \kappa)$, where:
\begin{itemize}
    \item $\sigma$ maps the propositional variables to $T$-sentences, and
    \item $\kappa$ maps the interpretation variables to good translations  from the language of $T$ to the language of $T$.
    \end{itemize}
We stipulate that the $\sigma$ maps  all but finitely many arguments to $\top$  and, likewise,  that
    the $\kappa$ maps all but finitely many arguments to ${\sf n}({\sf id}_\Theta)$.
    The realisations are lifted to the arithmetical language in the obvious way by having them commute with the logical connectives and by taking:

\begin{align*}
    &(\Box^{k_1, \dots, k_n}A)^{\sigma,\kappa} := \Box_{T^{[ 
            \,
            \left\langle \nm{\kappa(k_1)} ,
            \dots,
         \nm{\kappa(k_n)} \right\rangle
            \,
        ]}}A^{\sigma,\kappa}, \mbox{ and }\\\\
    &(A\rhd^{k_1, \dots, k_n}B)^{\sigma,\kappa} :=\\ 
    & \hspace{4em} (T+A^{\sigma,\kappa}) \rhd (T^{[
        \,
        \left\langle  \nm{\kappa(k_1)},
        \dots,
         \nm{\kappa(k_n)} \right\rangle
        \,
    ]}+B^{\sigma,\kappa}).
\end{align*}
Here, in the context of $T$, we suppress the relativisation to $N$, it being the silent understanding that all coding is done inside $N$.
We observe that the nested modalities make sense because of Lemma~\ref{gloglop}. \emph{A central point here is that we allow
$\kappa$ to be an internal variable.} The transformation $T \mapsto T^{[\left\langle \nm{\kappa(k_1)},\dots  \nm{\kappa(k_n)} \right\rangle]}$ is, in essence,
a transformation of indices of theories and can, thus, be represented internally.

We note that the formula $(\Box^{k_1, \dots, k_n}A)^{\sigma,\kappa}$ will \emph{not} be generally equivalent to 
$((\neg \,A)\rhd^{k_1, \dots, k_n}\bot)^{\sigma,\kappa}$.

%

A modal formula $A$ will be arithmetically valid in $T$ and $N$, w.r.t. our choice of $\alpha^\star$, iff, for all $\sigma$, we have
$T\vdash \forall \kappa\, A^{\sigma,\kappa}$. We note that it is necessary that the quantifier over $\sigma$ is external, since the substitutions are
at the sentence level. However, the internal quantification over the $\kappa$ makes sense since these program transformations of of indices for theories.

\begin{theorem}[Arithmetical Soundness]
Let $T$ be a $\Delta_1^{\sf b}$-axiomatisable theory containing \sonetwo via $N$. We have, relative to a fixed $\alpha^\star$, 
\[
\Gamma \vdash_{\atl} A \ \ \To \ \  \text{for all } \sigma,\; \;T\vdash \forall \kappa\; \left (\,\bigwedge
\Gamma^{\sigma,\kappa}\to A^{\sigma,\kappa} \right ).
\]
\end{theorem}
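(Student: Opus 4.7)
The plan is to proceed by induction on the length of an \atl-derivation of $\Gamma\vdash A$. For each axiom and rule, I would verify that its arithmetical translation is $T$-provable under $\forall\kappa$, using the toolkit built up in Sections~3 and~4.

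The propositional tautologies are immediate. The axioms $\finprin{L_1}{\svo}$--$\finprin{L_3}{\svo}$ are just the Hilbert--Bernays--L\"ob derivability conditions applied to the provability predicate of $T^{[\kappa(\svo)]}$; these hold because, by Lemma~\ref{gloglop}, each such theory contains $(\sonetwo)^N$ and is $\Delta_1^{\sf b}$-axiomatised, so the standard derivability machinery goes through. The axioms $\finprin{J_1}{\svo}$--$\finprin{J_4}{\svo}$ are routine from the definition of theorems-interpretability applied between $T+\alpha$ and $T^{[\kappa(\svo)]}+\beta$. Axiom $\finprin{J_5}{\svo,\svt}$ is verified by noting that $(T+A^{\sigma,\kappa})\rhd (T^{[\kappa(\svo)]}+\Diamond^{\svt}B^{\sigma,\kappa})$ yields a model-existence style witness for $B^{\sigma,\kappa}$ in an internal model of $T^{[\kappa(\svt)]}$. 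The two ``change of sequence'' axioms, $\finimpl{\svo,k}$ and $\finrhdimpl{\svo,k}$, translate precisely to Corollary~\ref{cor:boxextensioProvability}(\ref{tommel}) and Lemma~\ref{lemm:boxextensio}(\ref{bommel}), respectively, both of which are verifiable in \sonetwo and hence in $T$. The rule $\finprin{Nec}{\svo}$ follows by applying provable $\exists\Sigma^{\sf b}_1$-completeness inside $T$ to the inductive hypothesis.

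The main obstacle is the rule $\finprin{P}{\svo,\svt,k}$. Suppose inductively that
\[
T\vdash \forall\kappa\, \Big(\,\bigwedge \Gamma^{\sigma,\kappa}\wedge \bigwedge \varDelta^{\sigma,\kappa}\wedge \Box^{\svt}(A\rhd^{\svo,k}B)^{\sigma,\kappa}\to C^{\sigma,\kappa}\Big).
\]
I would argue for the conclusion as follows. Reason in $T$ and fix $\kappa$ together with $\bigwedge \Gamma^{\sigma,\kappa}$ and $(A\rhd^{\svo}B)^{\sigma,\kappa}$. By Theorem~\ref{theorem:FinalPstyleTheorem} (with $x:=\kappa(\svo)$, $\alpha:=A^{\sigma,\kappa}$, $\beta:=B^{\sigma,\kappa}$), the premise $(T+A^{\sigma,\kappa})\rhd (T^{[\kappa(\svo)]}+B^{\sigma,\kappa})$ yields, for every good translation $k_0$,
\[
\Box_{\sonetwo}\bigl((T+A^{\sigma,\kappa})\rhd (T^{[\kappa(\svo),k_0]}+B^{\sigma,\kappa})\bigr).
\]
Since $k$ is an interpretation variable not occurring in $\svo,\Gamma,A,B,C$, we may now apply the inductive hypothesis to the \emph{modified} realisation $\kappa'$ which agrees with $\kappa$ except that $\kappa'(k)$ is any freely chosen good translation; the external quantifier $\forall\kappa$ in the premise lets us vary this value internally. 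The formulas in $\varDelta$, under $\kappa'$, become exactly the arithmetical instances of $\finintext{k}$ and $\finboxext{k}$, which are $T$-provable by Lemma~\ref{lemm:boxextensio} and Corollary~\ref{cor:boxextensioProvability} applied to $\kappa'(k)$. Finally, the $\Box^{\svt}$-step is absorbed by moving inside $\Box_{T^{[\kappa(\svt)]}}$, using $\finimpl{}{}$ to descend from $\Box_{\sonetwo}$ (here the fact that $T^{[\kappa(\svt)]}$ extends $\sonetwo$ via $N$ is crucial, and is supplied by Lemma~\ref{gloglop}). This delivers $C^{\sigma,\kappa}$ and closes the case.

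The delicate point I expect to spend most care on is the interplay between the external quantifier $\forall\sigma$ and the internal $\forall\kappa$ in the P-rule step: one has to verify that the freshness of $k$ guarantees that redefining $\kappa(k)$ does not affect $\Gamma^{\sigma,\kappa}$, $A^{\sigma,\kappa}$, $B^{\sigma,\kappa}$ or $C^{\sigma,\kappa}$, and that the internal variation of $\kappa(k)$ can indeed be carried out uniformly in $T$ because the assignment $k\mapsto T^{[\kappa(\svo),k]}$ is a $\Delta_1^{\sf b}$-transformation of theory indices, as explained in the paragraph following the definition of the realisation.
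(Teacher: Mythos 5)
Your overall architecture --- induction on \atl-derivations, routine verification of the axioms via Lemma~\ref{lemm:boxextensio}, Corollary~\ref{cor:boxextensioProvability} and the Henkin argument for $\finprin{J_5}{\svo,\svt}$, with the rule $\finprin{P}{\svo,\svt,k}$ as the one substantial case discharged through Theorem~\ref{theorem:FinalPstyleTheorem} --- is exactly the paper's. The gap is in how you instantiate the fresh variable $k$. You let $\kappa'(k)$ be ``any freely chosen good translation''; the paper instead defines $\kappa(k)$ to be a translation \emph{witnessing} the assumed statement $(A\rhd^{\svo}B)^{\sigma,\kappa'}$, i.e.\ $\kappa(k):(T+A^{\sigma,\kappa'})\rhd(T^{[\kappa'(\svo)]}+B^{\sigma,\kappa'})$, a witness whose internal existence is guaranteed precisely because that assumption is existential over translations. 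This choice is not cosmetic: it is the only place where the hypothesis $A\rhd^\svo B$ does any work.

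Two of your steps fail for an arbitrary good $k_0$. First, Theorem~\ref{theorem:FinalPstyleTheorem} does not yield $\Box_{\sonetwo}\bigl((T+A^{\sigma,\kappa})\rhd(T^{[\kappa(\svo),k_0]}+B^{\sigma,\kappa})\bigr)$ for \emph{every} good $k_0$: its proof (via Theorem~\ref{theorem:PLikePrincipleAtL:general} and Lemma~\ref{lemm:bijnap}) gives, under the box, the interpretation $k_0:T\rhd T^{[\kappa(\svo),k_0]}$ for free, but extending this to $\ldots+B^{\sigma,\kappa}$ requires $\Box\Box_{T+A^{\sigma,\kappa}}(B^{\sigma,\kappa})^{k_0}$, which is obtained by $\exists\Sigma^{\sf b}_1$-completeness from $\Box_{T+A^{\sigma,\kappa}}(B^{\sigma,\kappa})^{k_0}$ and hence is available only for a witnessing $k_0$ (compare the explicit ``$k:$'' in the hypotheses of Theorem~\ref{theorem:PLikePrincipleAtL:general} and Lemma~\ref{lemm:GeneralisationOftheorem:FinalPstyleTheorem}). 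Second, and independently, the $\varDelta$-formulas of the form $\Box^{\svo}E\to\Box^{\svo,k}E$ are \emph{not} $T$-provable for arbitrary good $\kappa'(k)$: the needed direction of Corollary~\ref{cor:boxextensioProvability} (equivalently Lemma~\ref{lemm:boxextensio}(\ref{tompoes})) carries the hypothesis $k:U\rhd V$, and without it $T^{[\kappa'(\svo),\kappa'(k)]}$ may be a strictly weaker theory than $T^{[\kappa'(\svo)]}$ --- a good translation is only guaranteed to interpret $\alpha^\star$, not $T$, let alone $T^{[\kappa'(\svo)]}$. Once $\kappa(k)$ is taken to be the witness, both steps go through and the remainder of your argument (the freshness of $k$ giving $C^{\sigma,\kappa}=C^{\sigma,\kappa'}$, the descent from $\Box_{\sonetwo}$ into $\Box_{T^{[\kappa(\svt)]}}$ via Lemma~\ref{gloglop}) coincides with the paper's proof.
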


\begin{proof}
We use induction on \atl proofs. 

The axiom \finimpl{\svo,k}: $\vdash \finmod{\Box}{\svo,k} A \rightarrow \finmod{\Box}{\svo} A$ is directly obtained by Corollary \ref{cor:boxextensioProvability}.\ref{tommel}.    
and \finrhdimpl{\svo,k} are immediate from Lemma~\ref{lemm:boxextensio}.
The principles \finprin{L_1}{\svo} to \finprin{J_4}{\svo} are simple. 


The principle \finprin{J_2}{\svo}{\sf b} is immediate from the observation that
$(T+\alpha) \rhd (T^{[\svo]}+\beta)$ and $\Box_{T^{[\svo]}}(\beta \to \gamma)$ imply $(T+\alpha) \rhd (T^{[\svo]}+\gamma)$.

The validity of \finprin{J_5}{\svo, \svt}
 follows
  from the observation that,
  \[\sonetwo\vdash \forall j,k\in {\sf good}\, (T^{[U, \nm{k}]}+{\sf con}(T^{[V, \nm j]}+B))\rhd
  (T^{[V,\nm j]}+B).\]
   This follows by the usual formalisation of Henkin's Theorem (see e.g., \cite{Visser:1991:FormalizationOfInterpretability, Joosten:2016:OreyHajek, visser:2018:InterpretationExistence}).

We now consider ${\sf P}^{\svo,\svt,k}$ which tells us that from ${\Gamma,\varDelta,  \Box^\svt (A \rhd^{\svo, k} B)\vdash C}$ we may derive 
${\Gamma, A\rhd^\svo B \vdash C}$. 

Consequently, for the closure of arithmetical validity under this rule we assume that 
\begin{equation}\label{equation:soundnessPAssumption}
\text{for all } \sigma, \ T \vdash (\forall \kappa) \  \Big( \bigwedge \Gamma^{\sigma, \kappa} \wedge \bigwedge \varDelta^{\sigma, \kappa} \wedge 
\big( \Box^{\svt} (A \rhd^{\svo,k}B)\big)^{\sigma, \kappa} \to C^{\sigma, \kappa} \Big)    
\end{equation}
and will need to prove
\begin{equation}\label{eq:PSoundToBeProven}
\text{for all } \sigma, \ T \vdash \forall \kappa \  \Big( \bigwedge \Gamma^{\sigma, \kappa} \wedge  \big(  A \rhd^{\svo}B\big)^{\sigma, \kappa} \to C^{\sigma, \kappa} \Big)    .
\end{equation}
To this end, we fix $\sigma$. We reason in $T$. We fix some $\kappa'$ and assume 
\begin{equation}\label{eq:PsoundAss}
\bigwedge \Gamma^{\sigma, \kappa'} \wedge  \big(  A \rhd^{\svo}B\big)^{\sigma, \kappa'} .
\end{equation}
We remind the reader that the modal interpretation variable $k$ is supposed to be fresh. Let $\kappa$ be as $\kappa'$ with the sole exception that 
\[
\kappa (k) \ : \ A^{\sigma, \kappa'} \rhd^{\kappa'(\svo)} B^{\sigma, \kappa'}  \mbox{ whence also } \kappa (k) \ : \ A^{\sigma, \kappa} \rhd^{\kappa(\svo)} B^{\sigma, \kappa}.
\]
Note that we can that the existence of a desired choice for $\kappa(k)$ is guaranteed by Assumption \eqref{eq:PsoundAss}.
By Lemma \ref{theorem:FinalPstyleTheorem} we get $\Box \Big( A^{\sigma, \kappa} \rhd^{\kappa(\svo,k)} B^{\sigma, \kappa} \Big)$. Moreover, by Lemma \ref{lemm:boxextensio}(\ref{tompoes}) and by Corollary \ref{cor:boxextensioProvability} we may conclude $\bigwedge \varDelta^{\sigma, \kappa}$ so that by \eqref{equation:soundnessPAssumption} we conclude $C^{\sigma, \kappa}$. Since $k$ does not occur in $C$ we may thus conclude $C^{\sigma, \kappa'}$ which finishes the proof of \eqref{eq:PSoundToBeProven} and hence the soundness of ${\sf P}^{\svo,\svt,k}$.
%
\ignore{We refer the reader to \cite{JoostenVisser:2004:Toolkit} for details. An important ingredient is given in Theorem \ref{theorem:FinalPstyleTheorem}.}
\end{proof}

\ignore{
\section{Generalisations and alternatives}\label{vraatzuchtigesmurf}
The system \atl leaves room for alternatives that we shall briefly discuss in this section.
\albnote{I suggest to drop the whole Section~\ref{vraatzuchtigesmurf} for the time being. The first subsection, \ref{vraatzuchtigesmurf}, is 
clearly not well thought through. The second subsection, \ref{raaskalsmurf}, is nonsense.}

\subsection{Room for generalisations}
We already observed that our modal system \atl does not directly allocate the potential extra flexibility
that Lemma \ref{lemm:GeneralisationOftheorem:FinalPstyleTheorem} has over Theorem \ref{theorem:FinalPstyleTheorem} regarding different base theories. If we would like our logics to reflect the extra flexibility, we could work with sequences of pairs of formulas and translations instead of just sequences of translations. These formulas can then be added to the base theory. Similar, but even more general, is the following notion where assignments for the arithmetical interpretation
are triples $(\sigma, \kappa, \tau)$, where:
\begin{itemize}
    \item $\sigma$ maps the propositional variables to $T$-sentences,
    \item $\kappa$ maps the interpretation variables to good translations from $\Theta$ to $\Theta$, where
    $\Theta$ is the signature of $T$, and
    \item $\tau$  maps the interpretation variables to theories in the language of $T$ (as given by a formula representing the axiom set).
    \albnote{This is mucho obscuro. I guess we want finite extensions of $T$. Then, $\tau$ will produce $\Theta$-sentences and
    we are saved from the dangerous swamp of intensionality.}
    \end{itemize}
As before, we stipulate that the $\sigma$ are $\top$ for all but finitely many arguments; 
    the $\kappa$ map to ${\sf n}({\sf id}_\Theta)$ for all but finitely many arguments;
    and the $\tau$ map to $T$ for all but finitely many arguments. \albnote{In my proposal this would be $\top$.}
The assignments  are lifted to the language of $T$ in the obvious way as before, but now taking:

\begin{align*}
    &(\Box^{k_1, \dots, k_n}A)^{\sigma,\kappa,\tau} := \Box_{T^{[ 
            \,
            \left\langle \tau(k_1), \kappa(k_1) \right\rangle,
            \dots,
            \left\langle \tau(k_n), \kappa(k_n) \right\rangle
            \,
        ]}}A^{\sigma,\kappa,\tau},\\
    &(A\rhd^{k_1, \dots, k_n}B)^{\sigma,\kappa,\tau} :=\\ 
    & \hspace{4em} (T+A^{\sigma,\kappa,\tau}) \rhd (T^{[
        \,
        \left\langle \tau(k_1), \kappa(k_1) \right\rangle,
        \dots,
        \left\langle \tau(k_n), \kappa(k_n) \right\rangle
        \,
    ]}+B^{\sigma,\kappa,\tau}).
\end{align*}

This notion gives rise to the following notion of consequence\luknote{Why do we define semantic consequence here? Don't we just want to state that $\Gamma \vdash_\atl A \Iff (...)$ holds?}
\albnote{This is correct. What you want is the arithmetical completeness theorem, but that is beyond the horizon.}
and soundness of \atl~with respect to this notion of consequence is readily proven.
\[
\Gamma\models_T A \;:\Iff \; \text{for all } \sigma,\; \;\sonetwo\vdash (\forall \kappa, \tau)\, (\bigwedge
\Gamma^{\sigma,\kappa,\tau}\to A^{\sigma,\kappa,\tau}).
\]
Another way of possible generalisation is given by approximating both the interpreted \emph{and the interpreting} theory. We observe that currently we only approximate the interpreted theory. Alternatively, we could label the binary modality $\rhd$ by a pair of sequences $x,y$ of translations with the intended reading of $A \rhd^{x,y} B$ being $T^{[{\sf s}({x})]} + \alpha \rhd T^{[{\sf s}({y})]} + \beta$ when $\alpha$ and $\beta$ are the intended readings of $A$ and $B$ respectively. Such a generalisation would allow for a more sophisticated transitivity axiom:
\[
(A \rhd^{\svo,\svt} B ) \wedge (B \rhd^{\svt,\svtr} C) \to  (A \rhd^{\svo,\svtr} C ).
\]
We leave these observations for future investigations.

\subsection{An alternative system}\label{raaskalsmurf}

The idea of approximating finite axiomatisability \albnote{Let $T$ be {\sf PA} and let $\alpha := \top$ and let $\beta$ be ${\sf con}(\sonetwo)$.
    Then, over \sonetwo or over {\sf PA}: $\alpha \rhd \beta$, but $\alpha \rhd^0\beta$ is equivalent to $\Box^0 \bot$, by the Second Incompleteness
    Theorem. I.o.w., this is all nonsense. Also, Luka refers to Section~\ref{blaaskaaksmurf}. We must also remove that.}
    can be realised in a different way.
Let $A \rhd^0 B$ stand for $T + \alpha \rhd \sonetwo + \beta$ when $\alpha$ and $\beta$ are the arithmetical interpretation of $A$ and $B$ respectively.
Likewise, $\Box^0 A$ will stand for $\Box_\sonetwo \alpha$.
Using this notation, we can formulate a new sound principle.

\begin{lemma}
    \label{gen_p_s12}
    Given an interpretation $k$, we have
    \[
        \vdash k : A \rhd B \to \Box^0 (k : A \rhd^0 B).
    \]
    \luknote{Should we say $T \vdash$ instead of just $\vdash$? Similarly below.}
\end{lemma}
\begin{proof}
    Assume $k : A \rhd B$.
    Clearly $k : A \rhd^0 B$, by the assumption that
        all our base theories $T$ extend \sonetwo.
    Since $\sonetwo + B$ is finitely axiomatisable,
        $k : A \rhd^0 B$ is a $\exists \Sigma_1^{\sf b}$-statement.
    By $\exists \Sigma_1^{\sf b}$-completeness we get the required
        $\Box^0 (k : A \rhd^0 B)$.
\end{proof}
\luknote{In 9.4 we refer to 6.2 as if the left hand side of the 6.2's statement was $A \rhd^0 \Diamond B$. So we probably need to reorganise this a bit, I guess the simplest way is to add $^0$ to the LHS both in 6.1 and 6.2, and then have one more lemma stating that $k: A \rhd B$ implies $k: A \rhd^0 B$.}
\begin{lemma}
    \label{gen_p0_s12}
    \[
        \vdash A \rhd \Diamond B \to \Box^0 (A \rhd B).
    \]
\end{lemma}
\begin{proof}
    Using Lemma \ref{gen_p_s12}
        and noticing that from $k : A \rhd^0 \Diamond B$ 
        we can obtain $A \rhd B$ 
        just like with the rule \finprin{J_5}{x, y} of \atl.
\end{proof}

The relation between the logic \atl,
    i.e.~reasoning with iterated approximations,
    and reasoning with $\rhd^0$ and non-iterated approximations, is unknown. In particular, we do not know if both systems prove the same theorems in their common language or in the language without any interpretability variable at all. 
However, we do observe that both systems are sufficiently strong for the principles
    appearing in this paper. }

\ignore{\subsubsection{${\sf P}^k$}
We now consider ${\sf P}^k$. Reason in \sonetwo.
Suppose $k^\star:(T+\varphi)\rhd (T+\psi)$.
Let $k':=k^\star\tupel{\varphi}{\sf id}$\/ be the translation that acts like $k^\star$ if $\varphi$ and 
like {\sf id} if $\neg\varphi$. Let $k:={\sf s}(k')$. (This last move is only of an administrative nature, 
since, in the present context, $k'$\/ and $k$\/ will be the
same in their behaviour as interpretations.) 
Then, we have both

\vskip 2ex
$k:T\rhd T$---\luka: \textbf{why does this matter, and why can we leave $\varphi$ out of this claim?}

\vskip 2ex
and $k:(T+\varphi)\rhd (T+\psi)$. 
\luka: \textbf{Perhaps this (second claim) is obvious, but I wrote a proof (it makes sense to
    me now that I see the proof, but I wouldn't call it obvious):}
    Let us first show that
        $k' :(T+\varphi)\rhd (T+\psi)$.
    That is, let us deduce $\Box_{ T+\varphi } A^{k'}$ from the
        assumption $\Box_{ T+\psi } A$.
    Unpacking $A^{k'}$, we see that we are to show \[
        \Box_{ T+\varphi }    
         \Big( \varphi \wedge A^{k^\star}  \Big)
         \vee
         \Big( \neg \varphi \wedge A \Big).
     \]
    Since $k^\star : (T+\varphi)\rhd (T+\psi)$, we have
        $\Box_{ T+\varphi }  A^{k^\star}$ and thus
        $\Box_{ T+\varphi }  \varphi \wedge A^{k^\star}$, as required.
    Now we can show that $k :(T+\varphi)\rhd (T+\psi)$.
    Assume $\Box_{ T+\psi } A$.
    Our goal is to prove $\Box_{ T+\varphi } A^k$, i.e. \[ \Box_{ T + \varphi }
        \Big( (\alpha^\star)^{k'} \wedge A^{k'}  \Big)
        \vee
        \Big( \neg (\alpha^\star)^{k'} \wedge A \Big).
    \]
    We already have $\Box_{ T+\varphi } A^{k'}$, so it would
        suffice to show that $\Box_{ T+\varphi } (\alpha^\star)^{k'}$, i.e.
    \[
        \Box_{ T+\varphi }    
         \Big( \varphi \wedge (\alpha^\star)^{k^\star}  \Big)
         \vee
         \Big( \neg \varphi \wedge (\alpha^\star) \Big).
     \]    
    Thus, we have to show  $\Box_{ T+\varphi } (\alpha^\star)^{k^\star}$.
    We know that $\Box_{T + \psi} \alpha^\star$, 
        and since $k^\star:(T+\varphi)\rhd (T+\psi)$,
        we must have $\Box_{T + \varphi}(\alpha^\star)^{k^\star}$.
        
\luka: [This is the end of my proof of that claim above]

\vskip 2ex
By Lemma~\ref{lemm:bijnap}, we have $\Box_T(k:T\rhd T^{[k]})$.---\luka:
    I would say $\Box_T (k:T + \varphi \rhd T^{[T + \varphi, k]})$, since
    that doesn't rely on $k:T\rhd T$, and is also closer to what we
    actually need here.

\vskip 2ex
Also we have $k:(T+\varphi)\rhd \psi$, and, hence, 
$\Box_T(k:(T+\varphi)\rhd \psi)$. Combining, we find:
$\Box_T(k:(T+\varphi)\rhd (T^{[T + \varphi, k]}+\psi))$.

\vskip 2ex
By Lemma~\ref{lemm:boxextensio}, we find that ${\sf id}:T\equiv T^{[k]}$. 
So, from $\Box_T\delta$ we will get $\Box_{T^{[k]}}\delta$. 

\luka: (my alternative to the preceding two sentences) Lemma~\ref{lemm:boxextensio}
    implies $k : T + \varphi \rhd T \to 
        \mathsf{id} : T^{[ T + \varphi, k ]} \rhd T $. 
Thus $\mathsf{id} : T^{[ T + \varphi, k ]} \rhd T $. 
So, from $\Box_T \delta$ we will get $\Box_{T^{[ T + \varphi, k ]}} \delta$.
This tells us that usages of \finboxext{k} are arithmetically sound.

\vskip 2ex
Moreover,
Suppose $m:(T+\delta)\rhd(T^{[k]}+\varepsilon)$. It follows that:
HERE WE SHOULD LOAD A NEW DIAGRAM PACKAGE TO DISPLAY THE ABOVE COMMENTED CODE. ALBERT, I GUESS YOU HAVE THAT ON YOUR COMPUTER
So, $m: (T+\delta)\rhd(T+\varepsilon)$. 

\luka: (again, my alternative for the preceding paragraph)
We aim to show that the same holds for the usages of \finintext{k},
    i.e.\ we will prove the following: \[
        T + \delta \rhd T^{[ T + \varphi, k ]} + \varepsilon
        \to T + \delta \rhd T + \varepsilon .
    \]
Assume $T + \delta \rhd T^{[ T + \varphi, k ]} + \varepsilon$.
We use $\mathsf{id} : T^{[ T + \varphi, k ]} \rhd T $ again.
Clearly $\mathsf{id} : T^{[ T + \varphi, k ]} + \varepsilon \rhd T + \varepsilon$.
Combining with our assumption, $T + \delta \rhd  T + \varepsilon$.

\luka{}: I'm just noting here that, as far as I can see, 
    we didn't need $k:T\rhd T$ for anything above.
}

\ignore{\subsection{Joost's shorter proof invoking simply the earlier lemmas}}

\section{On principles in \ilal}\label{setc:principles}

In this section, we give arithmetical soundness proofs
for some well-known \inty principles that hold in all \rat. 
For this purpose we will employ the system \atl.

To avoid repeating too much content from \cite{JoostenVisser:2004:Toolkit},
    here we study only the following principles, 
    but with proofs written in more detail
    compared to \cite{JoostenVisser:2004:Toolkit}.
For other well-known principles we refer to \cite{JoostenVisser:2004:Toolkit}.
    \[
\begin{tabular}{ll}
\principle{W} & $\vdash A\rhd B\to A\rhd (B\wedge \Box\neg\,A)$ \\
\principle{M_0} & $\vdash A\rhd B \to (\Diamond A\wedge \Box C)\rhd (B\wedge\Box C)$ \\
\principle{R} &    $\vdash
A\rhd B \to \neg\, (A \rhd \neg\, C ) \rhd  B \wedge \Box C $
\end{tabular}
\]

\subsection{The principle \principle{W}}
\label{subsection:W}

We start with the \extil{P}-proof of the principle \principle{W},
    which we will later convert to an \atl{} proof of \principle{W}.

\begin{fact}\label{pw}
$\ilp \vdash \principle{W}$.
\end{fact}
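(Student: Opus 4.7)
My plan is to reconstruct the classical $\ilp$-proof of $\principle{W}$ due to Visser. The overall strategy splits into a reduction step that is pure $\il$ combinatorics plus a Löb-style core that is where $\principle{P}$ actually pulls its weight.

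First I would assume $A \rhd B$ and immediately invoke $\principle{P}$ to obtain $\Box(A \rhd B)$. This gives me the crucial ``stability'' of the hypothesis under the box, which I will need when reasoning inside interpretations. Next I would exploit the propositional tautology
\[
B \;\leftrightarrow\; (B \wedge \Box\neg\,A) \vee (B \wedge \Diamond A).
\]
Using $\principle{J_1}$ this upgrades to an interpretability statement $B \rhd (B \wedge \Box\neg\,A) \vee (B \wedge \Diamond A)$, and then $\principle{J_2}$ chains it with the assumption to give $A \rhd (B \wedge \Box\neg\,A) \vee (B \wedge \Diamond A)$. By $\principle{J_3}$ (together with the trivial $\principle{J_1}$-instance $(B \wedge \Box\neg\,A) \rhd (B \wedge \Box\neg\,A)$), the goal reduces to establishing
\[
(B \wedge \Diamond A) \;\rhd\; (B \wedge \Box\neg\,A).
\]

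For this remaining claim, I would combine the instance $\Diamond A \rhd A$ of $\principle{J_5}$ with the $\Box$-stable hypothesis from the first step: going $(B\wedge\Diamond A) \rhd \Diamond A \rhd A \rhd B$ by transitivity recovers the $B$ component inside the target, but the real work lies in producing $\Box\neg\,A$ there. Here I would invoke a Löb-style fixed-point argument, using $\principle{L_3}$: iterating the interpretation witnessed by $\Box(A \rhd B)$ along a well-founded chain forces $\Box\neg\,A$ to hold in the target, because the alternative (an infinite descent through $\Diamond A$-worlds) contradicts Löb.

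The main obstacle is precisely this last Löb step: packaging the ``internally the hypothesis keeps firing'' intuition as a clean modal derivation. Concretely, I expect to need $\principle{L_3}$ applied to a fixed-point formula $\varphi$ encoding ``$A$ interprets $B$ together with $\varphi$ being provable'', so that the Löb axiom collapses the fixed-point to deliver $\Box\neg\,A$ on top of the target. Once this core step is formalised, the remaining bookkeeping ($\principle{J_1}$, $\principle{J_2}$, $\principle{J_3}$, and $\principle{P}$ to propagate $\Sigma_1$-style facts under the box) is routine.
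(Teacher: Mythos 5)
Your reduction matches the paper's proof exactly: assume $A\rhd B$, use \principle{P} to obtain $\Box(A\rhd B)$, split the target as $B \leftrightarrow (B\wedge\Box\neg\,A)\vee(B\wedge\Diamond A)$, and reduce via \prin{J_1}, \prin{J_2}, \prin{J_3} to showing $B\wedge\Diamond A\rhd B\wedge\Box\neg\,A$. The gap is in how you propose to discharge this remaining claim. Your transitivity chain $(B\wedge\Diamond A)\rhd\Diamond A\rhd A\rhd B$ is a dead end: it only yields $(B\wedge\Diamond A)\rhd B$, and one cannot ``add $\Box\neg\,A$ afterwards'' to the target of a $\rhd$-statement. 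And the appeal to a fixed-point formula encoding ``$A$ interprets $B$ together with $\varphi$ being provable'' points at the wrong mechanism: no fixed point and no iteration along a well-founded chain are needed anywhere in this proof.

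The missing idea is to pass to $\Diamond B$ rather than $B$. From $\Box(A\rhd B)$, necessitated \prin{J_4} gives $(*)$ $\Box(\Diamond A\to\Diamond B)$ and hence $(**)$ $\Box(\Box\neg\, B\to\Box\neg\, A)$. Then the chain is: $B\wedge\Diamond A\rhd\Diamond B$ by \prin{J_1} from $(*)$; next $\Diamond B\rhd\Diamond(B\wedge\Box\neg\, B)$ by \prin{J_1} applied to the standard L\"ob consequence $\Box\bigl(\Diamond B\to\Diamond(B\wedge\Box\neg\, B)\bigr)$ of \prin{L_3} --- this single schematic instance, i.e.\ the contrapositive of $\Box(\Box\neg\,B\to\neg\,B)\to\Box\neg\,B$, is the \emph{entire} L\"ob step; then $\Diamond(B\wedge\Box\neg\, B)\rhd B\wedge\Box\neg\, B$ by \prin{J_5}; and finally $B\wedge\Box\neg\, B\rhd B\wedge\Box\neg\, A$ by \prin{J_1} from $(**)$. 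Chaining these with \prin{J_2} finishes the proof. Note that the boxed hypothesis is used twice --- once as $(*)$ to land in $\Diamond B$ and once as $(**)$ to convert $\Box\neg\,B$ back into $\Box\neg\,A$ --- and that this is the only place where \principle{P} does any work.
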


\begin{proof}
We reason in \extil{P}. Suppose
$A\rhd B$. Then, $\Box (A \rhd B)$. Hence, $(*)$
$\Box (\Diamond A \rightarrow \Diamond B)$, and, thus, $(**)$
 $\Box (\Box \neg B \rightarrow \Box \neg A)$.

Moreover, from $A\rhd B$, we have 
$A \rhd (B \wedge \Box \neg A) \vee (B\wedge \Diamond A)$. 
So it is sufficient to show:
 $B \wedge \Diamond A \rhd B \wedge \Box \neg A$. 

We have:
\[
\begin{array}{llll}
B\wedge \Diamond A & \rhd &  \Diamond B & \mbox{  by $(*)$ }\\
 &\rhd & \Diamond (B \wedge \Box \neg B) & \mbox{ by \principle{L_3} } \\
 &\rhd & B \wedge \Box \neg B & \mbox{ by \prin{J_5} } \\
  &\rhd & B \wedge \Box \neg  A & \mbox{ by $(**)$}.
\end{array}
\]
\end{proof}

To prove arithmetical soundness of \principle{W} we will essentially replicate the modal proof of \principle{W} in \ilp. 
We first give a more formal version of the proof that uses the rule 
    \finprin{P}{x, y, k} in the way we formally defined it.
Afterwards we will give a more natural proof.    

\begin{lemma} \label{lemma-atl-w} The following holds:
    \begin{align*}
          &\Box(A \rhd^{[k]} B),\\
          &(B \wedge \Diamond A \rhd^{[k]} B \wedge \Box^{[k]} \neg B)
          \to 
          (B \wedge \Diamond A \rhd B \wedge \Box^{[k]} \neg B)\\
          \vdash_\atl{} &
          B \wedge \Diamond A \rhd B \wedge \Box \neg A.
    \end{align*}
\end{lemma}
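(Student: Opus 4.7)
The plan is to mirror the \ilp-proof of \principle{W} (Fact~\ref{pw}), relativising every modality in the key interpretability steps to the label $[k]$. Assumption~1 plays the role that $\Box(A\rhd B)$ plays in the original \ilp-proof, while assumption~2 is precisely the instance of the $\finintext{k}$-licence that we will use to strip $[k]$ off the outer $\rhd$ at the very end.

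First, from $\Box(A\rhd^{[k]}B)$ I extract two consequences. Applying $\finprin{J_4}{[k]}$ under the box, together with $\finprin{Nec}{}$ and $\finprin{L_1}{}$, yields $\Box(\Diamond A\to\Diamond^{[k]}B)$; taking the propositional contrapositive inside the box gives
\[
(\ast)\quad \Box\bigl(\Box^{[k]}\neg B\to\Box\neg A\bigr).
\]
The right conjunct $\Box\neg A$ of the conclusion will be produced from $(\ast)$ at the last step.

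Next, I build the relativised chain
\[
B\wedge\Diamond A\;\rhd^{[k]}\;\Diamond^{[k]}B\;\rhd^{[k]}\;\Diamond^{[k]}(B\wedge\Box^{[k]}\neg B)\;\rhd^{[k]}\;B\wedge\Box^{[k]}\neg B.
\]
The first link comes from $\Box(\Diamond A\to\Diamond^{[k]}B)$: by $\finprin{J_1}{}$ we get $\Diamond A\rhd\Diamond^{[k]}B$, which $\finrhdimpl{k}$ strengthens to $\Diamond A\rhd^{[k]}\Diamond^{[k]}B$; gluing this to the trivial $B\wedge\Diamond A\rhd\Diamond A$ through $\finprin{J_2}{[k]}{\sf a}$ delivers the first step. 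The second link uses $\finprin{L_3}{[k]}$, whose propositional contrapositive (after substituting $\neg B$ for $B$) is $\vdash\Diamond^{[k]}B\to\Diamond^{[k]}(B\wedge\Box^{[k]}\neg B)$; necessitating under $\finprin{Nec}{[k]}$ and feeding into $\finprin{J_2}{[k]}{\sf b}$ extends the chain. The third link is a single application of $\finprin{J_5}{[k],[k]}$ collapsing the outer $\Diamond^{[k]}$.

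To finish, assumption~2 converts $B\wedge\Diamond A\rhd^{[k]}B\wedge\Box^{[k]}\neg B$ into $B\wedge\Diamond A\rhd B\wedge\Box^{[k]}\neg B$. From $(\ast)$, using $\finprin{Nec}{}$ and $\finprin{L_1}{}$, I obtain $\Box\bigl((B\wedge\Box^{[k]}\neg B)\to(B\wedge\Box\neg A)\bigr)$, and a single application of $\finprin{J_2}{}{\sf b}$ yields the desired $B\wedge\Diamond A\rhd B\wedge\Box\neg A$. The main subtlety is label bookkeeping: because \atl{} offers no stand-alone transitivity of $\rhd^{[k]}$ with itself on both sides, each middle link of the chain must be routed either through the mixed $\finprin{J_2}{[k]}{\sf a}$ or through $\finprin{J_2}{[k]}{\sf b}$ applied to a $\Box^{[k]}$-necessitated implication, and one must resist the temptation to use $\finprin{J_5}{}$ too early, before the inner $\Diamond^{[k]}$ has acquired its $\Box^{[k]}\neg B$ companion via $\finprin{L_3}{[k]}$.
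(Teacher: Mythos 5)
Your proof is correct and follows essentially the same route as the paper's: extract $\Box(\Diamond A\to\Diamond^{[k]}B)$ and its contrapositive from assumption~1, run the Löb/$\finprin{L_3}{k}$ chain to reach $B\wedge\Diamond A\rhd^{[k]}B\wedge\Box^{[k]}\neg B$, strip the label with assumption~2, and finish with the contrapositive via $\finprin{J_2}{}{\sf b}$. The only (harmless) difference is bookkeeping: you carry the label $[k]$ on the outer $\rhd$ through the middle of the chain via $\finrhdimpl{k}$ and $\finprin{J_2}{k}{\sf a}$, whereas the paper keeps the outer relation unlabelled until the single application of $\finprin{J_5}{}{}$.
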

\begin{proof}
    Reason in \atl. 
    Some simple uses of rules and axiom schemas of \atl{} are left implicit.
    \begin{align}
        \setcounter{equation}{0}
        \label{watl_asm1} &\Box(A \rhd^{[k]} B)   
          & \text{assump.}  \\
        \label{watl_asm2} & {\scriptstyle (B \wedge \Diamond A \rhd^{[k]} B \wedge \Box^{[k]} \neg B)
          \to 
          (B \wedge \Diamond A \rhd B \wedge \Box^{[k]} \neg B)}
          & \text{assump.}  \\
        \label{watl_a} &\Box(\Diamond A \to \Diamond^{[k]} B) 
          & \text{by }\eqref{watl_asm1}, \finprin{J_4}{k} \\
        \label{watl_b} &\Box(\Box^{[k]} \neg B \to \Box \neg A) 
          & \text{by }\eqref{watl_a} \\
        \label{watl_c} &\Diamond A \rhd \Diamond^{[k]} B
          & \text{by }\eqref{watl_a}, \prin{J_1} \\     
        \label{watl_d} &B \wedge \Diamond A \rhd \Diamond^{[k]} B
          & \text{by }\eqref{watl_c}, \prin{J_1}, \prin{J_2} \\
        \label{watl_e} &B \wedge \Diamond A \rhd \Diamond^{[k]} (B \wedge \Box^k \neg B)
          & \text{by }\eqref{watl_d}, \finprin{L_3}{k}, \prin{J_1}, \prin{J_2} \\
        \label{watl_f} &B \wedge \Diamond A \rhd^{[k]} B \wedge \Box^k \neg B
          & \text{by }\eqref{watl_e}, \finprin{J_5}{k} \\
        \label{watl_g} &B \wedge \Diamond A \rhd B \wedge \Box^{[k]} \neg B
          & \text{by }\eqref{watl_asm2}, \eqref{watl_f} \\
        \label{watl_h} &B \wedge \Diamond A \rhd B \wedge \Box \neg A
          & \text{by }\eqref{watl_b}, \eqref{watl_g} 
    \end{align}
\end{proof}

\begin{proposition} \label{prop-atl-w} The principle \principle{W} is arithmetically valid.
    \[
        \atl \vdash A \rhd B \to A \rhd B \wedge \Box \neg A.
    \]
\end{proposition}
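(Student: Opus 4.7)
The plan is to mimic the \ilp-proof of \principle{W} from Fact \ref{pw}, using \atl's rule \finprin{P}{k} in place of the missing \principle{P}-axiom. The auxiliary goal is $B\wedge\Diamond A \rhd B\wedge\Box\neg A$; once this is derived from $A\rhd B$, a routine \il-style argument via \prin{J_3} and transitivity finishes the proof.

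For the auxiliary, I would apply Lemma \ref{lemma-atl-w}. Its two hypotheses are exactly of the shape that \finprin{P}{k} is designed to discharge: the first, $\Box(A\rhd^{[k]}B)$, is an instance of $\Box^\svt(A\rhd^{\svo,k}B)$ with $\svo$ and $\svt$ both empty, and the second is an instance of \finintext{k}, hence belongs to the permitted context $\varDelta$. Choosing $k$ fresh with respect to $A$ and $B$, one application of \finprin{P}{k} yields $A\rhd B\vdash_\atl B\wedge\Diamond A \rhd B\wedge\Box\neg A$.

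It then remains to argue in \atl from $A\rhd B$ alone. Since $\Box\neg A\vee\Diamond A$ is a propositional tautology, $B\to (B\wedge\Box\neg A)\vee(B\wedge\Diamond A)$ is derivable; by Necessitation and \prin{J_1} we obtain $B\rhd (B\wedge\Box\neg A)\vee(B\wedge\Diamond A)$, and then transitivity \prin{J_2} combined with $A\rhd B$ gives $A\rhd (B\wedge\Box\neg A)\vee(B\wedge\Diamond A)$. Combining the trivial $B\wedge\Box\neg A\rhd B\wedge\Box\neg A$ with the auxiliary via \prin{J_3} produces $(B\wedge\Box\neg A)\vee(B\wedge\Diamond A)\rhd B\wedge\Box\neg A$, and a final application of \prin{J_2} closes the argument. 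The essential modal content is already encapsulated in Lemma \ref{lemma-atl-w}, and the only subtle point is the freshness of $k$ in \finprin{P}{k}, which is trivially met since $k$ appears only inside the derivation of that lemma.
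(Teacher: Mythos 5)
Your proposal is correct and follows essentially the same route as the paper: the paper likewise obtains $A\rhd B\vdash_\atl B\wedge\Diamond A\rhd B\wedge\Box\neg A$ by discharging the two hypotheses of Lemma \ref{lemma-atl-w} with one application of \finprin{P}{k} (the second hypothesis indeed being a legitimate member of $\varDelta$), and then finishes with $A\rhd(B\wedge\Box\neg A)\vee(B\wedge\Diamond A)$, \prin{J_3}, and \prin{J_2}. Your write-up merely spells out the final \il-style bookkeeping in slightly more detail.
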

\begin{proof}
    Reason in \atl{}.
    By \finprin{P}{k} and Lemma \ref{lemma-atl-w} we get \[
        A \rhd B \vdash_\atl B \wedge \Diamond A \rhd B \wedge \Box \neg A.\ (*)
    \]
    Now assume $A \rhd B$. 
    Combining $A \rhd B$ with $(*)$ we get
    \[
        B \wedge \Diamond A \rhd  B \wedge \Box \neg A.\ (**)
    \]
    Clearly $A \rhd B$ implies \[
        A \rhd (B \wedge \Box \neg A) \vee (B \wedge \Diamond A).\ (*{*}*)
    \]
    From $(**)$ and $(*{*}*)$ by \prin{J_3} we obtain $A \rhd B \wedge \Box \neg A$.
    Thus \[
        \atl \vdash A \rhd B \to A \rhd B \wedge \Box \neg A,
    \]
    as required.
\end{proof}

The proof presented in Proposition \ref{prop-atl-w} (and Lemma \ref{lemma-atl-w})
    resembles the proof we gave earlier demonstrating that $\extil{P} \vdash \principle{W}$.
However, the resemblance is not exactly obvious;
    we had to turn our proof ``inside-out'' in order to use the rule \finprin{P}{k} 
    (resulting in the contrived statement of Lemma \ref{lemma-atl-w}).
This can be avoided by applying the rule \finprin{P}{k} in a different way.

When we want to conclude something starting from $A \rhd^x B$,
    we introduce a fresh interpretation variable $k$
    and get\forlater{\luknote{Would ``and the new rule now allows us to infer'' be better than ``get''?}} $\Box^y (A \rhd^{x,k} B)$ (for whichever $y$ we find suitable).
Now we have to be a bit more careful; we can't end the proof before
    we eliminate this $k$. 
We also have to be careful in how we use the rules \finboxext{k} and \finintext{k}.
Essentially, any proof in the new form must be formalisable in the 
    system \atl{} as it was defined earlier.
Let us demonstrate this with the principle \principle{W}.
    
Reason in \atl. 
Suppose that $A\rhd B$. By \finprin{P}{k} we have  for some $k$ that
$ \Box (A \rhd^{[k]} B)$. Hence, by \finprin{J_4}{k}, we have  $(*)$
$\Box (\Diamond A \rightarrow \Diamond^{[k]} B)$ and, so,
$(**)$ $\Box (\Box^{[k]} \neg B \rightarrow \Box \neg A)$.

Moreover, from $A\rhd B$, we have $A\rhd (B\wedge \Box \neg A) \vee 
(B \wedge \Diamond A)$.
So it is sufficient to show $B\wedge \Diamond A\rhd B\wedge\Box\neg A$.
We have:
\[
\begin{array}{llll}
B\wedge \Diamond A & \rhd &  \Diamond^{[k]} B & \mbox{ by $(\ast)$}\\
 &\rhd & \Diamond^{[k]} (B \wedge \Box^{[k]} \neg B) & \mbox{ by \finprin{L_3}{k} }\\
 &\rhd & B \wedge \Box^{[k]} \neg B & \mbox{ by \finprin{J_5}{k} and \finintext{k} }\\
  &\rhd & B \wedge \Box \neg A & \mbox{ by $(**)$}.
\end{array}
\]

\subsection{The principle \principle{M_0}}

Another good test case is the principle \principle{M_0},
    since both $\extil{W} \nvdash \principle{M_0}$ and $\extil{M_0} \nvdash \principle{W}$.
Although we will later demonstrate the method for the principle \principle{R} too
    and $\extil{R} \vdash \principle{M_0}$,
    the proof for \principle{R} is more complex.
For this reason we include the principle \principle{M_0}.
    
We start with the \extil{P}-proof of \principle{M_0}: $A\rhd B \to (\Diamond A\wedge \Box C)\rhd (B\wedge\Box C)$.

\begin{fact}\label{pm0}
$\ilp \vdash \principle{M_0}$.
\end{fact}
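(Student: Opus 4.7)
The plan is to mirror the structure of the $\ilp$-proof of $\principle{W}$ already given in Fact \ref{pw}: extract $\Box(A\rhd B)$ from the assumption via \principle{P}, push this boxed fact through $\prin{J_4}$ to reach $\Box(\Diamond A\to \Diamond B)$, combine with the purely modal observation $\Box C\to \Box\Box C$ (an instance of $\prin{L_2}$), and then close the argument with $\prin{J_1}$, $\prin{J_5}$, and transitivity $\prin{J_2}$.

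Concretely, I would reason in $\ilp$ and assume $A\rhd B$. An application of \principle{P} yields $\Box(A\rhd B)$, and then $\prin{J_4}$ inside the box gives $(\ast)$ $\Box(\Diamond A\to \Diamond B)$. By necessitation applied to the $\gl$-theorem $\Box C\to \Box\Box C$ I also obtain $(\ast\ast)$ $\Box(\Box C\to \Box\Box C)$. Combining $(\ast)$ and $(\ast\ast)$ propositionally under the box produces
\[
\Box\bigl(\,\Diamond A\wedge \Box C\ \to\ \Diamond B\wedge \Box\Box C\,\bigr).
\]
Since $\Diamond B\wedge \Box\Box C\to \Diamond(B\wedge \Box C)$ is already a theorem of $\textsf{K}$, a further application of necessitation and normal distribution upgrades this to
\[
\Box\bigl(\,\Diamond A\wedge \Box C\ \to\ \Diamond(B\wedge \Box C)\,\bigr).
\]

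From here the chain
\[
\Diamond A\wedge \Box C\ \rhd\ \Diamond(B\wedge \Box C)\ \rhd\ B\wedge \Box C
\]
is immediate: the first step by $\prin{J_1}$, the second by $\prin{J_5}$, and one last appeal to $\prin{J_2}$ gives $(\Diamond A\wedge \Box C)\rhd (B\wedge \Box C)$, which is $\principle{M_0}$. The only point requiring any care, and thus the main (small) obstacle, is the internal passage $\Diamond B\wedge \Box\Box C\to \Diamond(B\wedge \Box C)$: this is exactly where $\prin{L_2}$ earns its keep, since without transitivity of $\Box$ one would be stuck with $\Diamond B\wedge \Box C$ and unable to carry $\Box C$ into the diamond. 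Everything else is a direct analogue of the $\principle{W}$-pattern from Fact \ref{pw}.
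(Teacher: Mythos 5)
Your proof is correct and follows essentially the same route as the paper's: \principle{P} to box the hypothesis, $\prin{J_4}$ to get $\Box(\Diamond A\to\Diamond B)$, then $\prin{J_1}$, $\prin{J_5}$ and $\prin{J_2}$ to finish. The only difference is cosmetic — you make explicit the appeal to $\prin{L_2}$ needed to carry $\Box C$ inside the diamond, a step the paper leaves implicit in its \ilp-proof but likewise flags via $\finprin{L_2}{k}$ in its subsequent \atl adaptation.
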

\begin{proof} Reason in \extil{P}. 
\[
\begin{array}{llll}
A\rhd B & \to & \Box (A \rhd B ) &  \mbox{by \principle{P}} \\
&\to & \Box (\Diamond A \rightarrow \Diamond B) &  \mbox{by \prin{J_4}} \\
&\to & \Box (\Diamond A \wedge \Box C \rightarrow \Diamond B \wedge \Box C) & \\
&\to & \Diamond A \wedge \Box C \rhd \Diamond B \wedge \Box C &  \mbox{by \prin{J_1}} \\
&\to & \Diamond A \wedge \Box C \rhd \Diamond (B \wedge \Box C) &  \\
&\to & \Diamond A \wedge \Box C \rhd B \wedge \Box C & \mbox{by \prin{J_5}.}
\end{array}
\]
\end{proof}

Now we adapt this proof to fit \atl. 
We will not write the more formal version of the proof 
    (see the commentary in Subsection \ref{subsection:W}).

\paragraph{\principle{P}-style soundness proof of \principle{M_0}}

Reason in \atl. 

\[
\begin{array}{llll}
    A\rhd B & \to & \Box (A \rhd^{[k]} B ) &  \mbox{by \finprin{P}{k} } \\
    &\to & \Box (\Diamond A \rightarrow \Diamond^{[k]} B) &  \mbox{by \finprin{J_4}{k}} \\
    &\to & \Box (\Diamond A \wedge \Box C \rightarrow \Diamond^{[k]} B \wedge \Box C) & \\
    &\to & \Diamond A \wedge \Box C \rhd \Diamond^{[k]} B \wedge \Box C &  \mbox{by \prin{J_1}} \\
    &\to & \Diamond A \wedge \Box C \rhd \Diamond^{[k]} B \wedge \Box^{[k]} \Box C &  \mbox{by \finprin{L_2}{k}} \\
    &\to & \Diamond A \wedge \Box C \rhd \Diamond^{[k]} (B \wedge \Box C) &  \\
    &\to & \Diamond A \wedge \Box C \rhd^{[k]} B \wedge \Box C. & \mbox{by \finprin{J_5}{k}} \\
    &\to & \Diamond A \wedge \Box C \rhd B \wedge \Box C & \mbox{by \finintext{k}.}
\end{array}
\]

\subsection{The principle \principle{R}}

As a final example, we will prove that the principle \principle{R}: $A\rhd B \to \neg (A \rhd \neg C ) \rhd  B \wedge \Box C $ is arithmetically valid.

Before we see that $\ilp \vdash \principle{R}$, we first
prove an auxiliary lemma.

\begin{lemma}\label{lemm:kleinehulp}
$\il \vdash \neg (A \rhd \neg C )\wedge (A \rhd B )
\rightarrow \Diamond (B \wedge \Box C )$.
\end{lemma}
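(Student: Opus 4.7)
The plan is to argue by contraposition: assume $A \rhd B$ together with $\neg\, \Diamond(B \wedge \Box C)$, and derive $A \rhd \neg\, C$, thus contradicting $\neg\, (A \rhd \neg\, C)$. This reduces the lemma to manipulating the assumption $\neg\, \Diamond(B \wedge \Box C)$ into something that can be combined, via transitivity of $\rhd$, with $A\rhd B$.

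First I would rewrite $\neg\, \Diamond(B \wedge \Box C)$ in the equivalent form $\Box\,(B \to \Diamond\, \neg\, C)$, which is a trivial bit of propositional/modal bookkeeping (using the definition of $\Diamond$). From this, the axiom \prin{J_1} immediately yields $B \rhd \Diamond\, \neg\, C$. Next, \prin{J_5} gives $\Diamond\, \neg\, C \rhd \neg\, C$, and combining with the previous step via \prin{J_2} produces $B \rhd \neg\, C$. Finally, applying \prin{J_2} once more to $A \rhd B$ and $B \rhd \neg\, C$ yields the desired contradictory $A \rhd \neg\, C$.

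All that remains is to package this into a proper implication: from $\neg\,(A \rhd \neg\, C) \wedge (A\rhd B)$, the contrapositive form of the above chain delivers $\Diamond(B \wedge \Box C)$, which is the conclusion. I do not anticipate any real obstacle, as the argument is almost purely syntactic and uses only the core axioms \prin{J_1}, \prin{J_2}, \prin{J_5} of \il together with classical propositional reasoning; the only mild subtlety is keeping the duality between $\Box$ and $\Diamond$ straight when converting $\neg\, \Diamond(B \wedge \Box C)$ to the form $\Box(B\to \Diamond\, \neg\, C)$ needed to invoke \prin{J_1}.
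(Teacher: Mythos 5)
Your proposal is correct and follows essentially the same route as the paper: both pass to the contrapositive form $(A \rhd B) \wedge \Box(B \to \Diamond\,\neg\, C) \to A \rhd \neg\, C$ and then chain \prin{J_1}, \prin{J_5}, and \prin{J_2}. The only (immaterial) difference is the order in which the two applications of transitivity are performed.
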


\begin{proof}
We prove the \il-equivalent formula
$(A \rhd B ) \wedge \Box (B \rightarrow \Diamond \neg C )
\rightarrow A \rhd \neg C $. But this is clear, 
as 
$\il \vdash (A \rhd B ) \wedge \Box (B \rightarrow \Diamond \neg C )
\rightarrow A \rhd  \Diamond \neg C$
and 
$\il \vdash  \Diamond \neg C \rhd \neg C $.
\end{proof}
 
\begin{fact}
$\ilp \vdash \principle{R}$.
\end{fact}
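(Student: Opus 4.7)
The plan is to combine Lemma~\ref{lemm:kleinehulp} with the hallmark axiom of \ilp, namely the principle \principle{P}, in a very direct way. Once the lemma is available, essentially all we need is to pull $A \rhd B$ under a $\Box$ so that it can be combined with the necessitated form of the lemma.

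More concretely, I would reason in \ilp as follows. Assume $A \rhd B$. By \principle{P} this immediately yields $\Box (A \rhd B)$. Next, apply necessitation to Lemma~\ref{lemm:kleinehulp} to obtain
\[
\Box\bigl(\neg (A \rhd \neg C)\wedge (A \rhd B) \rightarrow \Diamond (B \wedge \Box C)\bigr),
\]
and use \prin{L_1} together with $\Box (A \rhd B)$ to distribute this into
\[
\Box\bigl(\neg (A \rhd \neg C) \rightarrow \Diamond (B \wedge \Box C)\bigr).
\]

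From here \prin{J_1} gives $\neg (A \rhd \neg C) \rhd \Diamond (B \wedge \Box C)$. Finally, combining with \prin{J_5}, i.e.\ $\Diamond (B \wedge \Box C) \rhd (B \wedge \Box C)$, and the transitivity axiom \prin{J_2}, we arrive at $\neg (A \rhd \neg C) \rhd (B \wedge \Box C)$, which is exactly \principle{R}.

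There is no real obstacle: the auxiliary Lemma~\ref{lemm:kleinehulp} does all the \il-level work by converting the conjunction of the premise and the negated obstruction into a $\Diamond$-statement about $B\wedge\Box C$, and the role of \principle{P} is solely the trivial but crucial one of allowing the hypothesis $A\rhd B$ to be used inside a box. The only thing to be mildly careful about is performing the combination under $\Box$ in the correct order, so that the final appeal to \prin{J_1} and \prin{J_5} followed by transitivity \prin{J_2} produces the intended conclusion rather than something weaker.
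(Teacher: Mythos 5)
Your proof is correct and follows essentially the same route as the paper's: both use \principle{P} to box the hypothesis $A\rhd B$, feed it into the necessitated Lemma~\ref{lemm:kleinehulp}, and finish with \prin{J_1}, \prin{J_5} and \prin{J_2}. The only (immaterial) difference is that you perform the combination under the $\Box$ via \prin{L_1} before a single application of \prin{J_1}, whereas the paper chains two $\rhd$-steps and composes them with \prin{J_2}.
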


\begin{proof}
We reason in \extil{P}. Suppose
$A \rhd B$. It follows that   $\Box (A \rhd B )$. Using this we get:
\[
\begin{array}{llll}
\neg (A \rhd \neg C )& \rhd &\neg (A \rhd \neg C ) \wedge (A \rhd B )\\
 &\rhd& \Diamond (B \wedge \Box C ) & \mbox{by Lemma \ref{lemm:kleinehulp}}\\
 &\rhd& B \wedge \Box C  & \mbox{by \prin{J_5}.}
\end{array}
\]
\end{proof}

\paragraph{\principle{P}-style soundness proof of \principle{R}}
Reason in \atl. We first show that
$(A \rhd^{[k]} B ) \wedge \neg ( A \rhd \neg C )
\rightarrow \Diamond^{[k]} ( B \wedge \Box C )$.
We show an equivalent claim
$(A \rhd^{[k]} B ) \wedge \Box^{[k]} ( B \to \Diamond \neg C )
\rightarrow  A \rhd \neg C$.

Suppose that $A \rhd^{[k]} B$ and
$\Box^{[k]} (B \rightarrow \Diamond \neg C )$.
Thus, $A \rhd^{[k]} \Diamond \neg C$ by \finprin{J_2}{k}{\sf b}.
By  \finprin{J_5}{k} we get $A \rhd \neg C$, as required.
By necessitation,
\begin{equation}\label{equa:formallemmaatje}
\Box ((A \rhd^{[k]} B ) \wedge \neg ( A \rhd \neg C )
\rightarrow \Diamond^{[k]} ( B \wedge \Box C )).
\end{equation}
We now turn to the main proof. Suppose
$A \rhd B$. Then, for some $k$, we have $\Box (A \rhd^{[k]} B )$
and, thus,
\[
\begin{array}{llll}
\neg ( A \rhd \neg C ) &\rhd& 
\neg (A \rhd \neg C ) \wedge (A \rhd^{[k]} B )&
\\
 &\rhd&\Diamond^{[k]}(B \wedge \Box C)& \mbox{ by (\ref{equa:formallemmaatje})}\\
 &\rhd&B \wedge \Box C. & \mbox{ by \finprin{J_5}{k} and \finintext{k}}
\end{array}
\]

\section{Two series of principles}\label{section:TwoSeries}

In \cite{GorisJoosten:2020:TwoSeries} two series of interpretability principles are presented. One series is called the \emph{broad series}, denoted $\principle{R}^n$ (for $n\in \omega$). The other series is called the \emph{slim hierarchy}, denoted $\principle{R}_n$ (for $n\in \omega$). The latter is actually a hierarchy of principles of increasing logical strength.

Both series of principles are proven to be arithmetically sound in any reasonable arithmetical theory. The methods used to prove this soundness in \cite{GorisJoosten:2020:TwoSeries} involve definable cuts and in essence can be carried out in the system called ${\sf CuL}$. In the next two sections we will see how both series admit a soundness proof based on the method of finite approximations of target theories as embodied in our logic \atl. 
\ignore{We will also use this opportunity to state the results concerning modal semantics
    we obtained in collaboration with Jan Mas Rovira,
    which concern the two series.
The proofs of these results can be found in his Master's thesis
    \cite{MasRovira:2020:MastersThesis}.}

\subsection{Arithmetical soundness of the slim hierarchy}

As already mentioned, the slim hierarchy $\mathsf{R}_n$ defined 
    in \cite{GorisJoosten:2020:TwoSeries} 
    is actually a hierarchy.
Thus, to prove arithmetical soundness 
    it suffices to study a cofinal sub-series.
In our case we will study the certain sub-series $\widetilde{\mathsf R}_n$.
Let us define the original sequence first;
    even though we will use the sub-series for the most part.
Let $a_i, b_i, c_i$ and $e_i$ denote different propositional variables, for all $i\in \omega$.
We define a series of principles as follows.    
\[
\begin{array}{lll}
\principle{R_0} &:=& a_0 \rhd b_0 \to \neg (a_0 \rhd \neg c_0) \rhd b_0 \wedge \Box c_0\\
\ & \ & \ \\ 
\principle{R_{2n+1}}&:= & R_{2n} [\neg (a_n\rhd \neg c_n) /\neg (a_n\rhd \neg c_n) \wedge (e_{n+1}\rhd \Diamond a_{n+1});\\
\ & \ & 
b_n \wedge \Box c_n/b_n \wedge \Box c_n \wedge (e_{n+1} \rhd a_{n+1})]\\
\ & \ & \ \\ 
\principle{R_{2n+2}}&:=& R_{2n+1} [b_n/ b_n \wedge (a_{n+1}\rhd b_{n+1});\\
\ & \ & 
\Diamond a_{n+1} / \neg (a_{n+1}\rhd \neg c_{n+1});\\
\ & \ & 
(e_{n+1}\rhd a_{n+1})/ (e_{n+1}\rhd a_{n+1}) \wedge (e_{n+1} \rhd b_{n+1}\wedge \Box c_{n+1})
]\\
\end{array}
\]
    
We proceed with defining the sub-series $\widetilde{\mathsf R}_n$  (see \cite{GorisJoosten:2020:TwoSeries}, below Lemma 3.1) where the $\widetilde{\mathsf R}_n$ hierarchy exhausts the even entries of the original ${\mathsf R}_n$ hierarchy:    
\begin{align*}
    \mathsf{X}_0 &:= A_0 \rhd B_0 \\
    \mathsf{X}_{n + 1} &:= A_{n + 1} \rhd B_{n + 1} \wedge (\mathsf{X}_{n}) \\
    \mathsf{Y}_0 &:= \neg (A_0 \rhd \neg C_0) \\
    \mathsf{Y}_{n + 1} &:= \neg (A_{n + 1} \rhd \neg C_{n + 1}) \wedge (E_{n + 1} \rhd \mathsf{Y}_n) \\
    \mathsf{Z}_0 &:= B_0 \wedge \Box C_0 \\
    \mathsf{Z}_{n + 1} &:= B_{n + 1} \wedge (\mathsf{X}_{n}) \wedge \Box C_{n + 1}  \wedge (E_{n+1} \rhd A_n) \wedge (E_{n + 1} \rhd \mathsf{Z}_n) \\
    \widetilde{\mathsf R}_n &:= \mathsf{X}_n \to \mathsf{Y}_n \rhd \mathsf{Z}_n.
\end{align*}
For convenience, define $\mathsf{X}_{-1} = \top$.
With this we have $\mathsf{X}_n \equiv_\il A_n \rhd B_n \wedge (\mathsf{X}_{n - 1})$ for all $n \in \omega$. The first two schemas are:
\[
\begin{array}{lll}
\widetilde{\mathsf R}_0 &:=& A_0 \rhd B_0 \to \neg (A_0 \rhd \neg C_0) \rhd B_0 \wedge \Box C_0;\\

\widetilde{\mathsf R}_1 &:=& A_1 \rhd B_1 \wedge (A_0 \rhd B_0) \to \neg (A_1 \rhd \neg C_1) \wedge (E_1 \rhd \neg (A_0\rhd \neg C_0)) \ \rhd\\
\ & &\ \ \ \ \ \ \ \ \ \ \ \  \ \ B_1 \wedge (A_0 \rhd B_0) \wedge \Box  C_1  \wedge (E_1\rhd A_0) \wedge (E_1\rhd B_0 \wedge \Box C_0).\\

\end{array}
\]

In the proof that $\atl \vdash \widetilde{\mathsf R}_n$ (Theorem \ref{slim_soundness}) we use the following lemma.
\begin{lemma}
    \label{lemma_slim}
    For all $n \in \omega$, and all interpretation variables $k$: \[
        \atl \vdash (A_{n } \rhd^k B_{n } \wedge \bnb{\mathsf{X}_{n - 1})}
            \wedge \mathsf{Y}_{n } \to \Diamond^k\, \bnb{\mathsf{Z}_{n }}.
    \] 
\end{lemma}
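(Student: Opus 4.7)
We proceed by induction on $n$. Throughout, $k$ is to be thought of as a fresh interpretation variable introduced by a \finprin{P}{k}-call at the level of the main theorem's proof when applied to $\mathsf{X}_n$; hence the licences \finboxext{k} and \finintext{k} are tacitly available inside the proof of the lemma.

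For the base case $n = 0$, we have $\mathsf{Y}_0 = \neg(A_0 \rhd \neg C_0)$ and $\mathsf{Z}_0 = B_0 \wedge \Box C_0$, so the claim reduces to $(A_0 \rhd^k B_0) \wedge \neg (A_0 \rhd \neg C_0) \to \Diamond^k(B_0 \wedge \Box C_0)$. Assume for contradiction $\Box^k(B_0 \to \Diamond \neg C_0)$. From $A_0 \rhd^k B_0$ we obtain $A_0 \rhd^k \Diamond \neg C_0$ by $\finprin{J_2}{k}{\sf b}$, and then $A_0 \rhd \neg C_0$ by $\finprin{J_5}{k}$, contradicting the second hypothesis. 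This is essentially the \atl-analogue of the \ilp-proof of \principle{R} from Subsection~6.3.

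For the inductive step, decompose $\mathsf{Y}_{n+1}$ into (ii) $\neg (A_{n+1} \rhd \neg C_{n+1})$ and (iii) $E_{n+1} \rhd \mathsf{Y}_n$. Applying the base-case argument with $(A_{n+1}, B_{n+1} \wedge \mathsf{X}_n, C_{n+1})$ in the roles of $(A_0, B_0, C_0)$ delivers $\Diamond^k(B_{n+1} \wedge \mathsf{X}_n \wedge \Box C_{n+1})$. Since $\mathsf{Z}_{n+1}$ extends this formula only by the additional conjuncts $(E_{n+1} \rhd A_n) \wedge (E_{n+1} \rhd \mathsf{Z}_n)$, and since $\Diamond^k X \wedge \Box^k Y \to \Diamond^k(X \wedge Y)$ is valid, it remains to establish $\Box^k((E_{n+1} \rhd A_n) \wedge (E_{n+1} \rhd \mathsf{Z}_n))$. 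For $E_{n+1} \rhd A_n$, the conjunct $\neg(A_n \rhd \neg C_n)$ of $\mathsf{Y}_n$ makes $\mathsf{Y}_n \to \Diamond A_n$ an \il-theorem, hence $\mathsf{Y}_n \rhd A_n$ by \prin{J_1}, \prin{J_5} and \prin{J_2}; transitivity with (iii) then yields $E_{n+1} \rhd A_n$. For $E_{n+1} \rhd \mathsf{Z}_n$ we genuinely use the induction hypothesis: necessitating the IH and combining it by \prin{J_1} with a \finprin{P}{k'}-boxed approximation of the $\rhd$-fact $\mathsf{X}_n$ (accessed through $A_{n+1} \rhd^k \mathsf{X}_n$, which follows from (i) by $\finprin{J_2}{k}{\sf b}$) produces $\mathsf{Y}_n \rhd \mathsf{Z}_n$, whence (iii) and \prin{J_2} close the case. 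Finally, to pull both derived facts under $\Box^k$, one applies fresh \finprin{P}{k_1}-calls to them, uses \finboxext{k} to cross into $\Box^k$, and invokes \finintext{k_1} to strip the auxiliary superscript.

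The main obstacle is this second conjunct. Hypothesis (iii) sits at the outer level, yet the IH of the lemma demands an approximated $A_n \rhd^{k'} B_n \wedge \mathsf{X}_{n-1}$, and $\mathsf{X}_n$ itself only occurs inside the $\rhd^k$-image in (i). The required bookkeeping of \finprin{P}-applications and licence uses that routes $\mathsf{X}_n$ through the $E_{n+1}$-environment is an enriched version of the label juggling already encountered in the \atl-soundness proofs of \principle{R} and \principle{M_0} in Section~6.
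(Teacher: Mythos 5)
Your base case coincides with the paper's (contrapose, use $\finprin{J_2}{k}{\sf b}$ and $\finprin{J_5}{k}$), and your treatment of the conjunct $E_{n+1}\rhd A_n$ is workable (the paper gets $\Box^k(E_{n+1}\rhd A_n)$ in one stroke from $E_{n+1}\rhd\Diamond A_n$ via the generalised $\principle{P_0}$ rule of Lemma~\ref{gen_p0}). The inductive step, however, has a genuine gap at exactly the point you flag and then defer. Your plan reduces the claim to $\Diamond^k(B_{n+1}\wedge\mathsf{X}_n\wedge\Box C_{n+1})$ together with $\Box^k\bigl((E_{n+1}\rhd A_n)\wedge(E_{n+1}\rhd\mathsf{Z}_n)\bigr)$. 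But the subgoal $\Box^k(E_{n+1}\rhd\mathsf{Z}_n)$ is not derivable from the hypotheses: any derivation of $E_{n+1}\rhd\mathsf{Z}_n$ must invoke the induction hypothesis, which needs an approximated $A_n\rhd^{\ell}B_n\wedge\mathsf{X}_{n-1}$ in the same context as $\mathsf{Y}_n$, and the only source of $\mathsf{X}_n$ is the conjunct sitting \emph{inside} the $\rhd^k$ of hypothesis $(A_{n+1}\rhd^k B_{n+1}\wedge\mathsf{X}_n)$. That hypothesis only guarantees $\mathsf{X}_n$ in the $\Diamond^k$-witnessing situation, not in all $\Box^k$-situations; your own attempted derivation of $\mathsf{Y}_n\rhd\mathsf{Z}_n$ ``at the outer level'' from $A_{n+1}\rhd^k\mathsf{X}_n$ fails for the same reason. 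This is not bookkeeping that can be postponed: it forces a different architecture.

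The paper's proof avoids the problem by never splitting off the extra conjuncts under a separate $\Box^k$. It contraposes the whole claim (so the negation of $\Diamond^k\mathsf{Z}_{n+1}$ becomes a $\Box^k$-hypothesis of the form $\Box^k(B_{n+1}\wedge\mathsf{X}_n\to\Diamond\neg C_{n+1}\vee\neg(E_{n+1}\rhd A_n)\vee\neg(E_{n+1}\rhd\mathsf{Z}_n))$), keeps a single formula $A_{n+1}\rhd^k(\cdots)$ whose right-hand side carries $\mathsf{X}_n$ together with the disjunction, imports $E_{n+1}\rhd^{j}\mathsf{Y}_n$ under that $\rhd^k$ via an application of $\finprin{P}{\la\ra,k,j}$ and $\finprin{J_2}{k}{\sf b}$, applies a further $\finprin{P}{\la\ra,j,\ell}$ to $\mathsf{X}_n$ \emph{in that context} so that the induction hypothesis can be used under the $\rhd^{j}$, and only then collapses the disjunct $\neg(E_{n+1}\rhd\mathsf{Z}_n)$ against the derived $E_{n+1}\rhd\mathsf{Z}_n$ before finishing with $\finprin{J_5}{k,\la\ra}$. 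To repair your proof you would have to weaken your second subgoal from $\Box^k(E_{n+1}\rhd\mathsf{Z}_n)$ to a conditional statement available under $\Box^k$ whose antecedent is discharged by $\mathsf{X}_n$ inside the $\Diamond^k$-witness --- which is, in effect, the paper's contrapositive argument.
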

\begin{proof}
    Let $n = 0$ and fix $k$. We are to prove \[
        \atl \vdash (A_0 \rhd^k B_0 \wedge \top) \wedge \neg(A_0 \rhd \neg C_0) \to \Diamond^k (B_0 \wedge \Box C_0).
    \]
    Equivalently, \[
        \atl \vdash (A_0 \rhd^k B_0) \wedge \Box^k(B_0 \to \Diamond \neg C_0) \to A_0 \rhd \neg C_0.
    \]
    Assume $(A_0 \rhd^k B_0) \wedge \Box^k(B_0 \to \Diamond \neg C_0)$.
    By $\finprin{J_2}{k}{\sf b}$, this yields $A_0 \rhd^k \Diamond \neg C_0$, whence by $\finprin{J_5}{k}$, $A_0 \rhd \neg C_0$.
    
    Let us now prove the claim for $n + 1$. Fix $k$.
    Unpacking, we are to show that:
    \begin{align*}
        \atl \vdash\, & (A_{n + 1} \rhd^k B_{n + 1} \wedge \bnb{\mathsf{X}_{n}}) \wedge 
        \neg (A_{n + 1} \rhd \neg C_{n + 1}) \wedge (E_{n + 1} \rhd \mathsf{Y}_n) \\
        & \to \Diamond^k\Big( B_{n + 1} \wedge \bnb{\mathsf{X}_{n}} \wedge \Box C_{n + 1}  \wedge (E_{n+1} \rhd A_n) \wedge (E_{n + 1} \rhd \mathsf{Z}_n)  \Big).
    \end{align*}
    Equivalently, we are to show that:
    \begin{align}
        \label{eq:goal}
        \begin{split}
            \atl \vdash\, & (A_{n + 1} \rhd^k B_{n + 1} \wedge \bnb{\mathsf{X}_{n}}) \wedge 
            (E_{n + 1} \rhd \mathsf{Y}_n) \\
            & \wedge \Box^k\Big( (B_{n + 1} \wedge \bnb{\mathsf{X}_{n}}) \to \Diamond \neg C_{n + 1} \vee \neg (E_{n+1} \rhd A_n) \vee \neg (E_{n + 1} \rhd \mathsf{Z}_n)\Big)\\
            &\to A_{n + 1} \rhd \neg C_{n + 1}.          
        \end{split}
    \end{align}
    
    Assume the conjunction on the left-hand side of \eqref{eq:goal}.
    The first and the third conjunct imply \[
        A_{n + 1} \rhd^k B_{n + 1} \wedge \bnb{\mathsf{X}_{n}} \wedge \Big( \Diamond \neg C_{n + 1} \vee \neg (E_{n+1} \rhd A_n) \vee \neg (E_{n + 1} \rhd \mathsf{Z}_n) \Big),
    \]
    whence by weakening,
    \begin{equation}
        \label{eq:with_long_disj}
        A_{n + 1} \rhd^k \bnb{\mathsf{X}_{n}} \wedge \Big( \Diamond \neg C_{n + 1} \vee \neg (E_{n+1} \rhd A_n) \vee \neg (E_{n + 1} \rhd \mathsf{Z}_n) \Big). 
    \end{equation}
    We now aim to get $A_{n + 1} \rhd^k \Diamond \neg C_{n + 1}$.
    To this end, we set out to eliminate the last two disjuncts within \eqref{eq:with_long_disj}.
    
    From $E_{n + 1} \rhd \mathsf{Y}_n$ (the second conjunct on the left-hand side of \eqref{eq:goal}) we have $E_{n + 1} \rhd \neg (A_n \rhd \neg C_n)$, thus $E_{n + 1} \rhd \Diamond A_n$, whence $\Box^k (E_{n + 1} \rhd A_n)$ by the generalised \principle{P_0} (Lemma \ref{gen_p0}). 
    We now combine $\Box^k (E_{n + 1} \rhd A_n)$ with \eqref{eq:with_long_disj}, simplify and weaken to obtain
    \begin{equation}
        \label{eq:with_shorter_disj}
        A_{n + 1} \rhd^k \bnb{\mathsf{X}_{n}} \wedge ( \Diamond \neg C_{n + 1} \vee \neg (E_{n + 1} \rhd \mathsf{Z}_n)).
    \end{equation}
    Thus, we have eliminated the second disjunct within \eqref{eq:with_long_disj}, and we are left to eliminate $\neg (E_{n + 1} \rhd \mathsf{Z}_n)$. 
    We will now use the second conjunct on the left-hand side of \eqref{eq:goal}, $E_{n + 1} \rhd \mathsf{Y}_n$, again.
    We wish to apply the rule $\finprin{P}{\la \ra, k, j}$, so assume $\Box^k(E_{n + 1} \rhd^j \mathsf{Y}_n)$.
    Combining $\Box^k(E_{n + 1} \rhd^j \mathsf{Y}_n)$ with \eqref{eq:with_shorter_disj} and unpacking $\mathsf{X}_n$, we obtain
    {\small
    \begin{equation}
        \label{eq:with_shorter_disj2}
        A_{n + 1} \rhd^k (A_{n } \rhd B_{n } \wedge \bnb{\mathsf{X}_{n - 1}}) \wedge (E_{n + 1} \rhd^j \mathsf{Y}_n) \wedge ( \Diamond \neg C_{n + 1} \vee \neg (E_{n + 1} \rhd \mathsf{Z}_n)).
    \end{equation}
    }
    Reason under $\Box^k$. 
    We wish to apply the rule $\finprin{P}{\la \ra, j, \ell}$ with $A_{n } \rhd B_{n } \wedge \bnb{\mathsf{X}_{n - 1}}$, so assume $\Box^j( A_{n } \rhd^\ell B_{n } \wedge \bnb{\mathsf{X}_{n - 1}} )$.
    Combining $\Box^j( A_{n } \rhd^\ell B_{n } \wedge \bnb{\mathsf{X}_{n - 1}} )$ 
    with $E_{n + 1} \rhd^j \mathsf{Y}_n$ we obtain (still under the $\Box^k$) that $E_{n + 1} \rhd^j ( A_{n } \rhd^\ell B_{n } \wedge \bnb{\mathsf{X}_{n - 1}} ) \wedge \mathsf{Y}_n $.
    Applying this to \eqref{eq:with_shorter_disj2} we may conclude
    \forlater{\luknote{Do you have a preference here, e.g. all same size, large only for some nesting level etc.?}
    \albnote{I am for a tasteful growth in size for the more outer brackets.}}
    {\small
    \[
         A_{n + 1} \rhd^k \Big( E_{n + 1} \rhd^j ( A_{n } \rhd^\ell B_{n } \wedge (\mathsf{X}_{n - 1}) ) \wedge \mathsf{Y}_n \Big) \wedge ( \Diamond \neg C_{n + 1} \vee \neg (E_{n + 1} \rhd \mathsf{Z}_n)).
    \]
    }
    The induction hypothesis allows us to replace $A_{n } \rhd^\ell B_{n } \wedge (\mathsf{X}_{n - 1} ) \wedge \mathsf{Y}_n$ with $\Diamond^\ell (\mathsf{Z}_n)$.
    \[
         A_{n + 1} \rhd^k (E_{n + 1} \rhd^j \Diamond^\ell (\mathsf{Z}_n)) \wedge ( \Diamond \neg C_{n + 1} \vee \neg (E_{n + 1} \rhd \mathsf{Z}_n)).
    \]
    By $\finprin{J_5}{j, \ell}$,
    \[
        A_{n + 1} \rhd^k (E_{n + 1} \rhd^\ell \mathsf{Z}_n) \wedge ( \Diamond \neg C_{n + 1} \vee \neg (E_{n + 1} \rhd \mathsf{Z}_n)).
    \]
    By our last application of \finprin{P}{\la \ra, j, \ell} and $\finintext{\ell}$, we can substitute $\rhd$ for $\rhd^\ell$:
    \[
        A_{n + 1} \rhd^k (E_{n + 1} \rhd \mathsf{Z}_n) \wedge ( \Diamond \neg C_{n + 1} \vee \neg (E_{n + 1} \rhd \mathsf{Z}_n)).
    \]
    Finally, we can simplify, weaken and apply \finprin{J_5}{k,\la \ra} to obtain $A_{n + 1} \rhd \neg C_{n + 1}$.
\end{proof}
We can now prove soundness for the slim hierarchy. 
It suffices to do this for the cofinal sub-hierarchy $\widetilde{\mathsf R}_n$.
\begin{theorem}
    \label{slim_soundness}
    For all $n \in \omega$, $\atl \vdash \widetilde{\mathsf R}_n$.
\end{theorem}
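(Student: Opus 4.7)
The plan is to derive the theorem as a direct wrapper around Lemma \ref{lemma_slim}, using a single application of the rule \finprin{P}{\la\ra, \la\ra, k} to a fresh interpretation variable $k$. Note that $\mathsf{X}_n$ is $\il$-equivalent to $A_n \rhd (B_n \wedge \mathsf{X}_{n-1})$ (using the convention $\mathsf{X}_{-1} := \top$, which covers the base case uniformly), so the major premise has exactly the shape required by \finprin{P}{\la\ra, \la\ra, k}.

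Fix $n \in \omega$ and reason in \atl. Assume $\mathsf{X}_n$, i.e.\ $A_n \rhd (B_n \wedge \mathsf{X}_{n-1})$. I would invoke \finprin{P}{\la\ra, \la\ra, k} with a fresh interpretation variable $k$: this replaces the major premise by $\Box(A_n \rhd^k (B_n \wedge \mathsf{X}_{n-1}))$ and makes available the auxiliary set $\varDelta$ consisting of the relevant instances of \finboxext{k} and \finintext{k}. It then suffices to derive $\mathsf{Y}_n \rhd \mathsf{Z}_n$ from these new ingredients.

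By Lemma \ref{lemma_slim} and necessitation, we have
\[
\Box\bigl((A_n \rhd^k (B_n \wedge \mathsf{X}_{n-1})) \wedge \mathsf{Y}_n \to \Diamond^k \mathsf{Z}_n\bigr).
\]
Combining this with our upgraded assumption $\Box(A_n \rhd^k (B_n \wedge \mathsf{X}_{n-1}))$ via $\finprin{L_1}{\la\ra}$, we obtain $\Box(\mathsf{Y}_n \to \Diamond^k \mathsf{Z}_n)$. Then \prin{J_1} yields $\mathsf{Y}_n \rhd \Diamond^k \mathsf{Z}_n$, and $\finprin{J_5}{\la\ra, k}$ strips the $\Diamond^k$ to give $\mathsf{Y}_n \rhd^k \mathsf{Z}_n$. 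A final application of \finintext{k}, licensed through $\varDelta$, removes the label to produce $\mathsf{Y}_n \rhd \mathsf{Z}_n$. Discharging via the outer \finprin{P}{\la\ra, \la\ra, k} then gives $\mathsf{X}_n \vdash_\atl \mathsf{Y}_n \rhd \mathsf{Z}_n$, i.e., $\widetilde{\mathsf R}_n$.

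I do not expect a genuine obstacle here: the combinatorial heavy lifting (in particular the nested inductive handling of the $E_{n+1}\rhd\dots$ clauses in $\mathsf{Y}_n$ and $\mathsf{Z}_n$) has already been absorbed into Lemma \ref{lemma_slim}, so the main thing to watch is bookkeeping around the rule \finprin{P}{\la\ra, \la\ra, k}: namely, that $k$ is fresh in $\mathsf{Y}_n, \mathsf{Z}_n$ and that the two auxiliary principles actually suffice at the one place each is used (the \finintext{k}-step at the very end, with \finboxext{k} not strictly needed in this outer proof but available if a cleaner reformulation of the combination step is preferred).
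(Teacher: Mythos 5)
Your proof is correct and follows essentially the same route as the paper's: a single application of \finprin{P}{\la\ra,\la\ra,k} with a fresh $k$, Lemma \ref{lemma_slim} to get $\mathsf{Y}_n \rhd \Diamond^k \mathsf{Z}_n$, then $\finprin{J_5}{\la\ra,k}$ and \finintext{k}. The only (immaterial) difference is bookkeeping: you push the lemma under a box and use $\finprin{L_1}{\la\ra}$ plus \prin{J_1}, whereas the paper first derives $\mathsf{Y}_n \rhd (A_n \rhd^k B_n \wedge \mathsf{X}_{n-1}) \wedge \mathsf{Y}_n$ and then applies the lemma via \prin{J_2}.
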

\begin{proof}
    Let $n \in \omega$ be arbitrary. 
    Assume $\Box(A_n \rhd^k B_n \wedge (\mathsf{X}_{n - 1}))$.
    Clearly \[ 
        \mathsf{Y}_n \rhd (A_n \rhd^k B_n \wedge \bnb{\mathsf{X}_{n - 1}}) \wedge \mathsf{Y}_n.
    \]
    Now Lemma \ref{lemma_slim} implies
    \[
        \mathsf{Y}_n \rhd \Diamond^k\bnb{\mathsf{Z}_n},
    \]
    whence by \finprin{J_5}{\la \ra,k},
    \[
        \mathsf{Y}_n \rhd^k \mathsf{Z}_n.
    \]
    By the rule $\finprin{P}{k}$, we can replace our assumption $\Box^k(A_n \rhd B_n \wedge \bnb{\mathsf{X}_{n - 1}})$ with $\mathsf X_n$.
    Furthermore, by the same application of $\finprin{P}{k}$, and by \finintext{k}, we have $\mathsf{Y}_n \rhd \mathsf{Z}_n$.
    Thus, $X_n \to \mathsf{Y}_n \rhd \mathsf{Z}_n$, i.e.\ $\widetilde{\mathsf R}_n$.
\end{proof}

\ignore{Finally, as we announced earlier, we quote the result obtained in collaboration 
    with Jan Mas Rovira.
To state the generalised frame condition for the principle \principle{R_1}
    (which lies strictly between $\widetilde{\mathsf R}_0$ and $\widetilde{\mathsf R}_1$) 
    we let \(R^{-1}[E] := \{x : (\exists  y\in E) xRy\}\), and \(R_x^{-1}[E] := R^{-1}[E]\cap R[x]\). 

\begin{theorem}
    The frame condition for the principle \principle{R_1} with respect
    to generalised Veltman semantics is the following condition:
\begin{flalign*}
\forall w,x,u,\mathbb{B},\mathbb{C},\mathbb{E} & \bigg(wRxRuS_w\mathbb{B}, \mathbb{C}\in \mathcal{C}(x,u) \\
& \ \Rightarrow \  (\exists  \mathbb{B}'\subseteq \mathbb{B})\Big(xS_w\mathbb{B}',R[\mathbb{B}']\subseteq \mathbb{C},(\forall v\in \mathbb{B}')(\forall c\in \mathbb{C}) \\
& \hspace{2.5em} (vRcS_xR_x^{-1}[\mathbb{E}]\Rightarrow (\exists  \mathbb{E}'\subseteq \mathbb{E})cS_v\mathbb{E}')\Big)\bigg).
\end{flalign*}
\end{theorem}
\begin{proof}
    Please see \cite{MasRovira:2020:MastersThesis} for the proof
    (including a formalisation in Agda).
\end{proof}
}

\subsection{Arithmetical soundness of the broad series}

\newcommand{\uu}{{\sf U}}

In order to define the second series we first define a series of auxiliary formulas. For any $n\geq 1$ we define the schemata $\uu_n$ as follows.
\forlater{\luknote{Old version, not sure why we wrote it like that: $\uu_{n+2} := \Diamond((D_{n+1}\rhd D_{n+2})\wedge\uu_{n+1})$}}
\begin{align*}
\uu_1 &:= \Diamond\neg(D_1\rhd \neg C),\\
\uu_{n+1} &:= \Diamond((D_{n}\rhd D_{n+1})\wedge\uu_{n}).
\end{align*}
Now, for $n\geq 0$ we define the schemata $\principle R^n$ as follows.
\begin{align*}
\principle{R}^0 &:= A\rhd B\rightarrow\neg(A\rhd \neg C)\rhd B\wedge\Box C,\\
\principle{R}^{n+1} &: = A\rhd B\rightarrow\Big( \uu_{n+1}\wedge(D_{n+1}\rhd A)\Big)\rhd B\wedge\Box C.
\end{align*}

As an illustration we present the first three principles.
\[
\begin{array}{lll}
\principle{R}^0 & :=  & A \rhd B \to \neg (A \rhd \neg C) \rhd B \wedge \Box C;\\
\principle{R}^1 &:=& A \rhd B \to \Diamond \neg(D_1 \rhd \neg C) \wedge (D_1 \rhd A)  \rhd B \wedge \Box  C;\\
\principle{R}^2 &:=& A \rhd B \to  \Diamond\Big[ (D_1 \rhd  D_2) \wedge\Diamond\neg(D_1 \rhd \neg C)\Big] \wedge (D_2 \rhd A) \rhd B \wedge \Box  C.\\


%
%
\end{array}
\]

When working with this series it is convenient to also have the following schemas:
\begin{align*}
    \mathsf V_1 &:= \Box (D_1 \rhd \neg C), \\
    \mathsf V_{n + 1} &:= \Box(D_{n} \rhd D_{n + 1} \to V_n) \mbox{ for $n\geq 1$}.
\end{align*}
Alternatively, we could have defined $\mathsf V_n := \neg \mathsf U_n$ for $n\geq 1$. 
\begin{lemma}\label{theorem:BroadHierarchyLemma}
    \label{lemma_broad1}
    For all $n \in \omega \setminus \{0\} $, and all finite sequences $\svo$ consisting of interpretation variables: \[
        \atl \vdash D_n \rhd^\svo \Diamond \neg C \to \mathsf V_{n}.
    \] 
\end{lemma}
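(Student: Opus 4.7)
The plan is to proceed by induction on $n$. In both cases the central move is to apply the rule $\finprin{P}{k}$ with a fresh interpretation variable $k$ to the hypothesis $D_n \rhd^\svo \Diamond \neg C$, which makes that hypothesis available under a $\Box$, at the cost of extending the label from $\svo$ to $\svo, k$. One then closes the gap inside this $\Box$ using $\finprin{J_5}{\svo\cdot k, \la\rangle}$ (base) or $\finprin{J_2}{\svo\cdot k}{\sf a}$ together with the induction hypothesis (step).

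For the base case $n = 1$, assume $D_1 \rhd^\svo \Diamond \neg C$. Apply $\finprin{P}{k}$ with fresh $k$ and $\svt = \la\rangle$; it then suffices to derive $\mathsf V_1 = \Box(D_1 \rhd \neg C)$ from the assumption $\Box(D_1 \rhd^{\svo, k} \Diamond \neg C)$. The axiom $\finprin{J_5}{\svo\cdot k, \la\rangle}$ yields $D_1 \rhd^{\svo, k} \Diamond \neg C \to D_1 \rhd \neg C$; necessitating this and combining with the boxed hypothesis via $\finprin{L_1}{\la\rangle}$ delivers $\mathsf V_1$.

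For the inductive step, assume the lemma holds for $n$ (for every sequence of interpretation variables) and consider $D_{n+1} \rhd^\svo \Diamond \neg C$. Again apply $\finprin{P}{k}$ with fresh $k$; it suffices to derive $\mathsf V_{n+1} = \Box(D_n \rhd D_{n+1} \to \mathsf V_n)$ from $\Box(D_{n+1} \rhd^{\svo, k} \Diamond \neg C)$. I will establish the \atl{}-implication
\[
(D_{n+1} \rhd^{\svo, k} \Diamond \neg C) \to (D_n \rhd D_{n+1} \to \mathsf V_n),
\]
and then necessitate and apply $\finprin{L_1}{\la\rangle}$ as above. To prove the implication, assume both antecedents; by $\finprin{J_2}{\svo\cdot k}{\sf a}$ applied to $D_n \rhd D_{n+1}$ and $D_{n+1} \rhd^{\svo, k} \Diamond \neg C$ I obtain $D_n \rhd^{\svo, k} \Diamond \neg C$, and the induction hypothesis instantiated at the sequence $\svo, k$ then yields $\mathsf V_n$.

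The main subtlety is bookkeeping around the fresh variable $k$: one must make sure $k$ does not occur in $\svo$ (so that the side conditions on $\finprin{P}{k}$ are satisfied) and, crucially, one must instantiate the induction hypothesis at the extended sequence $\svo, k$ rather than at $\svo$ itself. Note that the auxiliary assumptions $\varDelta$ licensed by $\finprin{P}{k}$ (i.e.\ $\finboxext{k}$ and $\finintext{k}$) are not needed for either case of this particular proof.
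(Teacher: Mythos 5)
Your proof is correct and follows essentially the same route as the paper: induction on $n$, with the base case being (an inline re-derivation of) the generalised \principle{P_0} rule of Lemma~\ref{gen_p0}, and the inductive step applying \finprin{P}{k} with a fresh $k$, composing $D_n \rhd D_{n+1}$ with $D_{n+1}\rhd^{\svo,k}\Diamond\neg C$ via \finprin{J_2}{\svo,k}{\sf a}, and invoking the (necessitated) induction hypothesis at the extended sequence $\svo,k$. The bookkeeping points you flag (freshness of $k$, instantiating the IH at $\svo,k$, not needing $\varDelta$) match the paper's proof.
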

\begin{proof}
    Let $n = 1$ and $\svo$ be arbitrary. We want to prove that $\atl \vdash D_1 \rhd^\svo \Diamond \neg C \to \Box (D_1 \rhd \neg C)$. 
    This is an instance of the generalised \principle{P_0} schema as we stated in Lemma \ref{gen_p0}.
    
    Let us now prove the claim for $n + 1$. Thus, we fix an arbitrary sequence of interpretations $\svo$. We are to show that 
    \[
        \atl \vdash D_{n + 1} \rhd^\svo \Diamond \neg C \to \Box(D_{n} \rhd D_{n + 1} \to \mathsf V_n).
    \] 
    Thus, reasoning in \atl, we assume $D_{n + 1} \rhd^\svo \Diamond \neg C$.
    We now wish to apply the rule $\finprin{P}{k}$ with this formula, where $k$ is an arbitrary variable not used in its left or right side or $\svo$.
    So, assume $\Box(D_{n + 1} \rhd^{\svo, 
 k   } \Diamond \neg C)$. 
    Reason under a box. 
    Assume $D_n \rhd D_{n + 1}$. 
    Now $D_n \rhd D_{n + 1}$ and $D_{n + 1} \rhd^{\svo, 
    k} \Diamond \neg C$ imply $D_n \rhd^{\svo, 
    k} \Diamond \neg C$.
    By the necessitated induction hypothesis, this implies $\mathsf V_n$. Thus, we find $\Box(D_n \rhd D_{n + 1} \to \mathsf V_n)$, as required.
\end{proof}

\begin{lemma}
    For all interpretation variables $k$ we have the following:
    \label{lemma_broad2}
    \[
        \atl \vdash \mathsf U_{ n } \wedge (D_{ n } \rhd A) \wedge (A \rhd^k B) \rhd^k B \wedge \Box C.
    \]
\end{lemma}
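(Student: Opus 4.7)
The plan is to derive the claim directly, by reducing it to Lemma~\ref{lemma_broad1}; no further induction on $n$ is needed. Since the conclusion is a $\rhd^k$-statement, I first aim for the corresponding implication
\[
    \uu_n \wedge (D_n \rhd A) \wedge (A \rhd^k B) \to \Diamond^k(B \wedge \Box C),
\]
and then promote it to the $\rhd^k$-assertion by necessitation, $\prin{J_1}$, and $\finprin{J_5}{\la \ra,k}$.

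To establish the implication, I would argue by contraposition. Assume $\Box^k(B \to \Diamond \neg C)$ together with $(D_n \rhd A)$ and $(A \rhd^k B)$. From $(D_n \rhd A) \wedge (A \rhd^k B)$, the axiom $\finprin{J_2}{k}{\sf a}$ gives $D_n \rhd^k B$. Combining this with $\Box^k(B \to \Diamond \neg C)$ via $\finprin{J_2}{k}{\sf b}$ yields $D_n \rhd^k \Diamond \neg C$. Now Lemma~\ref{lemma_broad1}, instantiated with $\svo$ set to the one-element sequence $k$, delivers $\mathsf{V}_n$, i.e.\ $\neg\, \uu_n$. Thus the four assumptions are jointly inconsistent; after contraposing and noting that $\neg\, \Box^k(B \to \Diamond \neg C)$ is propositionally equivalent to $\Diamond^k(B \wedge \Box C)$, we obtain the desired implication.

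Necessitating this implication and applying $\prin{J_1}$ produces $\uu_n \wedge (D_n \rhd A) \wedge (A \rhd^k B) \rhd \Diamond^k(B \wedge \Box C)$, and a final application of $\finprin{J_5}{\la \ra,k}$ strips the $\Diamond^k$ from the right-hand side to yield the claimed $\rhd^k (B \wedge \Box C)$.

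The argument has no serious obstacle, since all the heavy lifting has already been done in Lemma~\ref{lemma_broad1}. The main point one has to watch is label bookkeeping: $\finprin{J_2}{k}{\sf b}$ requires the $\Box$ and the resulting $\rhd$ to share exactly the label $k$, and Lemma~\ref{lemma_broad1} has to be invoked with its $\svo$ taken to be the one-element sequence $k$ so that its conclusion $\mathsf{V}_n$ lines up with $\neg\, \uu_n$ as defined by the series $\uu_{n+1} := \Diamond((D_n \rhd D_{n+1}) \wedge \uu_n)$.
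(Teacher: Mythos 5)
Your proposal is correct and follows essentially the same route as the paper: reduce the $\rhd^k$-claim to the implication $\uu_n \wedge (D_n \rhd A) \wedge (A \rhd^k B) \to \Diamond^k(B \wedge \Box C)$ via necessitation, $\prin{J_1}$ and $\finprin{J_5}{\la \ra,k}$, contrapose it to an implication with conclusion $\mathsf V_n$, and discharge that by deriving $D_n \rhd^k \Diamond \neg C$ and invoking Lemma~\ref{lemma_broad1} with $\svo = \la k \ra$. You merely spell out the transitivity steps ($\finprin{J_2}{k}{\sf a}$ then $\finprin{J_2}{k}{\sf b}$) that the paper leaves implicit.
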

\begin{proof}
    It is clear that the claim to be proved follows by necessitation, $\prin{J_1}$, and $\finprin{J_5}{\la \ra,k}$ from the following:
    \[
        \atl \vdash \mathsf U_{ n } \wedge (D_{ n } \rhd A) \wedge (A \rhd^k B) \to \Diamond^k (B \wedge \Box C).
    \]
    This formula is equivalent to \[
        (D_{ n } \rhd A) \wedge (A \rhd^k B) \wedge \Box^k (B \to \Diamond \neg C) \to \mathsf V_{ n }.
    \]
    Assuming the left-hand side, we get $D_n \rhd^k \Diamond \neg C$, whence $V_n$ by Lemma \ref{lemma_broad1}.
\end{proof}

\begin{theorem}
    \label{soundness_broad}
    For all $n \in \omega$, $\atl \vdash \mathsf R^n$.
\end{theorem}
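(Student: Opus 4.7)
The plan is to split into the base case $n=0$ and the case $n \geq 1$. For $n=0$, the principle $\mathsf{R}^0$ coincides verbatim with the principle $\mathsf{R}$ already shown to be \atl-derivable in Section~5.3, so nothing new is required.

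For $n \geq 1$, I would invoke Lemma~\ref{lemma_broad2} together with a single application of the rule $\finprin{P}{k}$. Concretely, fix $n \geq 1$ and assume $A \rhd B$. My aim is to derive $(\mathsf{U}_n \wedge (D_n \rhd A)) \rhd B \wedge \Box C$. Using $\finprin{P}{k}$ with a fresh interpretation variable $k$, it suffices to establish this conclusion under the assumption $\Box(A \rhd^k B)$, together with the licences \finboxext{k} and \finintext{k} collected in $\varDelta$. So I reason under $\Box(A \rhd^k B)$.

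The key step is then very short. From $\Box(A \rhd^k B)$ and $\prin{J_1}$ I obtain
\[
    (\mathsf{U}_n \wedge (D_n \rhd A)) \;\rhd\; \bigl(\mathsf{U}_n \wedge (D_n \rhd A) \wedge (A \rhd^k B)\bigr).
\]
Lemma~\ref{lemma_broad2} supplies
\[
    \bigl(\mathsf{U}_n \wedge (D_n \rhd A) \wedge (A \rhd^k B)\bigr) \;\rhd^k\; B \wedge \Box C,
\]
and an application of $\finprin{J_2}{k}{\sf a}$ chains these into $(\mathsf{U}_n \wedge (D_n \rhd A)) \rhd^k B \wedge \Box C$. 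Finally, \finintext{k} drops the label $k$ to yield $(\mathsf{U}_n \wedge (D_n \rhd A)) \rhd B \wedge \Box C$, discharging the assumption and completing the derivation of $\mathsf{R}^n$.

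I do not anticipate a genuine obstacle: all the heavy lifting (tracking the nested $\mathsf{U}_n$-structure and using the generalised $\mathsf{P_0}$ through the sequence of interpretations) has already been absorbed into Lemmas~\ref{lemma_broad1} and \ref{lemma_broad2}. The only thing to be mildly careful about is the bookkeeping of the $\finprin{P}{k}$ rule, namely that the variable $k$ is fresh relative to $A, B, C, D_n$ and to the formulas in $\varDelta$, and that the final elimination of the $k$ label via \finintext{k} occurs after the application of $\finprin{J_2}{k}{\sf a}$.
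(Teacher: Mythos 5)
Your proposal is correct and is essentially identical to the paper's own proof: the paper likewise handles $n=0$ by appeal to the earlier soundness of $\principle{R}$, and for $n>0$ assumes $A\rhd B$, applies $\finprin{P}{k}$ to get $\Box(A\rhd^k B)$, derives $\mathsf U_n\wedge(D_n\rhd A)\rhd \mathsf U_n\wedge(D_n\rhd A)\wedge(A\rhd^k B)$, chains with Lemma~\ref{lemma_broad2} via $\prin{J_2}$, and finishes with \finintext{k}. Your extra remarks on the freshness of $k$ and the placement of \finintext{k} are accurate bookkeeping, nothing more is needed.
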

\begin{proof}
    Case $n = 0$ is clear.
    Let $n > 0$ be arbitrary and let us prove $\mathsf R^n$. 
    Reason in \atl.
    Assume $A \rhd B$. 
    We wish to apply the rule $\finprin{P}{k}$ here.
    So, assume $\Box(A \rhd^k B)$. We have: \[ 
        \mathsf U_{ n } \wedge (D_{ n } \rhd A) \rhd
        \mathsf U_{ n } \wedge (D_{ n } \rhd A) \wedge (A \rhd^k B).
    \]
    Lemma \ref{lemma_broad2} and the rule \prin{J_2} imply \[
        \mathsf U_{ n } \wedge (D_{ n } \rhd A) \rhd^k B \wedge \Box C,
    \]
    and by $\finintext{k}$,
    \[
        \mathsf U_{ n } \wedge (D_{ n } \rhd A) \rhd B \wedge \Box C.
    \]
    So we are done.
\end{proof}

\ignore{\subsubsection{A proof using $\rhd^0$ ($\mathsf S^1_2$)}\label{blaaskaaksmurf}

Here we present an alternative proof which avoids\albnote{Since Lemma \ref{gen_p0_s12}
is false, I guess we must remove this subsection.}
    iterated approximations,
    and instead uses the idea exploited in 
    Lemma \ref{gen_p_s12} and Lemma \ref{gen_p0_s12}.
The proof is essentially the same, but slightly shorter.
We note here that we also wrote an alternative proof for the series
    \principle{R_n} but we omit it in this paper it as the proofs 
    are very similar in that case too.

\begin{lemma}
    \label{lemma_broad1_alt}
    For all $n \in \omega \setminus \{0\} $: \[
        \atl \vdash D_n \rhd^0 \Diamond \neg C \to \mathsf V_{n}.
    \] 
\end{lemma}
\begin{proof}
    Let $n = 1$. We are to prove $\atl \vdash D_1 \rhd^0 \Diamond \neg C \to \Box (D_1 \rhd \neg C)$. 
    This is an instance of the generalised \principle{P_0} schema (Lemma \ref{gen_p0_s12}).\luknote{As mentioned near 6.2, this isn't exactly true (that this formulas is an instance of ...), but we can restructure 6.1 and 6.2. Similarly when we refer to 6.1.}
    
    Let us now prove the claim for $n + 1$. We are to show that \[
        \atl \vdash D_{n + 1} \rhd^0 \Diamond \neg C \to \Box(D_{n} \rhd D_{n + 1} \to \mathsf V_n).
    \] 
    Assume $D_{n + 1} \rhd^0 \Diamond \neg C$.
    By Lemma \ref{gen_p_s12}, we have $\Box^0(D_{n + 1} \rhd^0 \Diamond \neg C)$. 
    Reason under a box. 
    Assume $D_n \rhd D_{n + 1}$. 
    Now $D_n \rhd D_{n + 1}$ and $D_{n + 1} \rhd^0 \Diamond \neg C$ imply $D_n \rhd^0 \Diamond \neg C$.
    By the induction hypothesis, this implies $\mathsf V_n$, as required.
\end{proof}

\begin{lemma}
    \label{lemma_broad2_alt}
    Given an interpretation variable $k$,\luknote{I think it might be a bit unusual that we refer to the interpretation variables here, without previously mentioning variables are a part of the alternative system. We can remove them, but we then might also have to remove reference to the rules from the sequences version of AtL (J5k). The ending is a bit unclear too. I suggest to either (1) omit the proof like in 9.6 and say it's analogous, or (2) remove int. variables, add $^0$, and say the J* rules etc. are analogous.}
    \[
        \atl \vdash \mathsf U_{ n } \wedge (D_{ n } \rhd A) \wedge (A \rhd^k B) \rhd^k B \wedge \Box C.
    \]
\end{lemma}
\begin{proof}
    It is clear that the claim to be proved follows by necessitation, $\prin{J_1}$, and $\finprin{J_5}{\la \ra,k}$ from the following:
    \[
        \atl \vdash \mathsf U_{ n } \wedge (D_{ n } \rhd A) \wedge (A \rhd^k B) \to \Diamond^k (B \wedge \Box C).
    \]
    This formula is equivalent to \[
        (D_{ n } \rhd A) \wedge (A \rhd^k B) \wedge \Box^k (B \to \Diamond \neg C) \to \mathsf V_{ n }.
    \]
    On the left-hand side we get $D_n \rhd^k \Diamond \neg C$. 
    In particular, $D_n \rhd^0 \Diamond \neg C$. 
    Now $V_n$ follows from Lemma \ref{lemma_broad1_alt}.
\end{proof}

\begin{theorem}
    For all $n \in \omega$, $\atl \vdash \mathsf R^n$.
\end{theorem}
\begin{proof}
    The proof is exactly the same as the proof of Theorem \ref{soundness_broad}.
\end{proof}

\ignore{Finally, we state the generalised frame condition for the series \principle{R^n},
    obtained in joint work with Jan Mas Rovira.
    
\begin{theorem}
\label{org9ce8c0d}
Let $n \in \omega$ be arbitrary.
We have $ \mathfrak{F}\Vdash {\sf R}^n$ if and only if for all
$w$, $x_0$, $\dots$, $x_{n-1}$, $y$, $z$, $\mathbb{A}$, $\mathbb{B}$, $\mathbb{C}$, $\mathbb{D}_0$, $\dots$, $\mathbb{D}_{n-1}$
we have the following:
\begin{flalign*}
&wRx_{n-1}R\dots Rx_0RyRz, \\
& (\forall u \in R[w] \cap \mathbb{A})(\exists V) uS_wV\subseteq \mathbb{B}, \\
& (\forall u \in R[x_{n-1}] \cap \mathbb{D}_{n-1}) (\exists V) uS_{x_{n-1}}V\subseteq\mathbb{A}, \\
& (\forall i\in \{1,\dots,n-2\})(\forall u \in R[x_i] \cap \mathbb{D}_i)(\exists V) uS_{x_i}V\subseteq\mathbb{D}_{i+1}, \\
& (\forall V \in S_y[z]) V\cap\mathbb{C}\neq 0,      \\
& z\in \mathbb{D}_0 \\
\Rightarrow\ & (\exists V\subseteq\mathbb{B})(x_{n-1}S_wV \ \& \ R[V]\subseteq\mathbb{C}).
\end{flalign*}
\end{theorem} 
\begin{proof}
    Please see \cite{MasRovira:2020:MastersThesis} for the proof
    (including a formalisation in Agda).
\end{proof}   
}
}
\bibliographystyle{alpha}
\bibliography{levref}

\end{document}